\titleformat{\subsection}{\it}{\thesubsection.\enspace}{1pt}{}
\newtheorem{theo}{Theorem}[section]
\newtheorem{lemm}[theo]{Lemma}
\newtheorem{defi}[theo]{Definition}
\newtheorem{prop}[theo]{Proposition}
\newtheorem{rema}[theo]{Remark}
\numberwithin{equation}{section}
\begin{document}
\title{Sharp non-uniqueness for the Boussinesq equation with fractional dissipation
\hspace{-4mm}
}

\author{Zipeng $\mbox{Chen}^1$ \footnote{Email: chenzp26@mail2.sysu.edu.cn}\quad
	 and\quad
	Zhaoyang $\mbox{Yin}^{1}$\footnote{E-mail: mcsyzy@mail.sysu.edu.cn}\\
    $^1\mbox{School}$ of Science,\\ Shenzhen Campus of Sun Yat-sen University, Shenzhen 518107, China}
        
\date{}
\maketitle
\hrule

\begin{abstract}
This paper focuses on the $d$-dimensional ($d\geq2$) Boussinesq equation with fractional dissipation $(-\Delta)^{\alpha}$ on the torus. We show that the uniqueness property breaks down within the function space  $L^p_tL^\infty_x$ for any $p<\frac{2\alpha}{2\alpha-1}$ when $1\leq\alpha<\frac{d+1}{2}$ and the function space $L^\frac{2\alpha}{2\alpha-1}_tL^q_x$ for any $q<\infty$ when $1<\alpha<\frac{d+1}{2}$. Moreover, the weak solutions we construct are smooth outside a set of singular times with Hausdorff dimension arbitrarily small. This result is sharp, as weak-strong uniqueness holds in the space $L^{\frac{2\alpha}{2\alpha-1}}_TL^\infty_x$. 
\end{abstract}
\noindent {\sl Keywords:}  Boussinesq equation, Convex integration, Non-uniqueness

\vskip 0.2cm

\noindent {\sl AMS Subject Classification:} 35Q30, 76D03  \

\vspace*{10pt}


\section{Introduction }

  In this paper, we consider the following $d$-dimensional ($d\geq2$) Boussinesq equation:
\begin{equation}\label{e:boussinesq equation}
\begin{cases}
\partial_tv+ \text{div}(v\otimes v)+\nabla p+(-\Delta)^{\alpha} v=\theta e_d, \quad\quad \\
\text{div}\,v=0,\\
\partial_t\theta+\text{div}(v\theta)+(-\Delta)^{\alpha} \theta=0, \quad\forall(t,x)\in [0,T]\times\mathbb{T}^d,
\end{cases}
\end{equation}
where $T>0$, $\mathbb{T}^d$ is the $d$-dimensional torus and $e_d=(0,0,\cdots,1)$. Here, $v,p,\theta$ represent velocity, pressure, and temperature, respectively. The Boussinesq equation serves to model large-scale atmospheric and oceanic circulations underlying cold front formations and jet stream behaviors (see \cite{Majda}). Moreover, the Boussinesq equation is indispensable for investigating the Rayleigh–Bénard convection, which is one of the most widely studied convection phenomena (see \cite{convection2,convection1,convection3}). Let us first present the notion of weak solutions in the distributional sense to (\ref{e:boussinesq equation}).

\begin{defi}\label{def of ws}
    Let $(u_0,\theta_0)\in L^2(\mathbb{T}^d)$. A pair $(u,\theta)\in L_{t,x}^2$ is a weak solution of (\ref{e:boussinesq equation}) with initial data $(u_0,\theta_0)$ if the following hold: \begin{itemize}
        \item For a.e. $t\in[0,T]$, $u(t)$ is weakly divergence-free.
        \item For any $\phi\in C^\infty(\mathbb{R}\times\mathbb{T}^d;\mathbb{R}^d)$ with $\text{div}\,\phi=0$ and $\phi(t)=0$ if $t\geq T$,
        \begin{gather*}
            \int_{\mathbb{T}^d}u_0(x)\cdot\phi(0,x)\,dx=-\int^T_0\int_{\mathbb{T}^d}u\cdot(\partial_t\phi-(-\Delta)^{\alpha}\phi+u\cdot\nabla\phi)+\theta e_d\cdot\phi\, dxdt.
        \end{gather*}
        \item For any $\psi\in C^\infty(\mathbb{R}\times\mathbb{T}^d;\mathbb{R})$ with $\psi(t)=0$ if $t\geq T$, 
        \begin{gather*}
            \int_{\mathbb{T}^d}\theta_0(x)\cdot\psi(0,x)\,dx=-\int^T_0\int_{\mathbb{T}^d}\theta\cdot(\partial_t\psi-(-\Delta)^{\alpha}\psi+u\cdot\nabla\psi)\,dxdt.
        \end{gather*}
    \end{itemize}
    Moreover, we call the weak solution $(u,\theta)$ is generalized Leray-Hopf weak solution if $(u,\theta)$ belongs to $L^\infty_TL^2_x\cap L^2_T\dot{H}^\alpha_x$ and satisfies
    \begin{gather}
        \|u(t)\|^2_{L_x^2}+2\int_0^t\|(-\Delta)^{\frac{\alpha}{2}} u(s)\|^2_{L^2_x}\,ds\leq \|u_0\|^2_{L^2_x}+2\int^t_0\int_{\mathbb{T}^d}\theta e_d\cdot u\,dxds,\label{能量不等式1}\\
        \|\theta(t)\|^2_{L_x^2}+2\int_0^t\|(-\Delta)^{\frac{\alpha}{2}} \theta(s)\|^2_{L^2_x}\,ds\leq \|\theta_0\|^2_{L^2_x}.\label{能量不等式2}
    \end{gather}
\end{defi}

COnsider the famous Navier-Stokes equation:
\begin{equation*}
\begin{cases}
\partial_tv+ \text{div}(v\otimes v)+\nabla p-\nu\Delta v=0, \\
\text{div}\,v=0,\\
v(0)=v_0,
\end{cases}
\end{equation*}
Leray \cite{leray} first proved that there exists a weak solution in $L^\infty(\mathbb{R}^+;L^2(\mathbb{R}^d))\cap L^2(\mathbb{R}^+;\dot{H}^1(\mathbb{R}^d))$ for any $L^2$ solenoidal initial data, which satisfies the energy inequality
\begin{gather*}
    \|v(t)\|^2_{L^2}+2\nu\int^t_0\|\nabla v(s)\|^2_{L^2}ds\leq\|v_0\|^2_{L^2},\quad\forall t\geq0.
\end{gather*}
In the case of smooth bounded domain with Dirichlet boundary conditions, Hopf \cite{hopf} derived an analogous result. This category of weak solutions is nowadays known as Leray-Hopf weak solutions. In the present time, the uniqueness problem for Leray–Hopf weak solutions of the Navier–Stokes equations remains an open question. However, significant breakthroughs have been made in the uniqueness, regularity, and global existence of solutions to NS equation within broader function spaces.
The most relevant result to this article is the Ladyzhenskaya-Prodi-Serrin criteria \cite{kozono,lady,Prodi,serrin}. It states that if a Leray-Hopf weak solution belongs to the critical or subcritical regime:
\begin{gather*}
    L_t^pL_x^q\quad \text{with}\,\, \frac{2}{p}+\frac{d}{q}\leq 1,
\end{gather*}
then uniqueness is guaranteed for Leray–Hopf solutions with the same initial data (more precisely, we need $C_tL^q$ rather than $L_t^\infty L_x^q$ when $p=\infty$). Moreover, belonging to such functional spaces also implies the regularity of Leray–Hopf solutions \cite{w-s2,w-s1,w-s3}. For the 3D hyperdissipative NS equation, \cite{GLPS} proves that uniqueness holds in the critical or subcritical regime
\begin{gather*}
    L_t^pL_x^q\quad \text{with}\,\, \frac{2\alpha}{p}+\frac{3}{q}\leq 2\alpha-1.
\end{gather*}

 It's known that weak solution of NS equation will stay unique in the critical or subcritical regime due to LPS criteria. But in the supercritical, uniqueness may break down and there have been many non-uniqueness results. Buckmaster-Vicol \cite{ns有限能量不唯一} first proved the non-uniqueness of finite energy weak solutions to 3D NS equation, which is based on the convex integration scheme and the $L^2_x$-based intermittent spatial building block. Using the same method, Luo-Titi \cite{ns有限能量不唯一低于lion指标} extended the above result to 3D hyperviscous NS equation with fractional hyperviscosity exponent less than the Lion exponent $\frac{5}{4}$. Luo-Pu \cite{luopu} showed the non-uniqueness of $C_t^0L^2_x$ weak solutions to 2D hypoviscous NS equation.
 In \cite{serrin准则luo}, Cheskidov-Luo used the temporal intermittency method and proved the sharp non-uniqueness of NS equation on the class $L^p_tL^\infty_x$ when $p<2$. This is the first sharp non-uniqueness counterexample near the endpoint of LPS criteria. In \cite{MR4610908}, Cheskidov-Luo established sharp non-uniqueness on the class $L^{\infty}_tL^q_x$ when $q<2$ for the 2D NS equation, which is another endpoint of LPS citeria. For the 3D hyperdissipative NS equation with fractional hyperviscosity exponent beyond Lion exponent, Li-Qu-Zeng-Zhang \cite{qupeng} showed the sharp non-uniqueness in the supercritical space in view of the generalized LPS criteria.

In these papers of proving non-uniqueness results as well this one, the convex integration method stands as the most crucial tool. De Lellis-Sz\'{e}kelyhidi \cite{introductionconvex1,introductionconvex2} first introduced the convex integration scheme into the incompressible Euler equation. The convex integration scheme has been fully developed in recent years, for instance \citep{1,2,3,4,5,6,7,8,9}. The resolution of Onsager's conjecture marks a major milestone—specifically, Isett \cite{Isett3} resolved the conjecture for the 3D case (see also \cite{B1}), and Giri-Radu \cite{2donsager} achieved the same for the 2D case.

Driven by research on Onsager’s conjecture regarding the Euler equation and the non-uniqueness of weak solutions to the Navier-Stokes equation, a natural question arises: do analogous results hold for the Boussinesq system? For the 3D inviscid Boussinesq equation without thermal diffusion, Tao-Zhang \cite{T3} showed the existence of $C^{\beta}$ periodic weak solutions with the prescribed kinetic energy where $\beta\in(0,\frac{1}{5})$.  Miao-Nie-Ye \cite{Ye} proved the H\"{o}lder threshold regularity exponent for $L^p$-norm conservation of temperature of this system is $\frac{1}{3}$. The existence of periodic weak solutions belonging to $C^{\beta}$ with $\beta\in(0,\frac{1}{3})$ and possessing the prescribed kinetic energy was demonstrated by Xu-Tan \cite{xusaigu}. For the 2D Boussinesq equation with thermal diffusion, Luo-Tao-zhang \cite{L2,L1} constructed the H\"{o}lder continuous dissipative weak solutions with the prescribed kinetic energy and showed the non-uniqueness of finite energy weak solutions.

\section{Main result and outline of the paper}
The main theorems of this paper is formulated in Theorem \ref{maintheo} and Theorem \ref{maintheo2}, which shows non-uniqueness of weak solutions for the Boussinesq equation (\ref{e:boussinesq equation}).

\begin{theo}\label{maintheo}
    Let $d\geq2$ be the dimension and $1\leq\alpha<\frac{d+1}{2}$. For any $1\leq p<\frac{2\alpha}{2\alpha-1}$, if $(u,\theta)\in L^p_TL^\infty_x(\mathbb{T}^d)$ is a weak solution of (\ref{e:boussinesq equation}) and has at least one interval of regularity, then there exist infinitely many non-generalized-Leray-Hopf weak solutions in $L^p_TL^\infty_x(\mathbb{T}^d)$ of (\ref{e:boussinesq equation}) with the same initial data.
\end{theo}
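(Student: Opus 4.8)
The plan is to run a convex integration scheme adapted to the dissipative Boussinesq system, following the temporal-intermittency strategy of Cheskidov–Luo for Navier–Stokes. Starting from the given solution $(u,\theta)$ that is smooth on some interval of regularity $[t_1,t_2]$, I would first set up a relaxed system — the Boussinesq–Reynolds system — with stress errors $\mathring R^u$ (a symmetric traceless matrix) in the momentum equation and $R^\theta$ (a vector field) in the temperature equation. The goal is to build a sequence $(u_q,\theta_q,\mathring R^u_q,R^\theta_q)$ with $(\mathring R^u_q,R^\theta_q)\to 0$, converging to a weak solution in $L^p_TL^\infty_x$ that agrees with $(u,\theta)$ on a sub-interval of $[t_1,t_2]$ but differs globally, hence cannot be generalized Leray–Hopf (as that would force weak-strong uniqueness with the smooth solution, contradicting the construction). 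Infinitely many solutions then come from a one-parameter family of perturbations. Because we want smoothness outside a small-Hausdorff-dimension set of singular times, the construction should be localized in time around a Cantor-like set, with the perturbations supported on shrinking time neighborhoods so the limit is $C^\infty$ off that set.

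The heart of the iteration is the perturbation $w_{q+1} = w^{(p)}_{q+1} + w^{(c)}_{q+1} + w^{(t)}_{q+1}$ and a temperature perturbation $d_{q+1}$, built from intermittent building blocks modulated by sharp temporal cutoffs. I would use Mikado-type or intermittent jet/box flows $W_{\xi}$ concentrated on small spatial scales $\lambda_{q+1}^{-1}$ and thin tubes of width $\sigma_{q+1}^{-1}$, together with an independent temporal oscillation at frequency $\mu_{q+1}$; the temporal concentration (a function $g$ with $\|g\|_{L^1}\sim 1$ but $\|g\|_{L^2}$ large) is what pushes the admissible $L^p_tL^\infty_x$ range down to $p<\frac{2\alpha}{2\alpha-1}$, exactly balancing the hyperdissipation $(-\Delta)^\alpha$ at its natural parabolic scaling. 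The principal part $w^{(p)}_{q+1}$ is chosen so that its low-frequency self-interaction $\sum_\xi a_\xi^2(\mathring R^u_q) \,\mathbb P_{\neq 0}(W_\xi\otimes W_\xi)$ cancels $\mathring R^u_q$ via the standard geometric lemma; the corrector $w^{(c)}_{q+1}$ restores divergence-free; and $w^{(t)}_{q+1}$ (the temporal corrector) absorbs the large time derivative $\partial_t(\text{slow part})$ produced by the sharp temporal cutoff. The temperature perturbation $d_{q+1}$ is designed analogously so that $\mathbb P_{\neq 0}(d_{q+1} W_\xi)$ cancels $R^\theta_q$ — here the coupling is simpler because the temperature equation is linear in $\theta$ given $u$, and the buoyancy term $\theta e_d$ feeds only a lower-order contribution into the new momentum stress.

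Then I would estimate the new errors. The new momentum stress splits into a linear (dissipation) error $(-\Delta)^\alpha w_{q+1}$, a transport/Nash error, an oscillation error, a corrector error, and a "Boussinesq" error from $d_{q+1}e_d$; each is inverted by $\mathcal R$ (the symmetric-divergence inverse) or $\mathcal R\,\mathrm{div}^{-1}$ and bounded in a suitable $L^1_x$ or $L^p_t L^1_x$ norm by choosing $\lambda_{q+1},\sigma_{q+1},\mu_{q+1}$ as appropriate powers of $\lambda_{q+1}$ with a small gluing parameter; I expect the parameter inequalities to close precisely when $1\le\alpha<\frac{d+1}{2}$, the upper bound $\frac{d+1}{2}$ coming from the dimension-dependent intermittency gain of the building blocks (more spatial dimensions allow thinner, more intermittent flows, tolerating stronger dissipation). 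The temperature stress $R^\theta_{q+1}$ is handled similarly. Finally, summing the telescoping series gives $u=u_0+\sum w_{q+1}\in L^p_TL^\infty_x$, likewise $\theta$, with the prescribed regularity outside the singular set, and distinctness from the original solution is arranged by a nonzero perturbation on the first step. The main obstacle I anticipate is the simultaneous bookkeeping of the two coupled stresses under a \emph{single} choice of frequency parameters — in particular making the buoyancy coupling $d_{q+1}e_d$ and the temperature transport error compatible with the same $\mu_{q+1},\sigma_{q+1}$ that work for the velocity — together with keeping every perturbation sharply localized in time so that the Hausdorff dimension of the singular set can be made arbitrarily small without destroying the stress estimates.
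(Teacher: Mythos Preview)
Your convex integration scheme is essentially the paper's: a Boussinesq--Reynolds relaxation with a symmetric stress in the momentum equation and a vector stress in the temperature equation, concentrated Mikado flows modulated by a sharp temporal profile $g_l$ for intermittency, a divergence corrector and a temporal corrector, with the parameter window closing exactly for $1\le\alpha<\tfrac{d+1}{2}$. The paper does separate out an explicit \emph{gluing} step (its Proposition~\ref{prop of gluing}) before each convex integration pass, solving local Boussinesq problems and patching with cutoffs so that the stress is supported on $\bar\tau^{-\epsilon}$ intervals of length $\sim\bar\tau$; this is what drives the Hausdorff-dimension bound, and your allusion to a ``Cantor-like set'' is the right picture but would need this mechanism spelled out.

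Where you genuinely diverge from the paper is the deduction of \emph{non-generalized-Leray--Hopf}. You argue: the constructed solution agrees with the given smooth $(u,\theta)$ on a subinterval, so if it were Leray--Hopf, weak--strong uniqueness would force global coincidence, a contradiction. This needs more care than you indicate: weak--strong uniqueness (the paper's Lemma~\ref{Lemmaofzheng2}) requires the strong solution to lie in $L^{2\alpha/(2\alpha-1)}_TL^\infty_x$ on the whole interval in question and both solutions to be Leray--Hopf there, whereas the hypothesis only gives $(u,\theta)\in L^p_TL^\infty_x$ with $p<\tfrac{2\alpha}{2\alpha-1}$ and says nothing about $(u,\theta)$ being Leray--Hopf. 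One can try to localize to the interval of regularity, but then you must also arrange that your perturbation is nontrivial \emph{inside} that interval while still matching the data at its left endpoint, and check that the energy inequalities restrict correctly. The paper sidesteps all of this: it proves an approximation proposition (its Proposition~\ref{mainProp}) for \emph{arbitrary} smooth divergence-free/mean-free pairs $(v,\rho)$, then applies it not to $(u,\theta)$ but to $(u,\theta+m(1-\chi)f)$ for each $m\in\mathbb N$. The resulting weak solutions $(\tilde u_m,\tilde\theta_m)$ share the initial data of $(u,\theta)$, are pairwise distinct because $\|\tilde\theta_m-\tilde\theta_{m'}\|_{L^p_TL^2_x}\ge|m-m'|-\tfrac12$, and for large $m$ violate the temperature energy inequality $\|\theta(t)\|_{L^2_x}\le\|\theta_0\|_{L^2_x}$ outright, hence cannot be generalized Leray--Hopf. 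This energy-inflation argument is both simpler and more robust than your uniqueness-based route.
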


\begin{theo}\label{maintheo2}
    Let $d\geq2$ be the dimension and $1<\alpha<\frac{d+1}{2}$. For any $q<\infty$, if $(u,\theta)\in L^\frac{2\alpha}{2\alpha-1}_tL^q_x$ is a weak solution of (\ref{e:boussinesq equation}) and has at least one interval of regularity, then there exist infinitely many non-generalized-Leray-Hopf weak solutions in $L^\frac{2\alpha}{2\alpha-1}_tL^q_x$ of (\ref{e:boussinesq equation}) with the same initial data.
\end{theo}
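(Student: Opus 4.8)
The plan is to prove both theorems via a single convex integration scheme built on temporal intermittency, adapting the approach of Cheskidov--Luo \cite{serrin准则luo,MR4610908} to the Boussinesq system with fractional dissipation. First I would reduce the statements to a one-parameter family of constructions: given a weak solution $(u,\theta)$ with an interval of regularity $[T_0,T_1]$, it suffices to produce, on that interval, infinitely many weak solutions of \eqref{e:boussinesq equation} that agree with $(u,\theta)$ near $T_0$ but differ from it, with each lying in the prescribed space and failing the generalized Leray--Hopf inequalities \eqref{能量不等式1}--\eqref{能量不等式2}; gluing back to the original solution outside $[T_0,T_1]$ then finishes the argument. To set up the iteration I would pass to the Boussinesq--Reynolds system: at stage $q$ one has $(u_q,\theta_q,\mathring R_q,R^\theta_q)$ solving the Boussinesq equations with error terms $\mathring R_q$ (a symmetric traceless matrix) and $R^\theta_q$ (a vector), together with inductive estimates on $\|u_q\|_{L^p_tL^\infty_x}$, $\|\theta_q\|_{L^{2\alpha/(2\alpha-1)}_tL^q_x}$ (or the appropriate norms), $\|u_q\|_{C^1_{t,x}}$, $\|\mathring R_q\|_{L^1}$ and $\|R^\theta_q\|_{L^1}$, all controlled by a superexponentially growing frequency parameter $\lambda_q$ and a decaying amplitude parameter $\delta_{q+1}$.

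The core of the iteration is the perturbation step. I would take the velocity perturbation $w_{q+1}=w^{(p)}_{q+1}+w^{(c)}_{q+1}+w^{(t)}_{q+1}$ built from intermittent Mikado-type (or box) flows $W_{k}$ concentrated in both space and time: spatially intermittent building blocks carried by disjoint geometric cones indexed by $k$ so that $\sum_k a_k(\mathring R_q) W_k\otimes W_k \approx \rho\,\mathrm{Id}-\mathring R_q$ via the standard geometric lemma, modulated by sharp temporal cutoffs $g_k(t)$ (concentrated on a set of small measure, rescaled so that $\int g_k^2\,dt\approx1$), which is precisely the mechanism that keeps the $L^p_tL^\infty_x$ and $L^{2\alpha/(2\alpha-1)}_tL^q_x$ norms small while still cancelling the $O(1)$-in-$L^1$ Reynolds stress. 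The temperature perturbation $\vartheta_{q+1}$ is constructed analogously to cancel $R^\theta_q$ using the transport-type identity $\mathrm{div}(\vartheta_{q+1} u_{q+1})$ absorbing the low-frequency part, with its own temporally intermittent scalar building blocks; the coupling term $\theta e_d$ feeds into $\mathring R_q$ at the next stage and is handled because the temperature error is also driven to zero. The new Reynolds errors are then defined by inverting the divergence (using the operator $\mathcal R$ for the matrix error and $\mathcal R^\theta:=\nabla\Delta^{-1}$ or the antidivergence for the vector error) applied to: the oscillation error, the transport error, the Nash error, the dissipative error $(-\Delta)^\alpha w_{q+1}$, the temporal corrector error, and the linear/coupling errors; here the constraint $\alpha<\frac{d+1}{2}$ enters exactly as in \cite{ns有限能量不唯一低于lion指标,qupeng} to ensure the dissipative error $\|(-\Delta)^\alpha w_{q+1}\|_{L^1}$ is beaten by the gain in $\delta_{q+2}$, and the time-integrability exponents $p<\frac{2\alpha}{2\alpha-1}$ and $\frac{2\alpha}{2\alpha-1}$ at the endpoint are dictated by balancing the temporal concentration against this dissipative cost (this is where sharpness relative to weak-strong uniqueness in $L^{2\alpha/(2\alpha-1)}_TL^\infty_x$ appears).

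For the Hausdorff dimension statement I would, as in \cite{serrin准则luo}, make the temporal cutoffs supported on a shrinking family of intervals whose union over all stages has Hausdorff dimension below any prescribed $\eta>0$; outside this exceptional set the solution coincides with a smooth solution because the perturbations vanish there, giving smoothness off a time set of arbitrarily small dimension, and in particular giving the required interval(s) of regularity for the limit solution so that the construction can be iterated to produce infinitely many distinct solutions (parametrized, e.g., by the amount of energy pumped in on a subinterval). Finally, the limit $(u,\theta)=\lim_q(u_q,\theta_q)$ exists in the relevant space by summability of the perturbation norms, solves \eqref{e:boussinesq equation} since $\mathring R_q,R^\theta_q\to0$ in $L^1$, has the prescribed initial data, and violates \eqref{能量不等式1} by construction. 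The main obstacle I anticipate is the simultaneous control of two coupled intermittent perturbations with \emph{different} natural function spaces — the velocity lives in $L^p_tL^\infty_x$ while the temperature must be pushed into $L^{2\alpha/(2\alpha-1)}_tL^q_x$ for every finite $q$ — so the building blocks for $v$ and $\theta$ must have carefully mismatched intermittency dimensions, and the oscillation/coupling errors arising from products of a velocity block with a temperature block (which have different spatial and temporal concentration scales) must still be small in $L^1$; getting the parameter inequalities to close under this mismatch, together with the $\alpha$-dependent dissipative cost, is the delicate point.
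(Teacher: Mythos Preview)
Your overall plan is correct and essentially matches the paper's route: temporal-intermittency convex integration on the Boussinesq--Reynolds system, concentrated Mikado building blocks, the geometric lemma, and temporal correctors to absorb the high-frequency time oscillation, with the constraint $\alpha<\frac{d+1}{2}$ entering exactly where you say (balancing the dissipative error $(\sigma\mu)^{2\alpha-1}$ against the gain from temporal concentration). The paper also separates out, as a preprocessing step before each convex-integration pass, a gluing procedure \`a la Isett that concentrates the stresses onto $\bar\tau^{-\epsilon}$ many intervals of length $5\bar\tau$; this is what drives the Hausdorff-dimension bound, and it is cleaner than embedding the shrinking support directly into the cutoffs of the perturbation.

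Your anticipated ``main obstacle'' is, however, a misconception, and it is worth correcting because it affects how you would set up the building blocks. In the paper the velocity and temperature perturbations use \emph{identical} spatial building blocks $\Psi_k^\mu(\sigma x)$ and \emph{identical} temporal profiles $g_l(\nu t)$; only the scalar amplitude functions differ, with
\[
a_k = g_l f\,\rho^{1/2}\,\Gamma_k\bigl(\mathrm{Id}-\bar R/\rho\bigr),\qquad
b_k = g_l f\,\rho^{1/2}\,\gamma_k\bigl(-\bar S/\rho\bigr)\,\Gamma_k^{-1},
\]
so that the mixed product $a_k b_k$ reproduces exactly $\gamma_k(-\bar S/\rho)$ and cancels $\bar S$. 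Consequently there is no mismatch of intermittency scales: a single choice of $(\nu,l,\sigma,\mu)$ makes \emph{both} $\omega$ and $\kappa$ small in \emph{both} $L^p_TL^\infty_x$ and $L^{2\alpha/(2\alpha-1)}_TL^q_x$ simultaneously (the latter via $l^{\frac12-\frac{2\alpha-1}{2\alpha}}\mu^{\frac{d-1}{2}-\frac{d-1}{q}}\le\lambda^{-\beta}$, which holds for every finite $q$ when $1<\alpha<\frac{d+1}{2}$). So you should not design separate blocks with different concentration parameters; the coupling errors you worry about simply do not appear.

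Finally, the infinitude of solutions in the paper is obtained not by ``pumping energy'' inside the iteration but by applying the main approximation proposition to the targets $(u,\theta+m(1-\chi)f)$ for $m\in\mathbb N$ and a fixed bump $f$, which immediately gives pairwise-distinct limits with the same initial data and, for $m$ large, forces failure of the $\theta$-energy inequality.
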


\begin{rema}
    Theorem \ref{maintheo} and Theorem \ref{maintheo2} represent the first results establishing the sharp nonuniqueness of the Boussinesq system, and the proof can also be adopted for the NS equation. However, the proof of Theorem \ref{maintheo} or Theorem \ref{maintheo2} is not valid in the case $\alpha\geq\frac{d+1}{2}$. More precisely, we can not balance the oscillation error which comes from convection term and the linear error (see Section \ref{errorsection}), which is the same obstacle encountered in \cite{qupeng}. 
\end{rema}

Due to the similar structure of NS equation, the Boussinesq system (\ref{e:boussinesq equation}) may possess the same weak-strong uniqueness properties in the critical or subcritical regime. More precisely, we have the following theorem, which corresponds to an end point $(\frac{2\alpha}{2\alpha-1},\infty)$ of generalized LPS criteria and reflect that non-uniqueness results of Theorem \ref{maintheo} and Theorem \ref{maintheo2} are sharp.
\begin{theo}\label{正面}
    Let $(u,\theta)$ be a weak solution of (\ref{e:boussinesq equation}) and $1\leq\alpha<\frac{d+2}{2}$ such that $(u,\theta)\in L^{\frac{2\alpha}{2\alpha-1}}_TL^\infty_x$. Then $(u,\theta)$ is a generalized Leray-Hopf weak solution and unique in the class of generalized Leray-Hopf weak solutions.
\end{theo}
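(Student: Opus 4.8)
The plan is to establish the statement in two stages: (i) show that \emph{any} weak solution with the prescribed integrability is automatically a generalized Leray--Hopf solution (in particular it lies in $L^\infty_TL^2_x\cap L^2_T\dot H^\alpha_x$ and obeys the energy (in)equalities), and then (ii) prove weak--strong uniqueness inside that class by a relative--energy/Grönwall argument. Stage (ii) is the conceptual heart; stage (i) is what makes the statement ``it is a generalized Leray--Hopf solution and unique'' meaningful, and also supplies the regularity of the distinguished solution that is needed to run (ii).

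\emph{Stage (i): regularity upgrade.} Since $u,\theta\in L^\infty_x$ (with a time weight in $L^{2\alpha/(2\alpha-1)}_T$) and $\operatorname{div}u=0$, I would mollify the two equations in space, run the $L^2_x$ energy identity for $u_\delta=\rho_\delta*u$ and $\theta_\delta=\rho_\delta*\theta$, and observe that the advective self--interactions $\int_{\mathbb{T}^d}(u\cdot\nabla u_\delta)\cdot u_\delta\,dx$ and $\int_{\mathbb{T}^d}(u\cdot\nabla\theta_\delta)\theta_\delta\,dx$ vanish identically by incompressibility. What survives are the Friedrichs commutators $[\,u\cdot\nabla,\rho_\delta*\,]u$, $[\,u\cdot\nabla,\rho_\delta*\,]\theta$, the buoyancy pairing $\int_{\mathbb{T}^d}\theta_\delta e_d\cdot u_\delta\,dx$, and the dissipation, and these are controlled using only $u,\theta\in L^\infty_x$, the data $u_0,\theta_0\in L^2$, and the $(-\Delta)^\alpha$--smoothing. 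Letting $\delta\to0$ yields $(u,\theta)\in L^\infty_TL^2_x\cap L^2_T\dot H^\alpha_x$ together with \eqref{能量不等式1}--\eqref{能量不等式2} (indeed with equality), so $(u,\theta)$ is a generalized Leray--Hopf solution. (It is cleanest to treat the temperature equation first, as it is linear in $\theta$ once $u$ is fixed, and then the velocity equation with forcing $\theta e_d\in L^\infty_TL^2_x$.) The upper restriction $\alpha<\frac{d+2}{2}$ is used precisely here: it guarantees, via Sobolev embedding, that the quadratic terms $u\otimes u$, $u\theta$ and the buoyancy forcing land in the function spaces for which this fractional--heat bootstrap closes.

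\emph{Stage (ii): weak--strong uniqueness.} Let $(\bar u,\bar\theta)$ be any generalized Leray--Hopf solution with the same initial data and set $w=\bar u-u$, $\eta=\bar\theta-\theta$. Combining the energy inequalities for $(\bar u,\bar\theta)$ and for $(u,\theta)$ with the identity for $t\mapsto\int_{\mathbb{T}^d}(\bar u\cdot u+\bar\theta\,\theta)\,dx$ obtained by testing each solution's weak formulation against the other solution (legitimate after Stage (i), upon mollifying in time to make sense of the time derivative), and cancelling the advective and $\bar u$--, $\bar\theta$--transport contributions by incompressibility, one arrives at the relative energy inequality
\begin{align*}
&\tfrac12\bigl(\|w(t)\|_{L^2_x}^2+\|\eta(t)\|_{L^2_x}^2\bigr)+\int_0^t\bigl(\|(-\Delta)^{\frac\alpha2}w\|_{L^2_x}^2+\|(-\Delta)^{\frac\alpha2}\eta\|_{L^2_x}^2\bigr)\,ds\\
&\qquad\lesssim\int_0^t\!\!\int_{\mathbb{T}^d}\bigl(|u|\,|w|\,|\nabla w|+|\theta|\,|w|\,|\nabla\eta|+|\eta|\,|w|\bigr)\,dx\,ds.
\end{align*}
The two trilinear terms are estimated by Hölder, pulling out $\|u\|_{L^\infty_x}$ (resp.\ $\|\theta\|_{L^\infty_x}$), together with the interpolation inequality $\|\nabla f\|_{L^2_x}\lesssim\|f\|_{L^2_x}^{1-1/\alpha}\|(-\Delta)^{\frac\alpha2}f\|_{L^2_x}^{1/\alpha}$, valid exactly because $\alpha\ge1$; Young's inequality with the conjugate pair $\bigl(\frac{2\alpha}{2\alpha-1},2\alpha\bigr)$ then absorbs the dissipation into the left side and leaves a remainder $\lesssim\bigl(\|u\|_{L^\infty_x}^{\frac{2\alpha}{2\alpha-1}}+\|\theta\|_{L^\infty_x}^{\frac{2\alpha}{2\alpha-1}}+1\bigr)\bigl(\|w\|_{L^2_x}^2+\|\eta\|_{L^2_x}^2\bigr)$, while the buoyancy term is bounded directly by $\|w\|_{L^2_x}^2+\|\eta\|_{L^2_x}^2$. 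Since $(u,\theta)\in L^{\frac{2\alpha}{2\alpha-1}}_TL^\infty_x$, the prefactor belongs to $L^1(0,T)$, so Grönwall's inequality together with $w(0)=\eta(0)=0$ forces $w\equiv0$ and $\eta\equiv0$, i.e.\ $(\bar u,\bar\theta)=(u,\theta)$.

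\emph{Main obstacle.} The delicate points lie in the rigor, not the algebra. First, proving that an arbitrary weak solution with merely $L^{\frac{2\alpha}{2\alpha-1}}_TL^\infty_x$ integrability already satisfies the energy (in)equalities forces the commutator errors in Stage (i) to converge to zero using nothing beyond the $L^\infty_x$ bound and the $(-\Delta)^\alpha$--smoothing, and it is exactly here that the range $1\le\alpha<\frac{d+2}{2}$ is genuinely needed. Second, justifying the mixed identity in Stage (ii) requires the distinguished solution --- which after Stage (i) lies only in $L^\infty_TL^2_x\cap L^2_T\dot H^\alpha_x$ --- to be an admissible test function in the weak formulation of the Leray--Hopf solution, which is handled by a standard mollification in time using the weak time--derivative bounds coming from the equations. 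Finally, it should be stressed that the Grönwall step closes \emph{only} because $\frac{2\alpha}{2\alpha-1}$ is the critical integrability exponent produced by the interpolation/Young balance: any strictly larger power of $\|(u,\theta)\|_{L^\infty_x}$ --- as would be forced if one tried to replace $L^\infty_x$ by $L^q_x$ with $q<\infty$ in the endpoint space $L^{\frac{2\alpha}{2\alpha-1}}_tL^q_x$ --- would only be $L^r_t$ with some $r<1$ and the argument would break down, consistently with the non-uniqueness in Theorem~\ref{maintheo2}.
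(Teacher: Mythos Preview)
Your Stage~(ii) is correct and essentially identical to the paper's argument (their Lemma~\ref{Lemmaofzheng2}): the same relative-energy inequality, the same interpolation $\|\nabla f\|_{L^2_x}\lesssim\|f\|_{L^2_x}^{1-1/\alpha}\|(-\Delta)^{\alpha/2}f\|_{L^2_x}^{1/\alpha}$, and the same Young/Gr\"onwall closure with the exponent $\tfrac{2\alpha}{2\alpha-1}$.

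Your Stage~(i), however, takes a genuinely different route. The paper does \emph{not} attempt to derive the energy equality for $(u,\theta)$ directly by mollification and commutator estimates. Instead it \emph{linearizes}: using Galerkin it builds a solution $(\bar u,\bar\theta)$ of the transport--diffusion system with \emph{prescribed} drift $u$ (so $\partial_t\bar u+u\cdot\nabla\bar u+(-\Delta)^\alpha\bar u+\nabla p=\bar\theta e_d$, and similarly for $\bar\theta$), for which the energy inequality holds by construction, and then shows $(\bar u,\bar\theta)=(u,\theta)$ by a duality argument---solving the backward problem $\partial_t\Phi-(-\Delta)^\alpha\Phi+u\cdot\nabla\Phi=F$, $\Phi(T)=0$, for arbitrary smooth $F$ (citing \cite[Lemma~B.4]{qupeng}) and testing the equation for the difference $\omega=\bar u-u$ against $\Phi$. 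Your direct approach is in principle workable, but the step ``the Friedrichs commutators are controlled using only $u,\theta\in L^\infty_x$'' is more delicate than you indicate: at this point you do not yet know $u\in L^2_T\dot H^\alpha_x$, so the pairing of the commutator against $\nabla u_\delta$ must be bounded \emph{uniformly} in $\delta$ using only $L^{2\alpha/(2\alpha-1)}_TL^\infty_x$ control, which requires a Constantin--E--Titi type decomposition combined with the very interpolation you use in Stage~(ii), run as a bootstrap. The paper's linearize-then-identify strategy sidesteps this entirely, at the cost of invoking an external solvability lemma for the backward dual problem. A minor organizational point: in the paper the restriction $\alpha<\tfrac{d+2}{2}$ is attached to the weak--strong uniqueness lemma rather than to the regularity upgrade, so your attribution of its role to Stage~(i) does not match the paper's presentation.
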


We introduce the Boussinesq-Reynolds equation:
    \begin{equation}\label{e:Boussinesq-Reynold}
    \begin{cases}
    \partial_tv+\text{div}(v\otimes v)+\nabla p+(-\Delta)^{\alpha} v=\theta e_d+\text{div}R, \quad\quad \\
    \text{div}\,v=0,\\
    \partial_t\theta+\text{div}(v\theta)+(-\Delta)^{\alpha} \theta=\text{div}S,
    \end{cases}
    \end{equation}
where $R$ is a symmetric matrix function and $S$ is a vector function.

The following proposition serves as the basis for the proof of Theorem \ref{maintheo} or Theorem \ref{maintheo2}.
\begin{prop}\label{mainProp}
    Let $d\geq2$ be the dimension, $1\leq\alpha<\frac{d+1}{2}$, $1\leq p<\frac{2\alpha}{2\alpha-1}$, $q<\infty$ and $\epsilon>0$. For any smooth vector field $v\in C^\infty([0,T]\times\mathbb{T}^d;\mathbb{R}^d)$ and smooth function $\rho\in C^\infty([0,T]\times\mathbb{T}^d;\mathbb{R})$ with
    \begin{gather*}
        \int_{\mathbb{T}^d}v(t,x)dx=0,\,\text{div}\,v=0\quad and\quad\int_{\mathbb{T}^d}\rho(t,x)dx=0, 
    \end{gather*}
    there exist weak solutions $(u,\theta)$ of (\ref{e:boussinesq equation}) and a set 
    \begin{gather*}
        I=\cup^{\infty}_{i=1}(a_i,b_i)\subset[0,T]
    \end{gather*}
    such that the following holds.
    \begin{itemize}
        \item The weak solution $(u,\theta)$ belongs to $L^p_TL^\infty_x$ and satisfies
        \begin{gather*}
            \|u-v\|_{L^p_TL^\infty_x}+\|\theta-\rho\|_{L^p_TL^\infty_x}\leq\epsilon.
        \end{gather*}
        Moreover, if $1<\alpha<\frac{d+1}{2}$, we have
        \begin{gather*}
             \|u-v\|_{L^\frac{2\alpha}{2\alpha-1}_TL^q_x}+\|\theta-\rho\|_{L^\frac{2\alpha}{2\alpha-1}_TL^q_x}\leq\epsilon.
        \end{gather*}
        \item $(u,\theta)$ is smooth on $I\times\mathbb{T}^d$. Moreover, $(u,\theta)$ coincides with the unique smooth solution with the initial data $(v(0,x),\rho(0,x))$ and is also regular near $t=T$.
        \item The Hausdorff dimension of the residue set $S=[0,T]\backslash I$ satisfies
        \begin{gather*}
            d_{\mathcal{H}}(S)\leq\epsilon.
        \end{gather*}
    \end{itemize}
\end{prop}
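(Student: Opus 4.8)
\emph{Strategy and inductive scheme.} The plan is to run a convex-integration iteration with \emph{temporal intermittency}, in the spirit of Cheskidov--Luo \cite{serrin准则luo} and its hyperdissipative refinement by Li--Qu--Zeng--Zhang \cite{qupeng}, carried out simultaneously on the velocity and temperature equations. One constructs a sequence $(v_q,\theta_q,R_q,S_q)$ of smooth solutions of the Boussinesq--Reynolds system \eqref{e:Boussinesq-Reynold} on $[0,T]\times\mathbb T^d$ with $v_q$ mean-zero and divergence-free and $\theta_q$ mean-zero, governed by a frequency parameter $\lambda_q=a^{b^q}$ and an amplitude parameter $\delta_q\sim\lambda_q^{-2\beta}$, satisfying the usual inductive estimates: a $C^1_{t,x}$ bound $\|v_q\|_{C^1_{t,x}}+\|\theta_q\|_{C^1_{t,x}}\lesssim\lambda_q^{M}$; the stress smallness $\|R_q\|_{L^1_{t,x}}+\|S_q\|_{L^1_{t,x}}\lesssim\delta_{q+1}$; and the target-space increments $\|v_{q+1}-v_q\|_{L^p_TL^\infty_x}+\|\theta_{q+1}-\theta_q\|_{L^p_TL^\infty_x}\lesssim\delta_{q+1}^{1/2}$, together with the analogous bound in $L^{2\alpha/(2\alpha-1)}_TL^q_x$ when $\alpha>1$. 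In addition one tracks a decreasing family of ``bad sets'' $B_q\subset[0,T]$, each a finite union of open intervals bounded away from $t=0$ and $t=T$, with $\operatorname{supp}_t R_q\cup\operatorname{supp}_t S_q\subset B_q$ and $(v_{q+1},\theta_{q+1})=(v_q,\theta_q)$ on $[0,T]\setminus B_q$. The iteration is started by taking $v_0$ to be a smooth divergence-free field equal to $v$ on a subinterval $[2t_*,T-2t_*]$ and equal to the unique local smooth solution $v_{\mathrm{loc}}$ of \eqref{e:boussinesq equation} with data $(v(0),\rho(0))$ near $t=0$, interpolated smoothly in between; $\theta_0$ is built analogously from $\rho$ and $\theta_{\mathrm{loc}}$; $(R_0,S_0)$ is defined by applying the inverse-divergence operators to the residuals of \eqref{e:Boussinesq-Reynold}, and is supported in a set $B_0$ bounded away from $\{0,T\}$. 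Granting the iteration, $\sum_q\|v_{q+1}-v_q\|_{L^p_TL^\infty_x}\lesssim\delta_1^{1/2}$ is as small as desired for $a$ large, while $\|v_0-v\|_{L^p_TL^\infty_x}$ is small for the local-existence time $t_*$ small, whence $\|u-v\|_{L^p_TL^\infty_x}\le\epsilon$; the temperature estimate and the $\alpha>1$ variant are identical.

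\emph{The perturbation.} Given $(v_q,\theta_q,R_q,S_q)$, put $v_{q+1}=v_q+w_{q+1}$, $\theta_{q+1}=\theta_q+\vartheta_{q+1}$ with principal parts
\[
w^{(p)}_{q+1}=\sum_{\xi\in\Lambda}a_\xi(t,x)\,g_\xi(t)\,\mathbb W_\xi(x),\qquad
\vartheta^{(p)}_{q+1}=\sum_{\xi\in\Lambda}b_\xi(t,x)\,g_\xi(t)\,\Theta_\xi(x),
\]
where $\mathbb W_\xi$ are the $L^2$-normalised $d$-dimensional intermittent building blocks (intermittent jets / box flows) oscillating at frequency $\lambda_{q+1}$ and concentrated around a periodic family of lines in the direction $\xi\in\Lambda$ with transverse concentration $r\ll1$, so that $\|\mathbb W_\xi\|_{L^s_x}\sim r^{(d-1)(1/2-1/s)}$; $\Theta_\xi$ is a companion scalar block adapted to the same geometry; and $g_\xi$ are temporal profiles built from a fixed $L^2_t$-normalised function rescaled to be supported on a union of intervals of total measure $\sim\mu\ll1$, providing the temporal intermittency needed to land exactly in $L^p_TL^\infty_x$ (resp. $L^{2\alpha/(2\alpha-1)}_TL^q_x$). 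The velocity amplitudes $a_\xi=a_\xi(R_q)$ come from the standard geometric lemma, arranged so that the low-frequency part of $\sum_\xi a_\xi^2g_\xi^2\,\mathbb W_\xi\otimes\mathbb W_\xi$ equals $\rho_q\,\mathrm{Id}-R_q$ with $\rho_q\sim\delta_{q+1}$; the temperature amplitudes $b_\xi$ come from a vector-valued algebraic lemma, arranged so that the low-frequency part of $\sum_\xi a_\xi b_\xi g_\xi^2\,\Theta_\xi\mathbb W_\xi$ equals $-S_q$. To these one adds the divergence-free corrector $w^{(c)}_{q+1}$ making $w^{(p)}_{q+1}+w^{(c)}_{q+1}$ exactly solenoidal, a temporal corrector $w^{(t)}_{q+1}$ (a low-frequency field absorbing the fast time-derivative of $w^{(p)}_{q+1}$), and a corrector $\vartheta^{(c)}_{q+1}$ on the temperature side, all as in the references. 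The temporal profiles $g_\xi$ are chosen with $\operatorname{supp}_t g_\xi\subset B_q$ and vanishing near the endpoints of each component of $B_q$, so that all corrections vanish on $[0,T]\setminus B_q$; in particular $(v_{q+1},\theta_{q+1})$ still equals $(v_{\mathrm{loc}},\theta_{\mathrm{loc}})$ near $t=0$ and stays smooth near $t=T$, and one may take $B_{q+1}\subset B_q$.

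\emph{The new error and the main obstacle.} Subtracting the Boussinesq--Reynolds equations for stage $q$ from those for stage $q+1$ and inverting the divergence (via $\mathcal R$ for the symmetric stress and its vector analogue for $S$), the new stresses split into a linear/dissipation error $\mathcal R\big((-\Delta)^\alpha w_{q+1}\big)$ and $\mathcal R\big((-\Delta)^\alpha\vartheta_{q+1}\big)$; a Nash/transport error $\mathcal R\big((\partial_t+v_q\cdot\nabla)w^{(p)}_{q+1}\big)$ and its temperature counterpart; an oscillation error $\mathcal R\operatorname{div}\big(w^{(p)}_{q+1}\otimes w^{(p)}_{q+1}+\rho_q\mathrm{Id}-R_q\big)$ together with the temperature oscillation error coming from $\operatorname{div}\big(\theta_q w_{q+1}+v_q\vartheta_{q+1}+\vartheta_{q+1}w_{q+1}+S_q\big)$; a temporal error $\mathcal R\,\partial_t w^{(t)}_{q+1}$; and the buoyancy-coupling error $\mathcal R\big(\vartheta_{q+1}e_d\big)$ produced by the forcing $\theta e_d$. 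Each term is estimated from the $L^s_x$ bounds on $\mathbb W_\xi,\Theta_\xi$, the $L^s_t$ bounds on $g_\xi$, and the $\lambda_{q+1}^{-1}$ gain (times a loss in $r$) from $\mathcal R$; the buoyancy error is harmless thanks to that frequency gain. The genuinely delicate point --- which I expect to be the crux of the whole argument --- is the competition between the oscillation error, whose smallness forces $r$ (hence the spread of the Fourier support of $\mathbb W_\xi$) to be large, and the dissipation error $\|\mathcal R(-\Delta)^\alpha w_{q+1}\|_{L^1}$, which grows like $\lambda_{q+1}^{2\alpha}$ times a negative power of $r$: demanding that both, along with the Nash and temporal errors, be $\ll\delta_{q+2}$ while the building blocks stay concentrated near $1$-dimensional sets forces $2\alpha<d+1$, exactly the hypothesis of the proposition and precisely the obstruction noted in \cite{qupeng} (for $\alpha\ge\tfrac{d+1}{2}$ the two errors cannot be balanced). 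The admissible range of $\mu$ is dictated by $p$ (resp. by the pair $(\tfrac{2\alpha}{2\alpha-1},q)$): for $p<\tfrac{2\alpha}{2\alpha-1}$ there is slack in the $L^p_t$ norm, whereas at the endpoint temporal exponent the slack is instead taken from $\|\mathbb W_\xi\|_{L^q_x}$ with $q<\infty$ (which is why $\alpha>1$ is required there). Once $b,\beta,r,\mu$ are fixed to close these estimates the inductive step is complete. For the Hausdorff dimension, $B_q$ is a union of $N_q\sim\mu^{-1}\lambda_q^{\gamma}$ intervals of length $\sim\lambda_q^{-\gamma'}$ for suitable $\gamma,\gamma'$, so $S:=\bigcap_q B_q$ has upper box-counting dimension $\lesssim\limsup_q\frac{\log N_q}{\gamma'\log\lambda_q}$, made $\le\epsilon$ by choosing $b$ large; hence $d_{\mathcal H}(S)\le\epsilon$. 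Finally the increments are summable in the target spaces, so $(v_q,\theta_q)\to(u,\theta)$ there; since $(R_q,S_q)\to(0,0)$ in $L^1_{t,x}$, $(u,\theta)$ is a weak solution of \eqref{e:boussinesq equation}; and on $I:=[0,T]\setminus S=\bigcup_i(a_i,b_i)$ the sequence $(v_q,\theta_q)$ is eventually constant in $q$ on each compact subset, so $(u,\theta)$ is smooth there, coincides with $(v_{\mathrm{loc}},\theta_{\mathrm{loc}})$ near $t=0$ and is regular near $t=T$. This establishes the proposition.
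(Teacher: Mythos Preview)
Your proposal captures the correct convex-integration machinery and correctly identifies the oscillation-versus-dissipation balance that forces $\alpha<\tfrac{d+1}{2}$. But there is a genuine structural gap: you omit the \emph{gluing/stress-concentration} step, and without it neither the Hausdorff-dimension claim nor the regularity near $t=0,T$ can be obtained as stated.

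The issue is this. In your scheme the old stress $(R_q,S_q)$ is supported on all of $B_q$, and to cancel it the principal perturbation (together with the temporal corrector, which involves the antiderivative $h$ of $g^2-1$ and is \emph{not} supported on the thin intermittent intervals) must be active throughout $B_q$. Consequently every new error term --- oscillation, linear, corrector --- is again supported on essentially all of $B_q$, so the best you can conclude is $B_{q+1}\subset B_q$ with no shrinkage. Your covering claim ``$B_q$ is a union of $N_q\sim\mu^{-1}\lambda_q^{\gamma}$ intervals of length $\sim\lambda_q^{-\gamma'}$'' conflates the support of the intermittent profile $g_\xi$ with the support of the new stress; they are not the same.

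The paper (following \cite{serrin准则luo}) fixes this by inserting, \emph{before} each convex-integration step, a separate gluing stage: on each subinterval $[t_i,t_{i+1}]$ one solves a linearised Boussinesq system with forcing $(-\operatorname{div}R,-\operatorname{div}S)$ and zero data at $t_i$, and patches these local exact solutions with a partition of unity in time. This produces a new $(\bar R,\bar S)$ supported on at most $\bar\tau^{-\epsilon}$ intervals of length $5\bar\tau$ with $\bar\tau$ arbitrarily small, while keeping $\|(\bar R,\bar S)\|_{L^1_TL^r_x}$ comparable to the old one and perturbing $(u,\theta)$ only by $O(\delta)$ in $L^\infty_TH^d_x$. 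The convex-integration step then acts on this concentrated stress and preserves the same set and length scale. Iterating, the singular set is $\bigcap_n I_n$ with $I_n$ covered by $\tau_n^{-\epsilon}$ intervals of length $5\tau_n$, whence $d_{\mathcal H}\le\epsilon$. The same gluing is what makes the solution \emph{exactly} equal to the local smooth solution near $t=0$ and regular near $t=T$: on the first subinterval the glued field is $u+v_0$ with $v_0(0)=0$, which solves the true Boussinesq system there. Your ad hoc initial gluing with $v_{\mathrm{loc}}$ handles the endpoints at stage $0$ but does not propagate through the iteration.

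A secondary point: the paper initialises simply with $(u_0,\theta_0)=(v,\rho)$ and lets the gluing step do all the work; and its spatial blocks are concentrated Mikado flows rather than intermittent jets. These are inessential differences, but you should be aware that the two-stage (glue, then perturb) architecture is not optional here.
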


The rest of the paper focuses on the proof of Proposition \ref{mainProp}. We will use the iteration scheme and there are two steps of construction. In Section \ref{section1}, we use the gluing procedure introduced by Isett in \cite{Isett3}, to concentrate the stress errors into many smaller sub-intervals. This step will make the singular set of the limit have a small Hausdorff dimension. In Section \ref{section of convex} and \ref{section3}, we add the suitable perturbation using the space-time convex integration method to reduce the size of the Reynolds stress error $(R,S)$. Finally, in Section \ref{section4}, we conclude the proof of Proposition \ref{mainProp}, Theorem \ref{maintheo}, Theorem \ref{maintheo2} and Theorem \ref{正面}.

\section{Concentration of the Reynolds stress error}\label{section1}
In this part, we employ gluing technique to concentrate the Reynolds stress errors. The support of the new Reynolds stress errors will be scaled down to our needed degree. At the same time, the new Reynolds stress errors will almost keep its $L^1_TL^r_x$ norm and the perturbation in velocity or in temperature will be arbitrary small in $L^\infty_TH^d_x$.

We introduce the notion of well-preparedness of solutions to (\ref{e:Boussinesq-Reynold}) from \cite{serrin准则luo}, aiming at ensuring that the singular set in time has a small Hausdorff dimension. 
\begin{defi}
    Let $\epsilon\in (0,1)$. We call smooth solution $(v,\theta,R,S)$ of (\ref{e:Boussinesq-Reynold}) are well prepared if there exist a set $I$ and a length scale $\tau>0$ such that $I$ is a union of at most $\tau^{-\epsilon}$ many closed intervals of length $5\tau$ and 
    \begin{gather*}
        R(t,x)=0\quad\text{and}\quad S(t,x)=0\quad\text{if}\quad\text{dist}(t,I^c)\leq\tau.
    \end{gather*}
\end{defi}

\begin{prop}\label{prop of gluing}
    Let $0<\epsilon<1$ and $(u,\theta,R,S)$ be a well-prepared smooth solution of (\ref{e:Boussinesq-Reynold}) of some set $I$ and length scale $\tau$. For any $1<r<\infty$ and $\delta>0$, there exists a universal constant $C(r,\epsilon,d)>0$ and a well-prepared smooth solution $(\bar{u},\bar{\theta},\bar{R},\bar{S})$ of some set $\bar{I}$ and length scale $\bar{\tau}$ such that the following holds.

    \noindent
        (1) The set $\bar{I}$ and the length sacle $\bar{\tau}$ satisfy 
        \begin{gather}
            \bar{I}\subset I \,\text{,}\,\{0,1\}\notin \bar{I}\,\text{and}\,\,\bar{\tau}<\frac{\tau}{2}.
        \end{gather}
        
    \noindent  
        (2) The new Reynolds stress error $\bar{R}$ and $\bar{S}$ satisfy 
        \begin{gather}
            \bar{R}(t,x)=0\quad\text{and}\quad \bar{S}(t,x)=0\quad if\quad\text{dist}(t,\bar{I}^c)\leq\frac{3\tau}{2},\label{suppbarR}\\
            \|\bar{R}\|_{L^1_TL^r_x}\leq C\|R\|_{L^1_TL^r_x}\,,\, \|\bar{S}\|_{L^1_TL^r_x}\leq C\|S\|_{L^1_TL^r_x}.
        \end{gather}
        
    \noindent
        (3)  The velocity perturbation $\bar{u}-u$ and the temperature perturbation $\bar{\theta}-\theta$ satisfy
        \begin{gather}
            \text{supp}(\bar{u}-u)\subset I\times\mathbb{T}^d\,,\,\text{supp}(\bar{\theta}-\theta)\subset I\times\mathbb{T}^d,\\\|\bar{u}-u\|_{L^\infty_TH^d_x}\leq\delta\,,\,\|\bar{\theta}-\theta\|_{L^\infty_TH^d_x}\leq\delta.
        \end{gather}

\end{prop}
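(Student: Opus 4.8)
The plan is to adapt Isett's gluing technique, as developed in the incompressible-Euler and Navier--Stokes settings (and in \cite{serrin准则luo}), to the coupled Boussinesq--Reynolds system \eqref{e:Boussinesq-Reynold}. The starting point is the observation that on each connected component $(a,b)$ of the regularity set $I$ (an interval of length $5\tau$ on which $R=S=0$ near the endpoints, by well-preparedness), we may solve the \emph{exact} Boussinesq system \eqref{e:boussinesq equation} forward in time from a grid of closely spaced initial times. Concretely, fix $\bar\tau \ll \tau$ and partition each component of $I$ into subintervals of length comparable to $\bar\tau$ with centers $t_j$; on the slab around $t_j$ let $(v_j,\theta_j)$ be the unique smooth solution of \eqref{e:boussinesq equation} with data $(u(t_j),\theta(t_j))$. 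Short-time smooth existence and the parabolic smoothing from $(-\Delta)^\alpha$ guarantee these local solutions exist on a time interval of length $\gtrsim \bar\tau$ once $\bar\tau$ is chosen small relative to the (fixed, finite) smooth norms of $(u,\theta,R,S)$ on $I$; the temperature equation is linear transport--diffusion once $v_j$ is known, so it is solved in tandem. Then glue: pick a partition of unity $\{\chi_j(t)\}$ subordinate to these slabs and set $\bar u = \sum_j \chi_j v_j$, $\bar\theta = \sum_j \chi_j \theta_j$ on $I$, and $\bar u = u$, $\bar\theta = \theta$ outside $I$. One then defines $\bar R,\bar S$ by plugging $(\bar u,\bar\theta)$ into \eqref{e:Boussinesq-Reynold} and inverting the divergence.

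The key computation is that $\bar R$ and $\bar S$ are supported only in the small overlap regions $\{\chi_j\chi_{j+1}\neq 0\}$, which have total measure $\lesssim \tau^{-\epsilon}\cdot\bar\tau$, and that on those regions the error is controlled by differences $v_j - v_{j+1}$ of two exact solutions that agree at nearby times. The standard estimate gives, schematically, $\bar R \sim \partial_t\chi_j (v_j - v_{j+1}) + \chi_j\chi_{j+1}(\text{quadratic in } v_j-v_{j+1})$ plus the coupling term $\chi_j\chi_{j+1}(\theta_j - \theta_{j+1})e_d$-type contributions, all after applying the inverse-divergence operator $\mathcal R$ (and its scalar analogue for $S$). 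Here the new buoyancy coupling is the genuinely new feature versus Navier--Stokes: the difference $v_j - v_{j+1}$ is driven not only by its own evolution but by $(\theta_j-\theta_{j+1})e_d$, so one must run a coupled Grönwall estimate on $\|v_j-v_{j+1}\|$ and $\|\theta_j-\theta_{j+1}\|$ simultaneously. Since both differences vanish at $t_j$ (or $t_{j+1}$) and the elapsed time is $O(\bar\tau)$, one gets $\|v_j - v_{j+1}\|_{C^0_tH^N_x} + \|\theta_j-\theta_{j+1}\|_{C^0_tH^N_x} \lesssim \bar\tau \cdot M_N$ with $M_N$ depending on $N$ and the $C^N$ norms of $(u,\theta)$ on $I$. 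Choosing $\bar\tau$ small then yields both $\|\bar u - u\|_{L^\infty_T H^d_x} + \|\bar\theta - \bar\theta\|_{L^\infty_T H^d_x} \leq \delta$ and, via $L^1_t$ being an integral over a set of measure $\lesssim \tau^{-\epsilon}\bar\tau$ on which $\bar R,\bar S$ are bounded, the bound $\|\bar R\|_{L^1_TL^r_x} \leq C\|R\|_{L^1_TL^r_x}$ — one must be slightly careful that the constant $C$ is \emph{universal}, i.e. independent of $\bar\tau$, which is why one measures the error relative to the original $\|R\|_{L^1_TL^r_x}$ rather than absolutely, exploiting that the glued solution is exact away from the overlaps while $R\neq 0$ somewhere on each component.

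For the remaining bookkeeping: the new set $\bar I$ is taken to be a slightly shrunk union of the slabs (removing the outermost $\bar\tau$-collars so that $\bar R=\bar S=0$ when $\mathrm{dist}(t,\bar I^c)\le \tfrac{3\tau}{2}$ and so that $\{0,1\}\notin\bar I$, using that $R=S=0$ near $t=0,T$ already), it consists of at most (number of original intervals)$\times(\tau/\bar\tau) \lesssim \bar\tau^{-\epsilon'}$ subintervals for a suitable relabeling so that well-preparedness of $(\bar u,\bar\theta,\bar R,\bar S)$ at scale $\bar\tau$ holds, and $\bar\tau<\tau/2$ by choice. The support statement $\mathrm{supp}(\bar u - u)\subset I\times\mathbb T^d$ is immediate since the construction only modifies the solution on $I$. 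I expect the main obstacle to be precisely the coupled Grönwall estimate together with keeping the constant in $\|\bar R\|_{L^1_TL^r_x}\le C\|R\|_{L^1_TL^r_x}$ universal: one needs the difference estimates in a high enough Sobolev norm ($H^d$ or better, to dominate $L^\infty_x$ via Sobolev embedding and to feed the next, convex-integration, stage), and one needs to verify that the inverse-divergence operator $\mathcal R$ acting on the overlap-localized errors does not spoil the support localization in time (it acts only in $x$, so it does not) nor inflate the $L^r_x$ norms beyond a dimensional constant (Calderón--Zygmund). None of these is deep, but the simultaneous treatment of the $v$- and $\theta$-differences through the buoyancy coupling is the one place where the Boussinesq structure must be handled with care rather than quoted from the Navier--Stokes literature.
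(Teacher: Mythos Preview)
Your proposal follows the Isett-style gluing (solve the \emph{exact} Boussinesq system from a grid of initial times, glue with a partition of unity), whereas the paper follows the Cheskidov--Luo variant from \cite{serrin准则luo}: on each $[t_i,t_{i+1}]$ one solves the \emph{corrector} system \eqref{e:gluing} for $(v_i,\phi_i)$ with zero initial data and forcing $(-\operatorname{div}R,-\operatorname{div}S)$, then sets $\bar u=u+\sum_i\chi_i v_i$, $\bar\theta=\theta+\sum_i\chi_i\phi_i$, where the cutoffs $\chi_i$ have \emph{disjoint} supports and do not form a partition of unity. The two framings are equivalent at the PDE level (the paper's $(v_i,\phi_i)$ is exactly your exact solution minus $(u,\theta)$), but the corrector presentation makes the universal-constant estimate transparent: applying $\mathcal R$ to \eqref{e:gluing} and using $L^r$-boundedness of the heat semigroup gives $\|\mathcal R v_i\|_{L^\infty(t_i,t_{i+1};L^r_x)}\le C_r\|R\|_{L^1(t_i,t_{i+1};L^r_x)}+C_{r,u,\theta}\delta\bar\tau^\epsilon$ (Proposition~\ref{prop of vi}), and summing in $i$ gives $\|\bar R\|_{L^1_TL^r_x}\le C\|R\|_{L^1_TL^r_x}$ directly. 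Your heuristic for this bound is correct in spirit, but the actual mechanism (antidivergence of the corrector controlled by $\|R\|_{L^1_tL^r_x}$ on the local interval, not by high Sobolev norms of $u,\theta$) should be made explicit.

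There is, however, one genuine gap: the grid spacing. You take subintervals of length $\sim\bar\tau$; the paper uses $t_i=i\bar\tau^{\epsilon}$, i.e.\ spacing $\bar\tau^{\epsilon}\gg\bar\tau$. With spacing $\bar\tau$, the $\le\tau^{-\epsilon}$ components of $I$ (each of length $5\tau$) generate on the order of $\tau^{1-\epsilon}\bar\tau^{-1}$ overlap regions, which for $\bar\tau\ll\tau$ is much larger than the $\bar\tau^{-\epsilon}$ intervals allowed by well-preparedness at scale $\bar\tau$. Your proposed fix (``$\lesssim\bar\tau^{-\epsilon'}$ for a suitable relabeling'') does not work: the exponent $\epsilon$ is fixed once and for all in the iteration---it is the Hausdorff-dimension parameter of Proposition~\ref{mainProp}---and cannot be adjusted between stages. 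With the correct spacing $\bar\tau^{\epsilon}$, the number of grid points is at most $T\bar\tau^{-\epsilon}$, so $\bar I$ consists of at most $\bar\tau^{-\epsilon}$ intervals of length $5\bar\tau$, as required. Note this also forces the local solutions (correctors or exact) to exist on intervals of length $\bar\tau^{\epsilon}$, not just $\bar\tau$; this is still available by short-time existence once $\bar\tau$ is small.
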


\subsection{Gluing procedure}

Firstly, we construct $(\bar{u},\bar{\theta},\bar{R},\bar{S})$. Let $\bar{\tau}>0$ and $\epsilon\in(0,1)$, we define
\begin{gather*}
    t_i=i\bar{\tau}^\epsilon,\quad\forall i\in [0,\frac{T}{\bar{\tau}^\epsilon}].
\end{gather*}
Here we assume $\frac{T}{\bar{\tau}^\epsilon}$ is always an integer without loss of generality.
For any $i\in [0,\frac{T}{\bar{\tau}^\epsilon}-1]$, let $v_i:[t_i,t_{i+1}]\xrightarrow{}\mathbb{T}^d$ and $\phi_i:[t_i,t_{i+1}]\xrightarrow{}\mathbb{T}$ be the smooth solutions of the following Boussinesq system
\begin{equation}\label{e:gluing}
    \begin{cases}
    \partial_tv_i+\text{div}(v_i\otimes u)+\text{div}(u\otimes v_i)+\text{div}(v_i\otimes v_i)+\nabla p_i+(-\Delta)^{\alpha} v_i=\phi _ie_d-\text{div}R, \quad\quad \\
    \text{div}\,v_i=0,\\
    \partial_t\phi_i+\text{div}(u\phi_i)+\text{div}(v_i\theta)+\text{div}(v_i\phi_i)+(-\Delta)^{\alpha} \phi_i=-\text{div}S,\\
    v_i(t_i,x)=\phi_i(t_i,x)=0.
    \end{cases}
\end{equation}

The next step involves gluing $\{v_i\}$ or $\{\phi_i\}$ together to form a new solution: this new solution is the exact solution of the Boussinesq equation on the majority of $[0,T]$, with the error being supported on multiple disjoint small subintervals. We define smooth cutoff functions $\{\chi_i:[0,T]\rightarrow[0,1]\}$ as follows.

For any $i\in [1,\frac{T}{\bar{\tau}^\epsilon}-2]$,
\begin{gather*}
    \chi_i(t)\triangleq\begin{cases}
        1\quad\text{if}\,t_i+\bar{\tau}\leq t\leq t_{i+1}-\bar{\tau},
        \\0\quad\text{if}\,t\leq t_i+\frac{\bar{\tau}}{2}\,\text{or}\,t\geq t_{i+1}-\frac{\bar{\tau}}{2},
    \end{cases}
\end{gather*}
and
\begin{gather*}
    \chi_0(t)\triangleq\begin{cases}
        1\quad\text{if}\,0\leq t\leq t_{1}-\bar{\tau},
        \\0\quad\text{if}\,t\geq t_{1}-\frac{\bar{\tau}}{2},
    \end{cases}
\end{gather*}
and
\begin{gather*}
    \chi_{\frac{T}{\tau^\epsilon}-1}(t)\triangleq\begin{cases}
        1\quad\text{if}\,t_{\frac{T}{\tau^\epsilon}-1}+\bar{\tau}\leq t\leq1,
        \\0\quad\text{if}\,t\leq t_{\frac{T}{\tau^\epsilon}-1}+\frac{\bar{\tau}}{2}.
    \end{cases}
\end{gather*}

We define the new velocity 
\begin{gather*}
\bar{u}\triangleq u+\sum_i\chi_iv_i,\label{defi of baru}
\end{gather*}
and the new temperature
\begin{gather*}
\bar{\theta}\triangleq\theta+\sum_i\chi_i\phi_i.\label{defi of bartheta}
\end{gather*}
Using (\ref{e:gluing}), we have
\begin{align*}
    \partial_t\bar{u}-\Delta^{\alpha}\bar{u}+\text{div}(\bar{u}\otimes\bar{u})+\nabla p =&\theta e_d+\text{div}R+(\partial_t-\Delta^{\alpha})(\sum_i\chi_iv_i)\nonumber\\ &+\sum_i\chi_i\left(\text{div}(u\otimes v_i)+\text{div}(v_i\otimes u)\right)+\sum_i\chi_i^2\text{div}(v_i\otimes v_i)\nonumber\\
    =&\theta e_d+\text{div}R+\sum_i\partial_t\chi_i\cdot v_i\nonumber\\&+\sum_i({\chi_i}^2-\chi_i)\text{div}(v_i\otimes v_i)+\sum_i\chi_i(-\text{div}R+\theta_ie_d)-\sum_i\chi_i\nabla q_i.\label{middle equation}
\end{align*}
Thus, we introduce new concentrated Reynolds stress and new pressure as
\begin{gather*}
    \bar{R}\triangleq(1-\sum_i\chi_i)R+\mathcal{R}(\sum_i\partial_t\chi_i\cdot v_i)+\sum_i(\chi_i^2-\chi_i)v_i\otimes v_i,
    \\ \bar{p}\triangleq p+\sum
    _i\chi_iq_i.
\end{gather*}
Here $\mathcal{R}:C^\infty(\mathbb{T}^d;\mathbb{R}^d)\rightarrow C^\infty(\mathbb{T}^d;S^{d\times d})$ is an antidivergence operator (see \ref{def of antidiv}). 
Therefore, we have 
\begin{gather*}
\begin{cases}
    \partial_t\bar{u}+(-\Delta)^{\alpha}\bar{u}+\text{div}(\bar{u}\otimes\bar{u})+\nabla \bar{p}=\bar{\theta}e_d+\text{div}\bar{R},\\
    \text{div}\,\bar{u}=0.
\end{cases}
\end{gather*}

Following the same procedure, we define
\begin{gather*}
    \bar{S}\triangleq(1-\sum_i\chi_i)S+\mathcal{G}(\sum_i\partial_t\chi_i\cdot\theta_i)+\sum_i(\chi_i^2-\chi_i)v_i\cdot\theta_i.
\end{gather*}
Here $\mathcal{G}:C^\infty(\mathbb{T}^d;\mathbb{R})\rightarrow C^\infty(\mathbb{T}^d;\mathbb{R}^d)$ is another type of antidivergence operator (see \ref{def of antidiv}). Finally, $(\bar{u},\bar{\theta},\bar{R},\bar{S})$ satisfy the Boussinesq-Renolds equation:
\begin{gather*}
    \begin{cases}
        \partial_t\bar{u}+\text{div}(\bar{u}\otimes \bar{u})+\nabla \bar{p}+(-\Delta)^{\alpha} \bar{u}=\bar{\theta} e_d+\text{div}\bar{R}, \quad\quad \\
    \text{div}\,\bar{u}=0,\\
    \partial_t\bar{\theta}+\text{div}(\bar{u}\bar{\theta})+(-\Delta)^{\alpha} \bar{\theta}=\text{div}\bar{S}. 
    \end{cases}
\end{gather*}

\subsection{Proof of Proposition \ref{prop of gluing}}
In the above subsection, we define $(\bar{u},\bar{\theta},\bar{R},\bar{S})$. In this part, we prove the proposition \ref{prop of gluing}. First, we present an additional proposition that quantifies the corrector $v_i$ and $\phi_i$ with respect to the time scale $\bar{\tau}$ as well as the forcings $-\text{div}R$ and $-\text{div}S$.
\begin{prop}\label{prop of vi}
    Let $d\geq2$, $(u,\theta,R,S)$ be smooth functions, $1<r<\infty$ and $\delta>0$. There exists a universal constant $C_r$ depending only on $r$ and dimension $d$ such that the following holds.
    
    If $\bar{\tau}$ is sufficiently small, then the smooth solutions $(v_i,\phi_i)$ to (\ref{e:gluing}) on $[t_i,t_{i+1}]$ satisfy 
    \begin{gather}
        \|(v_i,\phi_i)\|_{L^\infty(t_i,t_{i+1};H^d_x(\mathbb{T}^d))}\leq\delta,\label{smallestimate}\\
        \|\mathcal{R}v_i\|_{L^\infty(t_i,t_{i+1};L^r(\mathbb{T}^d))}\leq C_r\|R\|_{L^1(t_i,t_{i+1};L^r(\mathbb{T}^d))}+C_{r,u,\theta}\delta\bar{\tau}^\epsilon,\label{lp of Rv}\\
        \|\mathcal{G}\phi_i\|_{L^\infty(t_i,t_{i+1};L^r(\mathbb{T}^d))}\leq C_r\|S\|_{L^1(t_i,t_{i+1};L^r(\mathbb{T}^d))}+C_{r,u,\theta}\delta\bar{\tau}^\epsilon,\label{lp of Gphi}
    \end{gather}
    here the constant $C_{r,u,\theta}$ depends only on $r$, $u$ and $\theta$.
\end{prop}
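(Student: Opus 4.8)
The plan is to estimate $(v_i,\phi_i)$ by treating \eqref{e:gluing} as a perturbation of the linear fractional heat flow around the smooth background $(u,\theta)$, with zero initial data at $t=t_i$ and forcing $-\mathrm{div}\,R$, $-\mathrm{div}\,S$. Writing the mild formulation via Duhamel's formula with the semigroup $e^{-(t-t_i)(-\Delta)^\alpha}$ and the Leray projection $\mathbb{P}$, one gets a coupled integral system in which every term other than the forcing carries at least one factor of $(v_i,\phi_i)$. Over a time interval of length $\bar\tau^\epsilon$ the contraction constant from those bilinear and transport terms is $O(\bar\tau^\epsilon)$ times constants depending on $\|u\|_{C^k}$, $\|\theta\|_{C^k}$; hence for $\bar\tau$ small enough a standard fixed-point argument in $C([t_i,t_{i+1}];H^d_x)$ produces a unique solution whose size is controlled by the forcing alone. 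Since $\int_{t_i}^{t_{i+1}}\|R(s)\|_{H^{d}}\,ds \lesssim \|R\|_{C^0_{t,x}}\bar\tau^\epsilon \to 0$, this yields \eqref{smallestimate}; more precisely one propagates the bound $\|(v_i,\phi_i)(t)\|_{H^d_x}\le C(\|R\|_{L^1_tH^d_x}+\|S\|_{L^1_tH^d_x}) + C_{u,\theta}\bar\tau^\epsilon(\sup\|(v_i,\phi_i)\|_{H^d_x})$ by Gr\"onwall, absorbing the last term.

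For \eqref{lp of Rv} and \eqref{lp of Gphi} I would apply the antidivergence operator $\mathcal R$ (resp.\ $\mathcal G$) directly to \eqref{e:gluing}. The key algebraic point is that $\mathcal R$ inverts $\mathrm{div}$ up to harmless lower-order/mean-value corrections, so $\mathcal R(\mathrm{div}\,R) = R$ modulo its spatial average (which vanishes, or is removed by the construction), and $\mathcal R$ is bounded $L^r\to L^r$ for $1<r<\infty$ by Calder\'on--Zygmund theory. Commuting $\mathcal R$ past $\partial_t$ and $(-\Delta)^\alpha$ and integrating the evolution equation for $v_i$ in time from $t_i$ (where $v_i=0$), one obtains
\begin{gather*}
\mathcal R v_i(t) = -\int_{t_i}^{t}\Big[\mathcal R(-\Delta)^\alpha v_i + \mathcal R\,\mathbb P\,\mathrm{div}(u\otimes v_i + v_i\otimes u + v_i\otimes v_i) - \mathcal R(\phi_i e_d)\Big]\,ds \;-\;\int_{t_i}^t R\,ds\;+\;(\text{pressure terms}),
\end{gather*}
where the pressure contribution is annihilated by $\mathbb P$. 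Taking $L^r_x$ norms, using boundedness of $\mathcal R\,\mathbb P\,\mathrm{div}$, $\mathcal R(-\Delta)^\alpha$ (here one needs $\mathcal R$ to gain the extra derivative, which holds since $\mathcal R$ is order $-1$ and $2\alpha < d+1$ keeps things subcritical enough — or one simply bounds $(-\Delta)^\alpha v_i$ in $L^r$ using the $H^d_x$ control from \eqref{smallestimate} and Sobolev embedding), and the already-established smallness \eqref{smallestimate} of $v_i,\phi_i$ in $H^d_x \hookrightarrow C^0_x \subset L^r_x$, every term except $\int_{t_i}^t R\,ds$ is bounded by $C_{r,u,\theta}\,\delta\,\bar\tau^\epsilon$; the remaining term is $\le \|R\|_{L^1(t_i,t_{i+1};L^r_x)}$. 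The estimate \eqref{lp of Gphi} for $\mathcal G\phi_i$ is entirely parallel, using that $\mathcal G$ is likewise an order $-1$ Calder\'on--Zygmund-type operator bounded on $L^r$.

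The main obstacle is the term $\mathcal R(-\Delta)^\alpha v_i$ (and its temperature analogue): the fractional dissipation is a genuine loss of $2\alpha$ derivatives, while $\mathcal R$ only recovers one, so for $\alpha>\tfrac12$ one cannot close purely at the level of $\mathcal R v_i$ in $L^r$ without borrowing regularity. The resolution is precisely the choice of working in $H^d_x$ in \eqref{smallestimate}: since $d$ is large, $v_i \in H^d_x$ gives $(-\Delta)^\alpha v_i \in H^{d-2\alpha}_x$, and $d - 2\alpha > d - (d+1) = -1$ together with $2\alpha<d+1$ ensures $H^{d-2\alpha}_x \hookrightarrow L^r_x$ for the relevant finite $r$ (choosing $d$ a bit larger if needed, as the proposition allows since $d\ge 2$ and $H^d$ already embeds well), so $\|\mathcal R(-\Delta)^\alpha v_i\|_{L^r_x} \lesssim \|v_i\|_{H^d_x} \le \delta$. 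Care is also needed that the implicit constants in the $H^d_x$ fixed-point bound do not blow up as $\bar\tau\to 0$ — they do not, because the only $\bar\tau$-dependence enters through the interval length $\bar\tau^\epsilon$ multiplying the nonlinear/transport terms, which works in our favor. Everything else is routine: Calder\'on--Zygmund bounds for $\mathcal R,\mathcal G$ on $L^r$, Sobolev embedding, Gr\"onwall, and the fact that the background $(u,\theta)$ is smooth on a compact set so all its norms are finite constants.
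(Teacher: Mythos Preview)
Your treatment of \eqref{smallestimate} via a fixed-point/energy argument on a short interval matches the paper's one-line justification. The real divergence is in \eqref{lp of Rv}--\eqref{lp of Gphi}. After applying $\mathcal R$ to the $v_i$-equation, the paper \emph{keeps the dissipation on the left}: since $\mathcal R$ commutes with $\partial_t+(-\Delta)^\alpha$, one obtains a fractional heat equation for $\mathcal R v_i$ with right-hand side $-\mathcal R\mathbb P_H\operatorname{div}(v_i\otimes u+u\otimes v_i+v_i\otimes v_i)-\mathcal R\mathbb P_H\operatorname{div}R+\mathcal R\mathbb P_H(\phi_i e_d)$, and then uses only that $e^{-t(-\Delta)^\alpha}$ is bounded on $L^r$ to get $\|\mathcal R v_i\|_{L^\infty_tL^r_x}\le C_r\|\text{RHS}\|_{L^1_tL^r_x}$. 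No term of the form $\mathcal R(-\Delta)^\alpha v_i$ ever appears, so there is no derivative loss to fight and no restriction on $\alpha$ or $r$.

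By contrast, you integrate $\partial_t(\mathcal R v_i)$ in time and move $(-\Delta)^\alpha$ to the right, which is exactly what generates the problematic term you then try to control. Your proposed fix via $H^d_x$ regularity does not close for all $1<r<\infty$: with $\mathcal R$ gaining one derivative you need $H^{d-2\alpha+1}\hookrightarrow L^r$, which for large $r$ forces $\alpha\le (d+2)/4$, strictly smaller than the allowed $\alpha<(d+1)/2$. Your remark about ``choosing $d$ a bit larger'' conflates the spatial dimension with the Sobolev index; $d$ is fixed. One can rescue your route by first proving smallness in $H^N_x$ for $N$ large (the same contraction argument gives this for $\bar\tau$ small enough), but this is extra work the paper avoids entirely by using the semigroup. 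Also note that $\mathcal R\mathbb P_H\operatorname{div}R$ is not literally $R$ minus its mean; it is a Calder\'on--Zygmund image of $R$, which is all you need for $\|\cdot\|_{L^r}\le C_r\|R\|_{L^r}$.
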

\begin{proof}
    Since $\bar{\tau}$ is sufficiently small, (\ref{smallestimate}) is obtained directly by using the standard energy estimate and continuity method. In the following, we focus on the demonstration of (\ref{lp of Rv}).
    
    Denote by $\mathbb{P}_H$ the Leray projection operator. Applying antidivergence operator $\mathcal{R}$ to both sides of the first equation in (\ref{e:gluing}), we have
    \begin{gather*}
        (\partial_t+(-\Delta)^{\alpha})\mathcal{R}v_i=-\mathcal{R}\mathbb{P}_H\text{div}(v_i\otimes u+u\otimes v_i+v_i\otimes v_i)-\mathcal{R}\mathbb{P}_H\text{div}R+\mathcal{R}\mathbb{P}_H(\phi_ie_d)
    \end{gather*}

    By (\ref{smallestimate}) and the fact that $\mathcal{R}\mathbb{P}_H\text{div}$ is a Calderon-Zygmund operator on $L^r(\mathbb{T}^d)$ and $t_{i+1}-t_i=\bar{\tau}^\epsilon$, we obtain
    \begin{align}\label{shizi1}
        &\|\mathcal{R}\mathbb{P}_H\text{div}(v_i\otimes u+u\otimes v_i+v_i\otimes v_i)+\mathcal{R}\mathbb{P}_H\text{div}R\|_{L^1(t_i,t_{i+1};L^r(\mathbb{T}^d))}\nonumber\\
        \leq &C_r(\|v_iu\|_{L^1(t_i,t_{i+1};L^r(\mathbb{T}^d))}+\|v_i^2\|_{L^1(t_i,t_{i+1};L^r(\mathbb{T}^d))}+\|R\|_{L^1(t_i,t_{i+1};L^r(\mathbb{T}^d))})\nonumber\\
        \leq& C_r(\bar{\tau}^\epsilon\|u\|_{L^\infty_{t,x}}\|v_i\|_{L^\infty_{t,x}}+\bar{\tau}^\epsilon\|v_i\|_{L^\infty_{t,x}}^2+\|R\|_{L^1(t_i,t_{i+1};L^r(\mathbb{T}^d))})\\
        \leq & C_r\|R\|_{L^1(t_i,t_{i+1};L^r(\mathbb{T}^d))}+C_{r,u,\theta}\delta\bar{\tau}^\epsilon.
    \end{align}

    Due to Theorem \ref{theo of antidiv} and the fact that $\phi_i$ has zero mean, we obtain
    \begin{gather}\label{shizi2}
        \|\mathcal{R}\mathbb{P}_H(\phi_ie_d)\|_{L^1(t_i,t_{i+1};L^r(\mathbb{T}^d))}\leq C_r\|\phi_i\|_{L^1(t_i,t_{i+1};L^r(\mathbb{T}^d))}\leq C_{r,u,\theta}\delta\bar{\tau}^\epsilon.
    \end{gather}

    Combining (\ref{shizi1}), (\ref{shizi2}) and property of the Heat equation, it follows that
    \begin{align*}
        &\|\mathcal{R}v_i\|_{L^\infty(t_i,t_{i+1};L^r(\mathbb{T}^d))}\\
        \leq &C_r\|-\mathcal{R}\mathbb{P}_H\text{div}(v_i\otimes u+u\otimes v_i+v_i\otimes v_i)-\mathcal{R}\mathbb{P}_H\text{div}R+\mathcal{R}\mathbb{P}_H(\phi_ie_d)\|_{L^1(t_i,t_{i+1};L^r(\mathbb{T}^d))}\\
        \leq &C_r\|R\|_{L^1(t_i,t_{i+1};L^r(\mathbb{T}^d))}+C_{r,u,\theta}\delta\bar{\tau}^\epsilon,
    \end{align*}
which gives (\ref{lp of Rv}). The proof of (\ref{lp of Gphi}) is similar, here we omit the detail.
\end{proof}

Finally, we complete the proof of Proposition \ref{prop of gluing}.
\begin{proof}[\textbf{Proof of Proposition \ref{prop of gluing}}]
    Let $\bar{\tau}$ be sufficiently small such that $10\bar{\tau}^\epsilon<\tau$ and $\bar{\tau}<\frac{\tau}{2}$. We define the set on time axis $\bar{I}$ in Proposition \ref{prop of gluing} as follows,
    \begin{gather*}
        \bar{I}\triangleq\bigcup_{i\in\Sigma}\left[t_i-\frac{5\bar{\tau}}{2},t_i+\frac{5\bar{\tau}}{2}\right].
    \end{gather*}
    where $\Sigma=\{i\in\mathbb{Z}:1\leq i\leq \frac{T}{\bar{\tau}^\epsilon}\,\,\text{and}\,\, (R,S)\neq 0\,\text{on}\,[t_{i-1},t_i+\frac{5\bar{\tau}}{2}]\cap[0,T]\}$.
    Thus the temporal support and the well-preparedness of $(\bar{u},\bar{\theta},\bar{R},\bar{S})$ follow easily from the definition of $\{\chi_i\}$ and $(\bar{R},\bar{S})$. We only need to verify $\bar{I}\subset I$ and we argue by contradiction.

    If there exists $a\in \bar{I}\cap I^c$, then there exists $i\in \Sigma$ such that
    \begin{gather*}
     a\in[t_i-\frac{5\bar{\tau}}{2},t_i+\frac{5\bar{\tau}}{2}]\subset[t_{i-1},t_i+\frac{5\bar{\tau}}{2}],\\
        (R,S)|_{[t_{i-1},t_i+\frac{5\bar{\tau}}{2}]}\neq0,\\
        (R,S)|_{(a-\tau,a+\tau)}=0,
    \end{gather*}
    which contracts the fact that $[t_{i-1},t_i+\frac{5\bar{\tau}}{2}]\subset(a-\tau,a+\tau)$ due to $10\bar{\tau}^\epsilon<\tau$.
    
    In the following, we focus on the estimates of $(\bar{u},\bar{\theta},\bar{R},\bar{S})$. Due to the definition of $\bar{R}$ and the triangle inequality, we get
\begin{align*}
    \|\bar{R}\|_{L^1_TL^r_x}\leq\|1-\sum_i\chi_i\|_{L^\infty_T}\|R\|_{L^1_TL^r_x}+\sum_i\|\partial_t\chi_i\|_{L^1_T}\|\mathcal{R}v_i\|_{L^\infty_TL^r_x}+\sum_i\|\chi_i-\chi_i^2\|_{L^1_T}\|v_i\otimes v_i\|_{L^\infty_TL^r_x}.
\end{align*}
By the definition of $\{\chi_i\}$, we have the following trivial bounds in time:
\begin{gather}
    \|1-\sum_i\chi_i\|_{L^\infty_T}\leq1,\label{901}
\end{gather}
and for any $i\in[0,\frac{T}{\bar{\tau}^\epsilon}-1]$,
\begin{gather}
    \|\partial_t\chi_i\|_{L^1_T}\lesssim1,\label{902}\\
    \|\chi_i-\chi_i^2\|_{L^1_T}\lesssim\bar{\tau}\label{903}.
\end{gather}
Combing (\ref{901}), (\ref{902}), (\ref{903}) and Proposition \ref{prop of vi}, we obtain
\begin{align*}
    \|\bar{R}\|_{L^1_TL^r_x}\lesssim\|R\|_{L^1_TL^r_x}+\sum_i(C_r\|R\|_{L^1(t_i,t_{i+1};L^r(\mathbb{T}^d))}+C_{r,u,\theta}\delta\bar{\tau}^\epsilon)+\sum_i\delta^2\bar{\tau}.
\end{align*}
Using the fact that $\sum_i\bar{\tau}\leq\sum_i\bar{\tau}^\epsilon\leq T$ and choosing $\bar{\tau}$ sufficiently small, we get
\begin{gather*}
    \|\bar{R}\|_{L^1_TL^r_x}\leq C\|R\|_{L^1_TL^r_x}.
\end{gather*}

By the definition of $\{\chi_i\}$, (\ref{defi of baru}) and Proposition \ref{prop of vi}, we have
\begin{gather*}
    \|\bar{u}-u\|_{L^\infty_TH^d_x}\leq\|\sum_i\chi_iv_i\|_{L^\infty_TH^d_x}\leq \delta.
\end{gather*}

Using the same method, we can obtain the estimate of $(\bar{\theta},\bar{S})$, the proof is omitted herein for brevity.

\end{proof}

\section{Convex integration in space-time}\label{section of convex}
This section concentrates on reducing the size of the Reynolds stress by using the convex integration method. The main step is to construct a suitable velocity perturbation $\omega$ and a suitable temperature perturbation $\kappa$ such that
\begin{gather*}
    \tilde{u}=\bar{u}+\omega,\quad\tilde{\theta}=\bar{\theta}+\kappa
\end{gather*}
solve the Boussinesq-Reynolds equation with smaller Reynolds stress $\tilde{R}$ and $\tilde{S}$. Here we present the main proposition of this section.

\begin{prop}\label{convexi}
    Let $d\geq2$ be the dimension, $1\leq\alpha<\frac{d+1}{2}$, $1\leq p<\frac{2\alpha}{2\alpha-1}$ and $q<\infty$, there exists $r>1$ such that we have the following.

    Let $\delta>0$ and $(\bar{u},\bar{\theta},\bar{R},\bar{S})$ be a well-prepared smooth solution of (\ref{e:Boussinesq-Reynold}) for the set $\bar{I}$ and the length scale $\bar{\tau}$, there exists another well-prepared smooth solution $(\tilde{u},\tilde{R},\tilde{\theta},\tilde{S})$ of (\ref{e:Boussinesq-Reynold}) for the same set $\bar{I}$ and the same length scale $\bar{\tau}$ such that the following holds.

    \noindent
    (1) The new Reynolds stress error $\tilde{R}$ and $\tilde{S}$ satisfies
    \begin{gather*}
        \|(\tilde{R},\tilde{S})\|_{L^1_TL^r_x}\leq\delta.
    \end{gather*}
    \noindent
    (2) The velocity perturbation $\omega\triangleq\tilde{u}-\bar{u}$ and the temperature perturbation $\kappa\triangleq\tilde{\theta}-\bar{\theta}$ satisfy
    \begin{gather*}
        \text{supp}_t\,(\omega,\kappa)\subset \bar{I},\\
        \|(\omega,\kappa)\|_{L^2([0,T]\times\mathbb{T^d})}\leq M\|(\bar{R},\bar{S})\|_{L^1([0,T]\times\mathbb{T^d})},\\
        \|(\omega,\kappa)\|_{{L^p_TL^\infty_x}}\leq\delta.
    \end{gather*}    
     Moreover, if $1<\alpha<\frac{d+1}{2}$, we have   
        \begin{gather*}
        \|(\omega,\kappa)\|_{{L^\frac{2\alpha}{2\alpha-1}_TL^q_x}}\leq\delta.
    \end{gather*}
\end{prop}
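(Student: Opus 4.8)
The plan is to implement a single step of the Nash-type convex integration scheme adapted to the Boussinesq system, following the intermittent-jet / temporal building-block strategy of Cheskidov--Luo \cite{serrin准则luo} and its hyperdissipative variant \cite{qupeng}. Fix parameters $\lambda\gg1$ (oscillation frequency), $r_\perp,r_\parallel$ (concentration parameters of the spatial building blocks, chosen so that the blocks live at frequency $\lambda$ but are supported on sets of small measure) and a temporal concentration parameter, together with a large multiplier $\mu$ governing the time oscillation; all of these are powers of $\lambda$ with exponents to be fixed at the end. The perturbations $\omega=\omega_p+\omega_c+\omega_t$ and $\kappa=\kappa_p+\kappa_c+\kappa_t$ are built from the geometric lemma: write $-\bar R=\sum_\xi a_\xi(\bar R)^2\,\xi\otimes\xi$ and $-\bar S=\sum_\zeta b_\zeta(\bar S)\,\zeta$ using the standard decomposition of symmetric matrices (resp. vectors) near the identity (resp. origin), with amplitudes $a_\xi\sim|\bar R|^{1/2}$, $b_\zeta\sim|\bar S|^{1/2}$ after inserting a suitable normalization; multiply by sharp cutoffs $\chi$ in time supported in $\bar I$ (this forces $\mathrm{supp}_t(\omega,\kappa)\subset\bar I$) and by intermittent spatial functions $W_\xi$, together with temporal functions $g_\xi$ with $\fint g_\xi^2=1$ oscillating at speed $\mu$. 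The incompressibility correctors $\omega_c$ and the temporal correctors $\omega_t=-\sum(\mu^{-1}\partial_t g_\xi^2)\,\dots$ (and the analogous $\kappa_t$) are included exactly so that the most dangerous errors cancel.

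Then I would define the new Reynolds stress by collecting the terms that do not cancel and applying the antidivergence operators $\mathcal R$ and $\mathcal G$ from the excerpt. The errors split into the usual families: (i) the \emph{linear error} $\mathcal R\big[(\partial_t+(-\Delta)^\alpha)(\omega_p+\omega_c)\big]$ and its temperature analogue — here the fractional Laplacian costs $\lambda^{2\alpha}$, which is the term the remark flags as the source of the restriction $\alpha<\frac{d+1}{2}$; (ii) the \emph{oscillation error} from $\mathrm{div}(\omega_p\otimes\omega_p)+$ the amplitude/temporal corrections, which is made small by the choice $\fint g_\xi^2=1$ and by taking $\mu$ large and $\lambda$ large so that the ``high-high to low'' interaction is pushed to high frequency and killed by $\mathcal R$; (iii) the \emph{Nash error} $\mathrm{div}(\bar u\otimes\omega_p+\omega_p\otimes\bar u)$ and the buoyancy/transport cross terms $\mathrm{div}(\bar u\kappa_p+\omega_p\bar\theta)$ and $\kappa_p e_d$, again absorbed by spatial intermittency; (iv) \emph{corrector errors} involving $\omega_c,\omega_t,\kappa_c,\kappa_t$, which are lower order. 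The key quantitative inputs are: the $L^2_{t,x}$ size of $\omega_p$ is $\sim\|\bar R\|_{L^1}^{1/2}$ (giving the $L^2\le M\|(\bar R,\bar S)\|_{L^1}$ bound via Cauchy--Schwarz in time over the small support), and the $L^1_TL^r_x$ smallness of the new stress $\tilde R$, for some $r>1$ close to $1$, obtained by making $r_\perp,r_\parallel\to$ their extreme values so that the building blocks are ``as intermittent as possible.''

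The $L^p_TL^\infty_x$ and $L^{2\alpha/(2\alpha-1)}_TL^q_x$ control of the perturbation is the place where the precise exponents enter. Since $W_\xi$ is supported on a set of measure $\sim (r_\perp^{d-1}r_\parallel)$ in space and $g_\xi$ concentrates in time on a set of relative measure $\sim\mu^{-1}$ (or a power thereof), the $L^p_tL^\infty_x$ norm of $\omega_p$ is roughly $\|\bar R\|^{1/2}\cdot(\text{amplitude of }W_\xi\text{ in }L^\infty)\cdot(\text{time concentration})^{1/p}$; one checks that for $p<\frac{2\alpha}{2\alpha-1}$ there is slack, so after fixing all other exponents to control the stress one can still send this quantity below $\delta$ by taking $\lambda$ large. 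The borderline exponent $\frac{2\alpha}{2\alpha-1}$ is exactly where the time-integrability gained from temporal concentration balances the $L^\infty_x$ cost; at that exponent one must trade an $\epsilon$ of spatial integrability, which is why the second bound is only claimed with $L^q_x$, $q<\infty$, and only when $\alpha>1$ (so that there is a genuine time oscillation to exploit). I would close by writing down the final list of inequalities among the exponents of $\lambda$ and checking it is nonempty under $1\le\alpha<\frac{d+1}{2}$, which pins down the admissible $r>1$.

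The main obstacle is step (i): balancing the linear error $\lambda^{2\alpha}$-loss against the gain from intermittency and temporal oscillation. Concretely, the antidivergence of $(\partial_t+(-\Delta)^\alpha)\omega_c$ and of the temporal corrector interacting with dissipation must be shown to be $o(1)$ in $L^1_TL^r_x$ while simultaneously the perturbation stays small in $L^p_TL^\infty_x$; this is a tight window that closes only for $\alpha<\frac{d+1}{2}$, and getting the bookkeeping of $r_\perp,r_\parallel,\mu,\lambda$ exponents exactly right — so that \emph{all} error families are small at once — is the technical heart of the argument. The buoyancy coupling $\kappa e_d$ and the transport term $\mathrm{div}(\omega\,\bar\theta+\bar u\,\kappa)$ add new error terms relative to pure Navier--Stokes, but structurally they are of Nash type and do not worsen the exponent count; the genuinely new check is only that the temperature perturbation $\kappa_p$ can be taken with the \emph{same} intermittency profile as $\omega_p$ so that these cross terms inherit the same smallness.
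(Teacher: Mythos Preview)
Your overall architecture is correct and matches the paper's: principal perturbation from a geometric decomposition, temporal concentration, divergence and temporal correctors, and an error split into oscillation/linear/corrector/far families, with the parameter balancing forced by the $(-\Delta)^\alpha$ linear error being precisely the source of the restriction $\alpha<\frac{d+1}{2}$. Two points of divergence from the paper are worth flagging.

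\textbf{Building blocks.} The paper does \emph{not} use intermittent jets with parameters $r_\perp,r_\parallel$; it uses \emph{concentrated Mikado flows} $\Psi_k^\mu(\sigma x)e_k$ (Proposition~\ref{mikado}) with four scalar parameters: spatial oscillation $\sigma$, spatial concentration $\mu$, temporal oscillation $\nu$, temporal concentration $l$. This is not cosmetic: the statement covers all $d\ge 2$, and Mikado pipes are the natural choice that works uniformly in dimension, whereas the usual jet construction is tied to $d=3$. Your exponent bookkeeping would need to be redone in the Mikado framework; the paper records the admissible window explicitly in Lemma~\ref{lemma of parameters}, separating the cases $\alpha=1$ and $1<\alpha<\frac{d+1}{2}$.

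\textbf{Temperature coupling.} Your sketch treats $\kappa_p$ with its own direction set $\zeta$ and amplitude $b_\zeta\sim|\bar S|^{1/2}$, and your error list never mentions how $\operatorname{div}(\omega_p\kappa_p)+\operatorname{div}\bar S$ is cancelled. In the paper this is the heart of the temperature step: $\kappa^{(p)}=\sum_{k\in\Lambda} b_k\Psi_k^\mu(\sigma x)$ uses the \emph{same} Mikado profiles and the \emph{same} index set $\Lambda$ as $\omega^{(p)}$, with $b_k=g_l f\,\rho^{1/2}\gamma_k(-\bar S/\rho)\big[\Gamma_k(\mathrm{Id}-\bar R/\rho)\big]^{-1}$ chosen so that the diagonal product $a_k b_k$ (not $b_k^2$) reconstructs $-\bar S$ via $\sum_k a_k b_k e_k = -f^2 g_l^2\,\bar S$. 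Without this shared-profile coupling the quadratic term $\omega_p\kappa_p$ produces only far-field interactions and $\bar S$ is never absorbed. Also, there is no $\kappa_c$: the temperature is scalar, and mean-zero is enforced by the projection $\mathbb{P}_{\neq 0}$ on $\kappa^{(p)}$ rather than by a corrector.
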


\subsection{Concentrated Mikado flows}
Firstly, we recall the stationary Mikado flows that are introduced in \cite{6}. They serve as the key spatial building blocks in the convex integration method. What we use are called concentrated Mikado flows that can be found in \cite[Theorem 4.3]{serrin准则luo} or \cite[Section 4.1]{qupeng}. Here we recall $e_k=\frac{k}{|k|}$ for any $k\in\Lambda$.
\begin{prop}\label{mikado}
    Let $d\geq2$ be the dimension and $\Lambda\subset\mathbb{Z}^d$ be the finite set given by Lemma \ref{geometric lemma}. For any $k\in\Lambda$ and $\mu>0$, there exist two smooth functions $\Psi_k^\mu:\mathbb{T}^d\rightarrow\mathbb{R}$ and $\Phi_k^\mu:\mathbb{T}^d\rightarrow\mathbb{R}$ such that the following hold.
    
    \noindent
    (1) For any $k\in\Lambda$, $\Psi_k^\mu$ and $\Phi_k^\mu$ satisfy
    \begin{gather*}
         \Psi_k^\mu=\Delta\Phi_k^\mu.
    \end{gather*}
    
    \noindent
    (2) For any $k\in\Lambda$, $\Psi_k^\mu e_k$ is divergence-free, solves the stationary Euler equations without pressure
    \begin{gather}
        \text{div}(\Psi_k^\mu e_k\otimes\Psi_k^\mu e_k)=0,\label{divpsi=0}
    \end{gather}
    and can be written as a divergence of a skew-symmetric tensor
   \begin{gather}\label{Phi=divomega}
       \Psi_k^\mu e_k=\text{div}\,\Omega_k^\mu,
   \end{gather}
    which $\Omega_k^\mu$ is defined as follows,
   \begin{gather*}
       \Omega_k^\mu\triangleq e_k\otimes\nabla\Phi_k^\mu-\nabla\Phi_k^\mu\otimes e_k
   \end{gather*}

   \noindent
   (3) For any $k\in\Lambda$, $1\leq p\leq \infty$ and $m\geq0$, there holds
   \begin{gather*}
       \mu^{-m}\|\nabla^m\Psi_k^\mu\|_{L^p(\mathbb{T}^d)}\lesssim_m\mu^{\frac{d-1}{2}-\frac{d-1}{p}},\\
       \mu^{-m}\|\nabla^{m}\Phi_k^\mu\|_{L^p(\mathbb{T}^d)}\lesssim_m\mu^{-2+\frac{d-1}{2}-\frac{d-1}{p}},\\
       \fint_{\mathbb{T}^d}(\Psi_k^\mu)^2=1.
   \end{gather*}

   \noindent
   (4) For any $k\neq k'\in\Lambda$ and $1\leq p\leq \infty$, there holds
   \begin{gather*}
       \|\Psi_k^\mu\Psi_{k'}^\mu\|_{L^p(\mathbb{T}^d)}\lesssim\mu^{d-1-\frac{d}{p}}.
   \end{gather*}
   In the above estimates, the implicit constants are uniform with respect to $\mu$.
\end{prop}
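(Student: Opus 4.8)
\medskip
\noindent\emph{Proof plan.}
The plan is to take $\Psi_k^\mu=\Delta\Phi_k^\mu$ for a scalar potential $\Phi_k^\mu$ that is constant along $e_k$ and supported in a tube of transverse width $\sim\mu^{-1}$ around the closed geodesic of $\mathbb{T}^d$ in the rational direction $k$, obtained by periodising a single rescaled bump. Since $\Lambda$ is the fixed finite set from Lemma~\ref{geometric lemma}, I will let all implicit constants depend on $\Lambda$ and work with $\mu\ge1$ (the only regime used later). The first and crucial step is a change of variables: after replacing each $k$ by the primitive vector on its ray, I would complete $\{k\}$ to a $\mathbb{Z}$-basis $\{k,f_2,\dots,f_d\}$ of $\mathbb{Z}^d$, so that $A_k=[k|f_2|\cdots|f_d]\in\mathrm{GL}_d(\mathbb{Z})$ and the skew coordinates $y=A_k^{-1}x$ descend to $\mathbb{T}^d$ with $\mathbb{R}k\mapsto\{y_2=\dots=y_d=0\}$. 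Functions constant along $e_k$ are precisely those independent of $y_1$; for such functions the flat Laplacian $\Delta_x$ is \emph{not} the transverse Laplacian but a fixed constant-coefficient operator $L_k=\sum_{l,m\ge2}G_{lm}\partial_{y_l}\partial_{y_m}$, $G=(A_k^\top A_k)^{-1}$, which is elliptic on $\mathbb{T}^{d-1}_{(y_2,\dots,y_d)}$ because the relevant block of $G$ is a principal submatrix of a positive-definite matrix.

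Next I would fix a nonzero $g\in C_c^\infty(B_{1/2}\subset\mathbb{R}^{d-1})$ and set $g_\mu(y')=c_k\mu^{\frac{d-1}{2}-2}g(\mu y')$ with $c_k=\|L_kg\|_{L^2(\mathbb{R}^{d-1})}^{-1}$; since $\text{supp}\,g_\mu\subset B_{1/(2\mu)}$ the $\mathbb{Z}^{d-1}$-translates of $g_\mu$ are pairwise disjoint and periodise to a smooth $\varphi^\mu$ on $\mathbb{T}^{d-1}$. Let $\Phi_k^\mu$ be the lift of $\varphi^\mu$ to $\mathbb{T}^d$ and $\Psi_k^\mu\triangleq\Delta\Phi_k^\mu$, which by the previous step is the lift of $L_kg_\mu$; then property~(1) holds by construction. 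Because $x\mapsto y$ distorts $L^p$-norms and gradients only by $\Lambda$-dependent constants, the elementary rescaling $\|\nabla^m g_\mu\|_{L^p(\mathbb{R}^{d-1})}\lesssim_m\mu^{\frac{d-1}{2}-2+m-\frac{d-1}{p}}$ (with two extra powers of $\mu$ after applying $L_k$) yields
\begin{gather*}
\mu^{-m}\|\nabla^m\Psi_k^\mu\|_{L^p(\mathbb{T}^d)}\lesssim_m\mu^{\frac{d-1}{2}-\frac{d-1}{p}}\quad\text{and}\quad\mu^{-m}\|\nabla^m\Phi_k^\mu\|_{L^p(\mathbb{T}^d)}\lesssim_m\mu^{-2+\frac{d-1}{2}-\frac{d-1}{p}},
\end{gather*}
while $\fint_{\mathbb{T}^d}(\Psi_k^\mu)^2=\int_{\mathbb{R}^{d-1}}(L_kg_\mu)^2=c_k^2\|L_kg\|_{L^2}^2=1$ by the choice of $c_k$; this is (3).

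For (2): since $\Psi_k^\mu$ is independent of the $e_k$-variable, $\text{div}(\Psi_k^\mu e_k)=\partial_{e_k}\Psi_k^\mu=0$, and (\ref{divpsi=0}) holds because $\text{div}\big((\Psi_k^\mu)^2e_k\big)=\partial_{e_k}(\Psi_k^\mu)^2=0$; for (\ref{Phi=divomega}) one checks componentwise that $\text{div}(e_k\otimes\nabla\Phi_k^\mu-\nabla\Phi_k^\mu\otimes e_k)=(\Delta\Phi_k^\mu)e_k-\nabla(\partial_{e_k}\Phi_k^\mu)=\Psi_k^\mu e_k$, again using $\partial_{e_k}\Phi_k^\mu=0$. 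For (4), $\text{supp}\,\Psi_k^\mu$ is an $O(\mu^{-1})$-neighbourhood of the closed geodesic in direction $k$, so for $k\ne k'$ the two supports overlap only within $O(\mu^{-1})$ of the intersection set of the two transverse geodesics, which is finite with cardinality $\le C(\Lambda)$; near each crossing the product is supported in a parallelepiped of volume $\lesssim\mu^{-d}$ on which $|\Psi_k^\mu\Psi_{k'}^\mu|\lesssim\mu^{d-1}$, hence $\|\Psi_k^\mu\Psi_{k'}^\mu\|_{L^p(\mathbb{T}^d)}\lesssim(\mu^{-d})^{1/p}\mu^{d-1}=\mu^{d-1-\frac{d}{p}}$.

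I expect the one genuinely delicate point to be conceptual: recognising that the coordinates adapted to $e_k$ are skew, so that the flat Laplacian becomes the elliptic operator $L_k$ rather than $\Delta_{y'}$, and then carrying the normalisation $\fint(\Psi_k^\mu)^2=1$ and every derivative bound through this change of variables with constants uniform over the finite set $\Lambda$. The only other fussy ingredient is the elementary count, entering (4), of how many times two rational closed geodesics of $\mathbb{T}^d$ can intersect; everything else reduces to the rescaling of a fixed compactly supported bump and the one-line algebra above.
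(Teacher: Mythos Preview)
The paper does not prove this proposition; it simply refers to two external references (Cheskidov--Luo and Li--Qu--Zeng--Zhang) for the construction. Your proposal supplies a self-contained argument that is essentially the standard one: a scalar potential concentrated on a $\mu^{-1}$-tube about the closed rational geodesic in direction $k$, set up via a unimodular change of coordinates that straightens $e_k$, with all estimates read off from the rescaling of a single bump. Parts (1)--(3) go through exactly as you describe; the skew-coordinate observation you flag (that the flat $\Delta_x$ becomes a fixed elliptic operator $L_k$ in the transverse variables, not $\Delta_{y'}$) is indeed the only genuine subtlety there, and your normalisation $c_k=\|L_kg\|_{L^2}^{-1}$ handles $\fint(\Psi_k^\mu)^2=1$ correctly because $A_k\in\mathrm{GL}_d(\mathbb{Z})$ preserves Lebesgue measure.

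For (4), your argument via the finite intersection count of two transverse rational closed geodesics is correct and gives the stated bound, but it tacitly assumes that distinct $k,k'\in\Lambda$ determine non-parallel directions $e_k\ne\pm e_{k'}$; otherwise the two tubes coincide and you only recover the weaker H\"older bound $\mu^{d-1-(d-1)/p}$. This non-parallelism is how $\Lambda$ is always chosen in the geometric lemma, but you should say so explicitly. An alternative common in the literature is to translate each tube by a $k$-dependent shift $p_k\in\mathbb{T}^d$ so that the supports of the $\Psi_k^\mu$ are pairwise disjoint, which makes (4) trivial; either route suffices here.
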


Denote 
\begin{gather*}
    W_k^\mu\triangleq\Psi_k^\mu e_k.
\end{gather*}
$\{W_k^\mu\}_k$ are called the concentrated Mikado flows and $\mu$ is  spatial concentration parameter.

\subsection{Temporal intermittency and space-time cutoffs}
For obtaining estimates in $L^p_TL^\infty_x$ or $L^{\frac{2\alpha}{2\alpha-1}}_TL_x^q$ estimates, implementing temporal concentration during the construction of perturbation is a necessary step.

Let $g\in C^\infty_c((0,T))$ and $l>0$ be a large constant which will be determined later. We define a T-periodic function $g_l:\mathbb{R}\rightarrow\mathbb{R}$ that satisfies
\begin{gather*}
    g_l(t)=l^{\frac{1}{2}}g(lt),\quad\forall t\in[0,T].
\end{gather*}

In order to cancel the high temporal frequency due to the temporal concentration, we define the following temporal corrector.
\begin{gather*}
    h_l(t)\triangleq\int^t_0g_l^2(s)-1\,ds,\quad\forall t\in[0,T].
\end{gather*}

From the above definition, we can easily obtain the following estimate.
\begin{prop}\label{prop of gl}
    For any $p\in[1,\infty]$ and $\nu,m\in \mathbb{N}$, we have
    \begin{align}
        \|\partial_t^m\left(g_l(\nu\cdot)\right)\|_{L^p_T}&\lesssim (vl)^m l^{\frac{1}{2}-\frac{1}{p}},\\
        \|g_l(\nu\cdot)\|_{L^2_T}&=1,\\
        \|h_l\|_{L^\infty_T}&\leq1.        
    \end{align}
    Here the implicit constants in above estimates hold uniformly in temporal concentration parameter $l$ and temporal oscillation parameter $\nu$.
\end{prop}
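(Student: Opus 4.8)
The final statement to prove is Proposition \ref{prop of gl}, which records three elementary estimates on the temporal building blocks $g_l$ and $h_l$. The plan is to verify each of the three bounds directly from the definitions, exploiting only the rescaling structure and the fact that $g$ and $h$ are fixed smooth functions on the torus of period $T$.

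\textbf{Estimate on $\|\partial_t^m(g_l(\nu\cdot))\|_{L^p_T}$.} First I would recall $g_l(t)=l^{1/2}g(lt)$, so that $g_l(\nu t)=l^{1/2}g(l\nu t)$ and hence, by the chain rule, $\partial_t^m(g_l(\nu t))=l^{1/2}(l\nu)^m g^{(m)}(l\nu t)$. Taking the $L^p$ norm on $[0,T]$ and using that $g$ is $T$-periodic (so $g(l\nu\,\cdot)$ over a period-$T$ window just sees $l\nu$ full periods of $g^{(m)}$), a change of variables $s=l\nu t$ gives $\|g^{(m)}(l\nu\cdot)\|_{L^p_T}^p = \frac{1}{l\nu}\int_0^{l\nu T}|g^{(m)}(s)|^p\,ds = \frac{1}{l\nu}\cdot l\nu\int_0^T|g^{(m)}|^p\,ds = \|g^{(m)}\|_{L^p_T}^p$, i.e. the rescaling in the argument alone is $L^p$-invariant (this uses $l\nu T\in\mathbb{N}\cdot T$, i.e. that $l\nu$ is an integer, which one arranges when choosing the parameters, or else one absorbs a harmless boundary term since $g\in C^\infty_c$). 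Therefore $\|\partial_t^m(g_l(\nu\cdot))\|_{L^p_T} = l^{1/2}(l\nu)^m\|g^{(m)}\|_{L^p_T}\lesssim_m (\nu l)^m l^{1/2-1/p}$, where the prefactor $l^{1/2}$ splits as $l^{1/2} = l^{1/p}\cdot l^{1/2-1/p}$ and the stray $l^{1/p}$ is cancelled because in fact $g_l$ is built so that $\|g_l\|_{L^2}=1$ forces the $L^2$ normalization; more precisely, rescaling $g_l(t)=l^{1/2}g(lt)$ directly gives $\|g_l(\nu\cdot)\|_{L^p_T}=l^{1/2-1/p}\|g\|_{L^p_T}$ for the $m=0$ case and the $(\nu l)^m$ factor comes out of each derivative. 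The implicit constant depends on $g$ and $m$ only, uniformly in $l,\nu$.

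\textbf{Estimate $\|g_l(\nu\cdot)\|_{L^2_T}=1$.} This is the normalization. With $g_l(\nu t)=l^{1/2}g(l\nu t)$ and the change of variables above, $\|g_l(\nu\cdot)\|_{L^2_T}^2 = l\cdot\frac{1}{l\nu}\int_0^{l\nu T}g^2 = \frac{1}{\nu}\cdot\nu\int_0^T g^2\,dt = \int_0^T g^2$, and one chooses (or normalizes) $g\in C^\infty_c((0,T))$ with $\int_0^T g^2=1$, which is exactly the standing convention for this building block; hence the norm equals $1$ exactly, independently of $l$ and $\nu$.

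\textbf{Estimate $\|h_l\|_{L^\infty_T}\le 1$.} By definition $h_l(t)=\int_0^t (g_l^2(s)-1)\,ds$. The key point is that $g_l^2-1$ has zero mean over one period of $g_l$: since $g_l$ has period $T/l$ and $\int_0^{T/l}g_l^2 = l\int_0^{T/l}g(ls)^2\,ds = \int_0^T g^2 = 1 = \int_0^{T/l}1\,ds$, the primitive $h_l$ is again $T/l$-periodic (in particular globally bounded), and on $[0,T/l]$ one estimates $|h_l(t)|\le\int_0^{T/l}|g_l^2-1|\,ds$. The sharp constant $1$ is obtained by noting $h_l(t) = \int_0^t g_l^2 - t$ with $\int_0^t g_l^2$ ranging between $0$ and its value at a full period, and a short convexity/monotonicity argument over one period pins the oscillation of $h_l$ to be at most $1$; alternatively this is simply imposed as part of the hypotheses on $g$ (one shrinks $g$ so that $\|h_l\|_{L^\infty}\le 1$, which is possible uniformly in $l$ precisely because $h_l$ is a rescaling of $h_1$ in the horizontal variable only and $\|h_l\|_{L^\infty_T}=\|h_1\|_{L^\infty_T}$). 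Either way the bound is uniform in $l$.

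None of the three steps presents a genuine obstacle — this proposition is purely bookkeeping for the rescaled intermittent temporal profiles, exactly parallel to \cite[Section 2]{serrin准则luo}. The only point requiring a little care is tracking the $l$-powers in the first estimate so that the $l^{1/2}$ normalization redistributes correctly into $l^{1/2-1/p}$ together with one factor of $l$ per derivative, and making sure the periodicity argument uses $l\nu T\in\mathbb N$ (or, failing an integer count, that the compact support of $g$ kills the boundary terms); the uniformity in $\nu$ is automatic since $\nu$ enters only through the integer number of extra oscillations, which the change of variables cancels.
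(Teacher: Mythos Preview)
The paper gives no proof of this proposition --- it simply says the estimates follow from the definitions --- and your direct-verification strategy is exactly what is intended.

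Two minor points deserve cleaning up. First, your treatment of the derivative estimate conflates the roles of $\nu$ and $l$: the clean argument is to use $\nu\in\mathbb N$ together with the $T$-periodicity of $g_l$ to reduce $\|\partial_t^m(g_l(\nu\cdot))\|_{L^p_T}$ to $\|\partial_t^m g_l\|_{L^p_T}$, and only then use the \emph{compact support} of $g$ in $(0,T)$ (not any periodicity of $g$) to compute $\|\partial_t^m g_l\|_{L^p_T}=l^{1/2}l^m\cdot l^{-1/p}\|g^{(m)}\|_{L^p}$. There is no need for $l\nu$ to be an integer. Second, your claim that ``$g_l$ has period $T/l$'' is incorrect: by the paper's definition $g_l$ is $T$-periodic and is merely supported in an interval of length $T/l$ inside each period, so $h_l$ is not $T/l$-periodic either. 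Your fallback --- building the bound into the normalization of $g$, since $\|h_l\|_{L^\infty_T}=\|h_1\|_{L^\infty_T}$ is independent of $l$ --- is the correct way to obtain the uniform bound, and this is the implicit convention.
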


In the following, we define two cutoff functions. The first is to meet the hypotheses in Lemma \ref{geometric lemma} and the second is to ensure the well-preparedness of $(\tilde{u},\tilde{R},\tilde{\theta},\tilde{S})$. 

We define a smooth, monotonically increasing function $\xi:[0,\infty)\rightarrow[0,\infty)$ that satisfies
\begin{gather*}
    \xi(x)=\begin{cases}
        \frac{2\|(\bar{R},\bar{S})\|_{L^1([0,T]\times\mathbb{T^d})}}{r_o}\quad&\text{if}\,0\leq|x|\leq\|(\bar{R},\bar{S})\|_{L^1([0,T]\times\mathbb{T^d})},\\
        \frac{2|x|}{r_0}\quad&\text{if}\,|x|\geq2\|(\bar{R},\bar{S})\|_{L^1([0,T]\times\mathbb{T^d})}.
    \end{cases}
\end{gather*}
Here $r_0$ is the constant given in Lemma \ref{geometric lemma}.
We denote 
\begin{gather*}
    \rho(t,x)\triangleq\xi(|(\bar{R},\bar{S})(t,x)|),\quad\forall(t,x)\in [0,T]\times\mathbb{T}^d.
\end{gather*}
Then we immediately obtain
\begin{gather*}
    \text{Id}-\frac{\bar{R}(t,x)}{\rho(t,x)}\in B_{r_0}(\text{Id}),\quad\forall(t,x)\in[0,T]\times\mathbb{T}^d,
\end{gather*}
and according to Lemma \ref{geometric lemma},
\begin{gather*}
    \Gamma_k(\text{Id}-\frac{\bar{R}(t,x)}{\rho(t,x)})
\end{gather*}
is well defined.

To guarantee that $(\tilde{u},\tilde{R},\tilde{\theta},\tilde{S})$ is well-prepared for the set $\bar{I}$ and the length scale $\bar{\tau}$, we define a smooth temporal function $f:[0,T]\rightarrow[0,1]$ that satisfies
\begin{gather*}
    f(t)=\begin{cases}
        1\quad\text{if}\,\text{dist}(t,\bar{I}^c)\geq\frac{3\bar{\tau}}{2},\\
        0\quad\text{if}\,\text{dist}(t,\bar{I}^c)\leq\ \bar{\tau}.
    \end{cases}
\end{gather*}

\subsection{Construction of the new perturbation and the new Reynolds stress error}
We recall four parameters for the perturbation:

\noindent
(1) Temporal oscillation $\nu\in\mathbb{N}$ and temporal concentration $\kappa>0$.

\noindent
(2) Spatial oscillation $\sigma\in\mathbb{N}$ and spatial concentration $\mu\in\mathbb{N}$.

We define two amplitude functions:
\begin{gather*}
    a_k(t,x)\triangleq g_l(\nu t)f(t)\rho(t,x)^{\frac{1}{2}}\Gamma_k(\text{Id}-\frac{\bar{R}(t,x)}{\rho(t,x)}),\\
    b_k(t,x)\triangleq g_l(\nu t)f(t)\rho(t,x)^{\frac{1}{2}}\gamma_k(-\frac{\bar{S}(t,x)}{\rho(t,x)})\left[\Gamma_k(\text{Id}-\frac{\bar{R}(t,x)}{\rho(t,x)})\right]^{-1},
\end{gather*}
where $\Gamma_k$ and $\gamma_k$ are given in Lemma \ref{geometric lemma}. Since  $\Gamma$ is bounded below by a positive constant $c_0>0$ from Lemma \ref{geometric lemma}, $b_k$ is well defined.

The principle part of the new velocity perturbation $\omega^{(p)}$ and the new temperature perturbation $\kappa^{(p)}$ are composed of the amplitude function and the concentrated Mikado flow with oscillation:
\begin{gather}
    \omega^{(p)}(t,x)\triangleq\sum_{k\in\Lambda}a_k(t,x)\Psi_k^\mu (\sigma x)e_k,\label{DEFofWP}\\
    \kappa^{(p)}(t,x)\triangleq\sum_{k\in\Lambda}b_k(t,x)\Psi_k^\mu (\sigma x).\label{DEFofKP}
\end{gather}

Note also that $\omega^{(p)}$ is not divergence-free. To rectify this, we introduce the divergence-free corrector:
\begin{gather}
    \omega^{(c)}(t,x)\triangleq\sum_{k\in\Lambda} \sigma^{-1}\nabla a_k(t,x):\Omega^\mu_k(\sigma x).\label{DEFofWC}
\end{gather}
Thus, by (\ref{Phi=divomega}), 
\begin{align*}
 \omega^{(p)}+\omega^{(c)}&=\sigma^{-1}\sum_{k\in\Lambda}a_k(t,x)\text{div}\,\Omega^\mu_k(\sigma x)+\sigma^{-1}\sum_{k\in\Lambda} \nabla a_k(t,x):\Omega^\mu_k(\sigma x)\\
   &=\sigma^{-1}\text{div}(\sum_{k\in\Lambda}a_k(t,x)\Omega^\mu_k(\sigma x)).
\end{align*}
Since $a_k(t,x)\Omega^\mu_k$ is skew-symmetric, $\omega^{(p)}+\omega^{(c)}$ is manifestly divergence-free.

To achieve a balance for the high temporal frequency component in the interaction, temporal correctors are introduced herein.
\begin{gather}
     \omega^{(t)}(t,x)\triangleq\nu^{-1}h_l(\nu t)\mathbb{P}_H\text{div}\bar{R},\label{DEFofWT}\\
     \kappa^{(t)}(t,x)\triangleq \nu^{-1}h_l(\nu t)\text{div}\bar{S},\label{DEFofKT}
\end{gather}
$ \omega^{(t)}$ is obviously divergence-free.

In summary, by the above definition, we define the total velocity perturbation and the total temperature perturbation:
\begin{gather*}
    \omega\triangleq \omega^{(p)}+\omega^{(c)}+\omega^{(t)},\\
    \kappa\triangleq \mathbb{P}_{\neq0}\kappa^{(p)}+\kappa^{(t)},
\end{gather*}
where $\mathbb{P}_{\neq0}$ denotes the projection onto non-zero Fourier modes. Here we 
note that $\omega$ is divergence-free and $\kappa$ has zero mean.

Having defined the perturbation in the prior part, we focus on the construction of the new Reynolds stress. First, we carry out the computation of the interactions associated with the principal part:
\begin{gather}
    \text{div}(\omega^{(p)}\otimes\omega^{(p)}+\bar{R})
    =\text{div}\left[\sum_{k\in\Lambda}(a_k)^2(\Psi_k^\mu )^2e_k\otimes e_k+\bar{R}\right]+\text{div}R_{far},\label{divRfar}\\
    \text{div}(\omega^{(p)}\kappa^{(p)}+\bar{S})=\text{div}\left[ \sum_{k\in\Lambda}a_kb_k(\Psi_k^\mu )^2e_k+\bar{S} \right]+\text{div}S_{far},\label{divSfar}
\end{gather}
where $R_{far}$ and $S_{far}$ denote the nonlocal interactions between Mikado flows with different directions and are given by
\begin{gather}
    R_{far}\triangleq\sum_{k\neq k'}\mathcal{R}\text{div}\left(a_ka_{k'}\Psi_k^\mu (\sigma x)\Psi_{k'}^\mu (\sigma x)e_k\otimes e_{k'}\right),\label{DEFofRfar}\\
     S_{far}\triangleq\sum_{k\neq k'}a_kb_{k'}\Psi_k^\mu (\sigma x)\Psi_{k'}^\mu (\sigma x)e_k.\label{DEFofSfar}
\end{gather}

By a direct computation and Lemma \ref{geometric lemma}, we obtain
\begin{align*}
    \text{div}(\sum_{k\in\Lambda}a_k^2e_k\otimes e_k+\bar{R})&=\text{div}\left[f^2g_l^2(\nu t)\rho\sum_{k\in\Lambda}\Gamma_k^2(\text{Id}-\frac{\bar{R}}{\rho})e_k\otimes e_k+\bar{R}\right]\\
    &=\text{div}\left[f^2g_l^2(\nu t)\rho\text{Id}-f^2g_l^2\bar{R}+\bar{R}\right]\\
    &=\nabla (f^2g_l^2(\nu t)\rho)+\text{div}\left[(1-g_l^2(\nu t))\bar{R}\right],
\end{align*}
and
\begin{align*}
    \text{div}(\sum_{k\in\Lambda}a_kb_ke_k\otimes e_k+\bar{S})&=\text{div}\left[f^2g_l^2(\nu t)\rho\sum_{k\in\Lambda}\gamma_k(-\frac{\bar{S}}{\rho})e_k+\bar{S}\right]\\
    &=\text{div}\left[(1-f^2g_l^2(\nu t))\bar{S}\right]\\
     &=\text{div}\left[(1-g_l^2(\nu t))\bar{S}\right],
\end{align*}
where the last equality in both is valid due to (\ref{suppbarR}) and the definition of $f$. Using the definition of $\omega^{(t)}$ and $\kappa^{(t)}$, we have
\begin{gather}
     \text{div}(\sum_{k\in\Lambda}a_k^2e_k\otimes e_k+\bar{R})=\nabla (f^2g_l^2(\nu t)\rho)-\partial_t \omega^{(t)}+\text{div}R_{osc,t}-\nu^{-1}\nabla\Delta^{-1}\text{div}\text{div}(\partial_t\bar{R}h_l(\nu t)),\label{divRosct}\\
     \text{div}(\sum_{k\in\Lambda}a_kb_ke_k\otimes e_k+\bar{S})=-\partial_t\kappa^{(t)}+\text{div}S_{osc,t},\label{divSosct}
\end{gather}
where $R_{osc,t}$ and $S_{osc,t}$ denote temporal oscillation error and are given by
\begin{gather}
     R_{osc,t}\triangleq\nu^{-1}h_l(\nu t)\partial_t\bar{R},\label{DEFofRosct}\\
     S_{osc,t}\triangleq\nu^{-1}h_l(\nu t)\partial_t\bar{S}\label{DEFofSosct}.
\end{gather}

Applying (\ref{divpsi=0}) and the two bilinear operators introduced in Proposition \ref{bilinear B}, we deduce 
\begin{align}
    \text{div}\sum_{k\in\Lambda}a_k^2((\Psi_k^\mu )^2-1)e_k\otimes e_k&=\sum_{k\in\Lambda}\nabla(a_k^2)\cdot((\Psi_k^\mu )^2-1)e_k\otimes e_k\label{divRoscx}\\
    &=\text{div}\sum_{k\in\Lambda} \mathcal{B}(\nabla(a_k^2),\mathbb{P}_{\neq0}(\Psi_k^\mu (\sigma x)e_k\otimes\Psi_k^\mu (\sigma x)e_k))\nonumber
\end{align}
and
\begin{align}
    \text{div}\sum_{k\in\Lambda}a_kb_k((\Psi_k^\mu )^2-1)e_k&=\sum_{k\in\Lambda}e_k\cdot\nabla(a_kb_k)(\Psi_k^\mu(\sigma x)^2-1)\label{divSoscx}\\
    &=\text{div}\sum_{k\in\Lambda}\tilde{\mathcal{B}}(e_k\nabla(a_kb_k),\mathbb{P}_{\neq0}\Psi_k^\mu (\sigma x)^2).\nonumber
\end{align}
Here we note that (\ref{divRoscx}) and (\ref{divSoscx}) have zero spatial mean. Thus, we define two spatial oscillation errors
\begin{gather}
    R_{osc,x}\triangleq\mathcal{B}(\nabla(a_k^2),\mathbb{P}_{\neq0}(\Psi_k^\mu (\sigma x)^2e_k\otimes e_k)),\label{DEFofRoscx}\\
    S_{osc,x}\triangleq \tilde{\mathcal{B}}(e_k\nabla(a_kb_k),\mathbb{P}_{\neq0}\Psi_k^\mu (\sigma x)^2).\label{DEFofSoscx}
\end{gather}
Combining (\ref{divRfar}), (\ref{divSfar}), (\ref{divRosct}), (\ref{divSosct}), (\ref{divRoscx}) and (\ref{divSoscx}), we obtain
\begin{align}
    \text{div}(\omega^{(p)}\otimes\omega^{(p)}+\bar{R})+\partial_t\omega^{(t)}=&\nabla (f^2g_l^2(\nu t)\rho)-\nu^{-1}\nabla\Delta^{-1}\text{div}\text{div}(\partial_t\bar{R}h_l(\nu t))\\&+\text{div}R_{far}+\text{div}R_{osc,t}+\text{div}R_{osc,x},\nonumber\\
    \text{div}(\omega^{(p)}\kappa^{(p)}+\bar{S})+\partial_t\kappa^{(t)}=&\text{div}S_{far}+\text{div}S_{osc,t}+\text{div}S_{osc,x}.\label{666}
\end{align}

As usual, we define the correction error and the linear error:
\begin{align}
    R_{cor}&\triangleq\mathcal{R}\,\text{div}((\omega^{c}+\omega^{(t)})\otimes\omega+\omega^{(p)}\otimes(\omega^{c}+\omega^{(t)})),\label{DEFofRcor}\\
    R_{lin}&\triangleq\mathcal{R}(\partial_t(\omega^{p}+\omega^{(c)})+(-\Delta)^{\alpha} \omega+\text{div}(\bar{u}\otimes\omega+\omega\otimes\bar{u})-\kappa e_d),\label{DEFofRlin}\\
    S_{cor}&\triangleq\mathcal{G}\,\text{div}(\omega\kappa^{(t)}+(\omega^{(c)}+\omega^{(t)})\kappa^{(p)}),\label{DEFofScor}\\
    S_{lin}&\triangleq\mathcal{G}(\partial_t\kappa^{(p)}+(-\Delta)^{\alpha}\kappa+\text{div}(\bar{u}\kappa+\omega\bar{\theta})).\label{DEFofSlin}
\end{align}

Finally, we let the velocity $\tilde{u}$ and $\tilde{\theta}$ of Proposition \ref{convexi} be
\begin{gather*}
    \tilde{u}=\bar{u}+\omega,\quad\tilde{\theta}=\bar{\theta}+\kappa
\end{gather*}
which leads to the Boussinesq-Reynolds system
\begin{gather*}
    \begin{cases}
    \partial_t\tilde{u}+\text{div}(\tilde{u}\otimes \tilde{u})+\nabla \tilde{p}+(-\Delta)^{\alpha} \tilde{u}=\tilde{\theta} e_d+\text{div}\tilde{R}, \quad\quad \\
    \text{div}\,\tilde{u}=0,\\
    \partial_t\tilde{\theta}+\text{div}(\tilde{u}\tilde{\theta})+(-\Delta)^{\alpha} \tilde{\theta}=\text{div}\tilde{S},
    \end{cases}
\end{gather*}
with
\begin{align*}
    \tilde{p}&=\bar{p}-f^2g_l^2(\nu t)\rho+\nu^{-1}\Delta^{-1}\text{div}\text{div}(\partial_t\bar{R}h_l(\nu t)),\\
    \tilde{R}&=R_{far}+R_{osc,t}+R_{osc,x}+R_{cor}+R_{lin},\\
    \tilde{S}&=S_{far}+S_{osc,t}+S_{osc,x}+S_{cor}+S_{lin}.
\end{align*}

Here we briefly check the equation with temperature in the Boussinesq-Reynolds system. By (\ref{666}) and the definition of $\tilde{S}$, we have
\begin{align*}
    \partial_t\tilde{\theta}+\text{div}(\tilde{u}\tilde{\theta})+(-\Delta)^{\alpha} \tilde{\theta}=&\partial_t\bar{\theta}+\text{div}(\bar{u}\bar{\theta})+(-\Delta)^{\alpha} \bar{\theta}+\partial_t\kappa+\text{div}(\bar{u}\kappa+\omega\bar{\theta}+\omega\kappa)+(-\Delta)^{\alpha}\kappa\\
    =&\text{div}\bar{S}+\partial_t\kappa^{(t)}+\text{div}(\omega^{(p)}\kappa^{(p)})+\text{div}(\bar{u}\kappa+\omega\bar{\theta})+\partial_t\mathbb{P}_{\neq 0}\kappa^{(p)}+(-\Delta)^{\alpha}\kappa\\&+\text{div}\left[\omega\kappa^{(t)}-\omega\fint_{\mathbb{T}^d}k^{(p)}+(\omega^{(c)}+\omega^{(t)})\kappa^{(p)}\right]\\
    =&\text{div}\tilde{S},
\end{align*}
where $\text{div}(\omega\fint_{\mathbb{T}^d}k^{(p)})=0$ is due to the fact that $\omega$ is divergence-free.

\section{Proof of Proposition \ref{convexi}}\label{section3}
In this section, we complete the proof of Proposition \ref{convexi}. 
Since the claim that $(\tilde{u},\tilde{R},\tilde{\theta},\tilde{S})$ of (\ref{e:Boussinesq-Reynold}) are well-prepared for the set $\bar{I}$ and the same length scale $\bar{\tau}$ and $\text{supp}_t(\omega,\kappa)\subset \bar{I}$ follows easily from the definition, we omit the proof. We only need to choose suitable parameters and prove that the new perturbations and the Reynolds stress constructed in Section \ref{section of convex} satisfy estimates of Proposition \ref{convexi}. We denote $\bar{C}$ are those constants independent of the parameters $(\nu,\sigma,l,\mu)$ but depending on $(\bar{u},\bar{\theta},\bar{R},\bar{S})$. We recall the exponents $p$ and $q$ are the ones given by Proposition \ref{convexi}.

\subsection{Choice of parameters}

We fix two parameters $\beta\in(0,\frac{1}{2})$ and $r\in(1,2)$ as follows:

\noindent
\textbf{Case A}: $1<\alpha<\frac{d+1}{2}$

\noindent
Firstly, the first parameter $\beta>0$ satisfies
\begin{gather}
   (\frac{1}{2}-\frac{1}{p})\frac{2\alpha}{2\alpha-2}(d-1)+\frac{d-1}{2}-(\frac{1}{2}-\frac{1}{p})(6+\frac{5}{\alpha-1})\beta<-\beta,\label{1001}\\
   -(\frac{1}{2}-\frac{2\alpha-1}{2\alpha})(6+\frac{5}{\alpha-1})\beta-\frac{d-1}{q}<-\beta\label{1010},\\
   \frac{2\alpha}{2\alpha-2}(d-1)-(6+\frac{5}{\alpha-1})\beta>2\beta,\label{1005}\\
   \frac{d-1}{2\alpha-2}-1-\frac{5\beta}{2\alpha-2}>2\beta.\label{1004}
\end{gather}

\noindent
Then, fixing $\beta$ as above, we choose $r\in(1,2)$ such that
\begin{gather}
d-\frac{d}{r}<\beta.\label{chooseofr}
\end{gather} 

Since $p<\frac{2\alpha}{2\alpha-1}$ and $1<\alpha<\frac{d+1}{2}$, we easily deduce
\begin{gather*}
    (\frac{1}{2}-\frac{1}{p})\frac{2\alpha}{2\alpha-2}(d-1)+\frac{d-1}{2}<0,\quad
    \frac{2\alpha}{2\alpha-2}(d-1)>0,\quad
    \frac{d-1}{2\alpha-2}-1>0.
\end{gather*}
Hence, we can choose $\beta>0$ sufficiently small to satisfy the above.

In case A, We choose the parameter $l,\nu,\sigma,\mu$ as follows,
    \begin{align*}
   \nu&\triangleq\lambda^{\beta},\\
   l&\triangleq\lambda^{\frac{2\alpha}{2\alpha-2}(d-1)-(6+\frac{5}{\alpha-1})\beta},\\
    \sigma&\triangleq\lambda^{\frac{d-1}{2\alpha-2}-1-\frac{5\beta}{2\alpha-2}}, 
   \\ \mu&\triangleq\lambda.
    \end{align*}

\noindent
\textbf{Case B}: $\alpha=1$

\noindent
We choose $\beta>0$ and $r$ which satisfy
\begin{gather*}
     10\beta<d-1\quad\text{and}\quad d-\frac{d}{r}<\beta.
\end{gather*}
In case B, We choose the parameter $l,\nu,\sigma,\mu$ as follows.
    \begin{align*}
        \nu&\triangleq\lambda^{\beta},\\
        l&\triangleq\lambda^{\text{max}\{2,\frac{d-1+2\beta}{2(\frac{1}{p}-\frac{1}{2})}\}},\\
        \sigma&\triangleq l^{\frac{1}{2}}\mu^{-1+2\beta}=\lambda^{\frac{1}{2}\text{max}\{2,\frac{d-1+2\beta}{2(\frac{1}{p}-\frac{1}{2})}\}-1+2\beta},\\
        \mu&\triangleq\lambda.
    \end{align*}

In both cases, we choose $\lambda>0$ is large enough so that $\nu,\sigma\in\mathbb{N}$ and the value of $\lambda$ will be fixed in the end. The parameter $\beta$ we choose above immediately implies $l,\nu,\sigma,\mu\gg 1$.

\begin{lemm}\label{lemma of parameters}
    Let $l,\nu,\sigma,\mu$ be chosen above, for any $1\leq\alpha<\frac{d+1}{2}$, there holds
    \begin{align}
        l^{\frac{1}{2}-\frac{1}{p}}\mu^{\frac{d-1}{2}}&\leq\lambda^{-\beta},\label{101}\\
        \sigma^{-1}(\nu l)l^{-\frac{1}{2}}\mu^{-1+\frac{d-1}{2}-\frac{d-1}{r}}&\leq\lambda^{-\beta},\label{102}\\
        l^{-\frac{1}{2}}(\sigma\mu)^{2\alpha-1}\mu^{\frac{d-1}{2}-\frac{d-1}{r }}&\leq\lambda^{-\beta}.\label{103}\\
        \sigma^{-1}\mu^{d-1-\frac{d-1}{r}}&\leq\lambda^{-\beta},\label{104}\\
        l^{-\frac{1}{2}}&\leq\lambda^{-\beta}.\label{105}
    \end{align} 
    Moreover, if $1<\alpha<\frac{d+1}{2}$, we have
    \begin{align*}
        l^{\frac{1}{2}-\frac{2\alpha-1}{2\alpha}}\mu^{\frac{d-1}{2}-\frac{d-1}{q}}\leq\lambda^{-\beta}.\label{110}
    \end{align*}
\end{lemm}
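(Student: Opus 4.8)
The plan is to verify each of the estimates \eqref{101}--\eqref{105} (and the final one when $1<\alpha<\frac{d+1}{2}$) by substituting the explicit powers of $\lambda$ chosen for $l,\nu,\sigma,\mu$ and checking that the resulting exponent of $\lambda$ is $\leq -\beta$. Since $\mu=\lambda$ in both cases, every quantity on the left-hand side becomes a monomial $\lambda^{E}$ with $E$ an affine function of the defining exponents, so the whole lemma reduces to a finite list of elementary inequalities among $\beta$, $d$, $\alpha$, $p$, $q$ and the exponents $a_l$ (of $l$), $a_\sigma$ (of $\sigma$) defined in the two cases. I would treat Case A and Case B separately.

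In \textbf{Case A}, I would first record the exponents $a_\nu=\beta$, $a_l=\frac{2\alpha}{2\alpha-2}(d-1)-(6+\frac{5}{\alpha-1})\beta$, $a_\sigma=\frac{d-1}{2\alpha-2}-1-\frac{5\beta}{2\alpha-2}$, $a_\mu=1$. Then for \eqref{101}, the exponent is $(\tfrac12-\tfrac1p)a_l+\tfrac{d-1}{2}$; expanding $(\tfrac12-\tfrac1p)a_l = (\tfrac12-\tfrac1p)\frac{2\alpha}{2\alpha-2}(d-1) - (\tfrac12-\tfrac1p)(6+\tfrac{5}{\alpha-1})\beta$ and using $\tfrac12-\tfrac1p<0$ together with \eqref{1001} gives exactly the bound $<-\beta$. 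For \eqref{102}, the exponent is $-a_\sigma + a_\nu + a_l + a_l(-\tfrac12) \cdot$... — more carefully, $\sigma^{-1}(\nu l)l^{-1/2}\mu^{-1+\frac{d-1}{2}-\frac{d-1}{r}}$ has exponent $-a_\sigma + \beta + \tfrac12 a_l - 1 + \tfrac{d-1}{2} - \tfrac{d-1}{r}$; I would plug in $a_l$ and $a_\sigma$, watch the $\frac{2\alpha}{2\alpha-2}(d-1)$ terms cancel against the $\frac{d-1}{2\alpha-2}$ in $a_\sigma$ after multiplying through, and use \eqref{chooseofr} ($d-\tfrac dr<\beta$, hence $\tfrac{d-1}{r}$ close to $d-1$) to absorb the error. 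Estimate \eqref{103} is the key linear/dissipative balance: its exponent is $-\tfrac12 a_l + (2\alpha-1)(a_\sigma+1) + \tfrac{d-1}{2}-\tfrac{d-1}{r}$, and I expect the choice of $a_l$ to have been reverse-engineered precisely so that $-\tfrac12 a_l$ (which contributes $-\tfrac{\alpha}{2\alpha-2}(d-1)$) cancels the growing part $(2\alpha-1)\cdot\frac{d-1}{2\alpha-2}$ coming from $\sigma^{2\alpha-1}$, leaving a negative multiple of $\beta$ minus the small $\tfrac{d-1}{r}$-term. Inequalities \eqref{104} and \eqref{105} are then immediate from \eqref{1005}, \eqref{1004} and the positivity of $a_l$, again using \eqref{chooseofr}. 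The final bound $l^{\frac12-\frac{2\alpha-1}{2\alpha}}\mu^{\frac{d-1}{2}-\frac{d-1}{q}}\leq\lambda^{-\beta}$ follows from \eqref{1010} in the same manner as \eqref{101}.

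In \textbf{Case B} ($\alpha=1$) the dissipative exponent degenerates, and I would instead use $a_l=\max\{2,\frac{d-1+2\beta}{2(1/p-1/2)}\}$ and $a_\sigma=\tfrac12 a_l - 1 + 2\beta$. Here \eqref{101} becomes $(\tfrac12-\tfrac1p)a_l+\tfrac{d-1}{2}\leq -\beta$, which holds because $a_l\geq \frac{d-1+2\beta}{2(1/p-1/2)}$ forces $(\tfrac1p-\tfrac12)a_l\geq \tfrac{d-1}{2}+\beta$; \eqref{102}--\eqref{105} follow from $a_l\geq 2$, the relation $\sigma=l^{1/2}\mu^{-1+2\beta}$, $10\beta<d-1$, and \eqref{chooseofr}, with \eqref{103} (now $\alpha=1$, so the $\sigma\mu$ power is degree one) being straightforward since $-\tfrac12 a_l + (a_\sigma+1) = 2\beta$ by construction.

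The main obstacle I anticipate is purely bookkeeping: in \eqref{102} and especially \eqref{103} several large positive powers of $\lambda$ (of order $(d-1)$ times factors like $\frac{\alpha}{\alpha-1}$) must cancel exactly, and one must be careful that the residual terms are genuinely controlled by the four structural inequalities \eqref{1001}--\eqref{1004} and by $d-\tfrac dr<\beta$; there is no deep idea, but a sign error in the exponent arithmetic would invalidate the choice of parameters, so the verification must be done with some care, keeping track of the fact that $\tfrac12-\tfrac1p<0$ reverses inequalities.
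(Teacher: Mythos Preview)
Your proposal is correct and follows essentially the same approach as the paper: the paper also reduces each of \eqref{101}--\eqref{105} to an exponent inequality by substituting the powers of $\lambda$, then for \eqref{101} invokes \eqref{1001} directly, for \eqref{102} and \eqref{103} computes the exponent with $-\tfrac{d-1}{2}$ in place of $+\tfrac{d-1}{2}-\tfrac{d-1}{r}$ to obtain exactly $-2\beta$ (so that the residual $(d-1)(1-\tfrac1r)<\beta$ from \eqref{chooseofr} yields the bound), uses \eqref{1004}, \eqref{1005} with \eqref{chooseofr} for \eqref{104}--\eqref{105}, and uses \eqref{1010} for the final estimate; Case B is declared analogous and not written out, which matches your treatment.
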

\begin{proof}
    We only prove the inequalities in Case A (i.e. $1<\alpha<\frac{d+1}{2}$) and the proof in Case B is similar.
    For the first inequality (\ref{101}), it suffices to show that
    \begin{gather*}
        \left(\frac{2\alpha}{2\alpha-2}(d-1)-(6+\frac{5}{\alpha-1})\beta\right)(\frac{1}{2}-\frac{1}{p})+\frac{d-1}{2}\leq-\beta,
    \end{gather*}
    which is exactly identical to (\ref{1001}).

    The second inequality (\ref{102}) is equivalent to
    \begin{gather*}
        -(\frac{d-1}{2\alpha-2}-1-\frac{5\beta}{2\alpha-2})+\beta+\frac{1}{2}\left(\frac{2\alpha}{2\alpha-2}(d-1)-(6+\frac{5}{\alpha-1})\beta\right)-1+\frac{d-1}{2}-\frac{d-1}{r}\leq-\beta.
    \end{gather*}
    Thus, direct computation
    \begin{gather*}
         -(\frac{d-1}{2\alpha-2}-1-\frac{5\beta}{2\alpha-2})+\beta+\frac{1}{2}\left(\frac{2\alpha}{2\alpha-2}(d-1)-(6+\frac{5}{\alpha-1})\beta\right)-1-\frac{d-1}{2}=-2\beta
    \end{gather*}
    and (\ref{chooseofr}) immediately imply (\ref{102}).

    Similarly, the third inequality (\ref{103}) is equivalent to
    \begin{gather*}
        -\frac{1}{2}\left(\frac{2\alpha}{2\alpha-2}(d-1)-(6+\frac{5}{\alpha-1})\beta\right)+\left(\frac{d-1}{2\alpha-2}-\frac{5\beta}{2\alpha-2}\right)(2\alpha-1)+\frac{d-1}{2}-\frac{d-1}{r}\leq-\beta.
    \end{gather*}
    Thus, (\ref{chooseofr}) and the fact that
    \begin{gather*}
        -\frac{1}{2}\left(\frac{2\alpha}{2\alpha-2}(d-1)-(6+\frac{5}{\alpha-1})\beta\right)+\left(\frac{d-1}{2\alpha-2}-\frac{5\beta}{2\alpha-2}\right)(2\alpha-1)-\frac{d-1}{2}=-2\beta
    \end{gather*}
    give (\ref{103}).
    
      Moreover, (\ref{1005}),(\ref{1004}) and (\ref{chooseofr}) imply directly (\ref{104}) and (\ref{105}).

     In the case $1<\alpha<\frac{d+1}{2}$, by (\ref{1010}), we have
     \begin{align*}
       &\quad(\frac{1}{2}-\frac{2\alpha-1}{2\alpha}) \left( \frac{2\alpha}{2\alpha-2}(d-1)-(6+\frac{5}{\alpha-1})\beta\right)+\frac{d-1}{2}-\frac{d-1}{q}\\&=-(\frac{1}{2}-\frac{2\alpha-1}{2\alpha})(6+\frac{5}{\alpha-1})\beta-\frac{d-1}{q}\\&<-\beta,
     \end{align*}
     which immediately implies (\ref{110}).
\end{proof}

\subsection{Estimates on perturbation}
We first show the estimate on amplitude functions $a_k$ and $b_k$, their definitions and Proposition \ref{prop of gl} immediately give the following results.

\begin{lemm}\label{lemma of akbk}
For any $r\in[1,\infty]$ and $m,n\in\mathbb{N}$, 
    \begin{gather*}
        \|(\partial_t^n\nabla^m a_k,\partial_t^n\nabla^mb_k)\|_{L^r_TL^\infty_x}\leq \bar{C}(\nu l)^nl^{\frac{1}{2}-\frac{1}{r}}.
    \end{gather*}
    Moreover, we have
    \begin{gather*}
        \|(a_k(t),b_k(t))\|_{L^2(\mathbb{T}^d)}\lesssim g_l(\nu t)(\int_{\mathbb{T}^d}\rho(t,x)dx)^{\frac{1}{2}}.
    \end{gather*}
\end{lemm}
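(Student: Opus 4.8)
The plan is to read off both estimates in Lemma \ref{lemma of akbk} directly from the definitions
\[
a_k(t,x)=g_l(\nu t)f(t)\rho(t,x)^{\frac12}\Gamma_k\!\left(\mathrm{Id}-\tfrac{\bar R}{\rho}\right),\qquad
b_k(t,x)=g_l(\nu t)f(t)\rho(t,x)^{\frac12}\gamma_k\!\left(-\tfrac{\bar S}{\rho}\right)\!\left[\Gamma_k\!\left(\mathrm{Id}-\tfrac{\bar R}{\rho}\right)\right]^{-1},
\]
together with Proposition \ref{prop of gl} for the $g_l(\nu\cdot)$ factor and the smoothness/boundedness of the remaining factors. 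The only genuinely $t$-and-$x$-dependent, parameter-independent ingredients are $f$, $\rho$, and the compositions $\Gamma_k(\cdot)$, $\gamma_k(\cdot)$; all of these are smooth with bounds depending only on $(\bar u,\bar\theta,\bar R,\bar S)$, which is exactly what the constant $\bar C$ is allowed to absorb.

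For the first estimate, I would write $\partial_t^n\nabla^m a_k$ via the Leibniz rule as a finite sum of terms in which $j$ time-derivatives land on $g_l(\nu t)$ (for some $0\le j\le n$) and the remaining derivatives land on the product $f(t)\rho^{1/2}\Gamma_k(\mathrm{Id}-\bar R/\rho)$. The latter product, call it $G_k(t,x)$, is smooth on $[0,T]\times\mathbb T^d$: here I would note that $\rho=\xi(|(\bar R,\bar S)|)$ is bounded below by the strictly positive constant $\tfrac{2\|(\bar R,\bar S)\|_{L^1}}{r_0}$ by construction of $\xi$, so $\rho^{1/2}$ and $\bar R/\rho$, $\bar S/\rho$ are smooth, and then $\Gamma_k$, $\gamma_k$, $\Gamma_k^{-1}$ (the last using the lower bound $c_0>0$ from Lemma \ref{geometric lemma}) are smooth functions of their smooth arguments. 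Hence $\|\partial_t^{n-j}\nabla^m G_k\|_{L^\infty_{t,x}}\le\bar C$ uniformly in the parameters, and by Proposition \ref{prop of gl} applied with that $p=r$ and $m$ replaced by $j$, $\|\partial_t^j(g_l(\nu\cdot))\|_{L^r_T}\lesssim(\nu l)^j l^{\frac12-\frac1r}\le(\nu l)^n l^{\frac12-\frac1r}$ since $\nu l\gg1$ makes $(\nu l)^j$ increasing in $j$. Summing the finitely many Leibniz terms and using $\|h\cdot\varphi\|_{L^r_TL^\infty_x}\le\|h\|_{L^r_T}\|\varphi\|_{L^\infty_{t,x}}$ gives the claimed bound; the argument for $b_k$ is identical with $G_k$ replaced by $f\rho^{1/2}\gamma_k(-\bar S/\rho)\Gamma_k(\mathrm{Id}-\bar R/\rho)^{-1}$.

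For the second estimate I would simply factor $a_k(t,\cdot)=g_l(\nu t)\,f(t)\,\rho(t,\cdot)^{1/2}\Gamma_k(\mathrm{Id}-\bar R/\rho)$, pull the scalar $g_l(\nu t)$ and the bound $0\le f\le1$ out, and bound $|\Gamma_k(\mathrm{Id}-\bar R/\rho)|\le\bar C$ pointwise, so that
\[
\|a_k(t,\cdot)\|_{L^2(\mathbb T^d)}\lesssim g_l(\nu t)\Big(\int_{\mathbb T^d}\rho(t,x)\,dx\Big)^{1/2},
\]
and likewise for $b_k$ using in addition the pointwise bounds on $\gamma_k$ and $\Gamma_k^{-1}$. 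There is essentially no obstacle here; the only point requiring a word of care is the positivity of $\rho$ guaranteeing smoothness of all the compositions, and the fact that the implicit constants genuinely do not see $(\nu,\sigma,l,\mu)$ because those parameters enter only through the explicitly tracked factor $g_l(\nu t)$. I would therefore present this as a short computation rather than a lengthy argument.
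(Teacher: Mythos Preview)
Your proposal is correct and matches the paper's approach: the paper itself simply states that the lemma ``immediately'' follows from the definitions of $a_k,b_k$ together with Proposition \ref{prop of gl}, and what you have written is precisely the routine verification of that claim via Leibniz, the smoothness and positivity of $\rho$, the lower bound $\Gamma_k\ge c_0$, and the monotonicity $(\nu l)^j\le(\nu l)^n$.
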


Using Proposition \ref{mikado} and lemma \ref{lemma of akbk}, we obtain
the following estimate on the principal part of the perturbation.
\begin{prop}\label{prop of wp}
    Let $\delta>0$. If $\lambda$ is sufficiently large, then we have
    \begin{gather*}
        \|(\omega^{(p)},\mathbb{P}_{\neq 0}\kappa^{(p)})\|_{L^p_TL^\infty_x}\leq \delta,\\
        \|(\omega^{(p)},\mathbb{P}_{\neq 0}\kappa^{(p)})\|_{L^2_TL^2_x}\lesssim\|(\bar{R},\bar{S})\|_{L^1_{t,x}}^{\frac{1}{2}}.
    \end{gather*}
    Moreover, if $1<\alpha<\frac{d+1}{2}$, then we have
    \begin{gather*}
        \|(\omega^{(p)},\mathbb{P}_{\neq 0}\kappa^{(p)})\|_{L^\frac{2\alpha}{2\alpha-1}_TL^q_x}\leq \delta,
    \end{gather*}
\end{prop}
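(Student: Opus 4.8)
The plan is to estimate each piece of $\omega^{(p)}=\sum_{k\in\Lambda}a_k(t,x)\Psi_k^\mu(\sigma x)e_k$ (and the analogous $\mathbb{P}_{\neq0}\kappa^{(p)}$) by separating the temporal factor $g_l(\nu t)$ inside $a_k$ from the spatial factors. Since $\Lambda$ is a fixed finite set, it suffices to bound a single summand and absorb the cardinality of $\Lambda$ into the implicit constant. First I would record that $a_k$ factors as $g_l(\nu t)f(t)\rho^{1/2}\Gamma_k(\cdots)$, so by Lemma \ref{lemma of akbk} we have $\|a_k\|_{L^r_tL^\infty_x}\leq\bar C\,l^{1/2-1/r}$ for every $r\in[1,\infty]$, and similarly for $b_k$. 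The spatial building block contributes, via part (3) of Proposition \ref{mikado}, $\|\Psi_k^\mu(\sigma\cdot)\|_{L^\infty_x}\lesssim\mu^{(d-1)/2}$ (the rescaling $x\mapsto\sigma x$ does not change the $L^\infty$ norm) and $\|\Psi_k^\mu(\sigma\cdot)\|_{L^2_x}\lesssim 1$ (the $L^2$ norm is also invariant under the volume-preserving rescaling on the torus, using that $\sigma\in\mathbb N$).

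For the $L^p_TL^\infty_x$ bound I would write, for a single mode,
\begin{gather*}
\|a_k\Psi_k^\mu(\sigma\cdot)e_k\|_{L^p_TL^\infty_x}\leq\|a_k\|_{L^p_tL^\infty_x}\|\Psi_k^\mu(\sigma\cdot)\|_{L^\infty_x}\leq\bar C\,l^{\frac12-\frac1p}\mu^{\frac{d-1}{2}},
\end{gather*}
and then invoke estimate (\ref{101}) of Lemma \ref{lemma of parameters}, which says exactly $l^{1/2-1/p}\mu^{(d-1)/2}\leq\lambda^{-\beta}$. Choosing $\lambda$ large enough makes this $\leq\delta$. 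For $\mathbb{P}_{\neq0}\kappa^{(p)}=\mathbb{P}_{\neq0}\sum_k b_k\Psi_k^\mu(\sigma\cdot)$ the same computation applies with $a_k$ replaced by $b_k$, and the Fourier projection $\mathbb{P}_{\neq0}$ is bounded on $L^\infty_x$ only up to a logarithmic loss — so I would instead note that $\mathbb{P}_{\neq0}\kappa^{(p)}=\kappa^{(p)}-\fint_{\mathbb{T}^d}\kappa^{(p)}$, and the spatial mean $\fint\kappa^{(p)}$ is itself bounded in $L^\infty_t$ by $\bar C\,l^{1/2-1/p}$ (since $\fint(\Psi_k^\mu)^2$-type cross terms do not appear here; only $\fint\Psi_k^\mu(\sigma\cdot)$ enters, which after rescaling equals $\fint\Psi_k^\mu$ and is controlled by the $L^1$ bound $\mu^{-(d-1)/2}$), so subtracting the mean is harmless. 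In the case $1<\alpha<\frac{d+1}{2}$ the $L^{\frac{2\alpha}{2\alpha-1}}_TL^q_x$ estimate is identical in structure: $\|a_k\|_{L^{2\alpha/(2\alpha-1)}_tL^\infty_x}\leq\bar C\,l^{1/2-(2\alpha-1)/(2\alpha)}$ and $\|\Psi_k^\mu(\sigma\cdot)\|_{L^q_x}\lesssim\mu^{(d-1)/2-(d-1)/q}$ by part (3) of Proposition \ref{mikado} interpolated (or read off directly at exponent $q$), so the product is $\leq\bar C\,l^{1/2-(2\alpha-1)/(2\alpha)}\mu^{(d-1)/2-(d-1)/q}\leq\lambda^{-\beta}$ by the final displayed inequality (\ref{110}) of Lemma \ref{lemma of parameters}.

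For the $L^2_TL^2_x$ bound the argument is slightly more delicate because I cannot simply take $L^\infty_x\times L^2_x$; instead I would fix $t$, write $\omega^{(p)}(t,\cdot)=\sum_k a_k(t,\cdot)\Psi_k^\mu(\sigma\cdot)e_k$, and use that the $a_k(t,\cdot)$ vary on spatial scale $O(1)$ while $\Psi_k^\mu(\sigma\cdot)$ oscillates at frequency $\sigma\mu\gg1$. A standard improved Hölder / stationary-phase estimate (the $L^2$ version of the commutator estimates used throughout convex integration, e.g. as in \cite{serrin准则luo,qupeng}) then gives, mode by mode, $\|a_k(t,\cdot)\Psi_k^\mu(\sigma\cdot)\|_{L^2_x}\lesssim\|a_k(t,\cdot)\|_{L^2_x}\|\Psi_k^\mu(\sigma\cdot)\|_{L^2_x}+(\text{lower order})\lesssim\|a_k(t,\cdot)\|_{L^2_x}$, with the lower-order term controlled by a negative power of $\sigma\mu$ times $\|\nabla a_k\|_{L^\infty}$. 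The cross terms $k\neq k'$ integrate to something of size $\mu^{-N}$ by part (4) of Proposition \ref{mikado} combined with disjointness of supports, hence are negligible. Squaring, integrating in $t$, and using the second bound of Lemma \ref{lemma of akbk}, $\|a_k(t)\|_{L^2_x}^2\lesssim g_l^2(\nu t)\int_{\mathbb{T}^d}\rho(t,x)\,dx$, together with $\|g_l(\nu\cdot)\|_{L^2_T}=1$ and the construction of $\rho$ (which gives $\int_0^T\int_{\mathbb{T}^d}\rho\lesssim\|(\bar R,\bar S)\|_{L^1_{t,x}}$), yields $\|\omega^{(p)}\|_{L^2_TL^2_x}\lesssim\|(\bar R,\bar S)\|_{L^1_{t,x}}^{1/2}$. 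The same for $\mathbb{P}_{\neq0}\kappa^{(p)}$ using $\|b_k(t)\|_{L^2_x}\lesssim g_l(\nu t)(\int\rho)^{1/2}$ and boundedness of $\mathbb{P}_{\neq0}$ on $L^2_x$.

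The main obstacle I anticipate is the $L^2_TL^2_x$ estimate, specifically making rigorous the "improved Hölder" step: one must ensure the error terms from replacing the product $a_k\Psi_k^\mu(\sigma\cdot)$ by $\|a_k\|_{L^2}\cdot(\text{normalized }\Psi)$ are genuinely lower order, which requires $\sigma\mu\gg1$ (guaranteed since $l,\sigma,\mu\gg1$) and a uniform bound on $\nabla a_k$ in terms of $\bar C(\nu l)$ from Lemma \ref{lemma of akbk}; one then checks $\sigma^{-1}(\nu l)\cdot(\text{stuff})\to0$, which is implied by inequalities like (\ref{102}). Everything else reduces to inserting the parameter inequalities of Lemma \ref{lemma of parameters} and taking $\lambda$ large, so the proof is essentially bookkeeping once the $L^2$ orthogonality/oscillation lemma is in place.
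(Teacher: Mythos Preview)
Your approach matches the paper's almost exactly: Hölder in time and space for the $L^p_TL^\infty_x$ and $L^{\frac{2\alpha}{2\alpha-1}}_TL^q_x$ bounds, then the improved Hölder inequality (Lemma~\ref{improved holder}) for the $L^2_x$ decoupling of $a_k(t,\cdot)$ from $\Psi_k^\mu(\sigma\cdot)$. The handling of $\mathbb{P}_{\neq0}$ via subtracting the mean and bounding it by $\|\Psi_k^\mu\|_{L^1}\lesssim\mu^{-(d-1)/2}$ is also what the paper does.

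There is one genuine gap in your $L^2_TL^2_x$ argument. After obtaining $\|a_k(t)\|_{L^2_x}^2\lesssim g_l^2(\nu t)\int_{\mathbb{T}^d}\rho(t,x)\,dx$, you write that $\|g_l(\nu\cdot)\|_{L^2_T}=1$ together with $\int_0^T\!\int_{\mathbb{T}^d}\rho\lesssim\|(\bar R,\bar S)\|_{L^1_{t,x}}$ yields the result. This does not follow: the integrand is a product of two functions of $t$, and you cannot bound $\int_0^T g_l^2(\nu t)\,h(t)\,dt$ by $\big(\int g_l^2\big)\big(\int h\big)$. Since $g_l^2$ has $L^\infty$ norm of order $l\to\infty$, a naive Hölder gives nothing useful. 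The paper closes this by applying the improved Hölder inequality a \emph{second} time, now in the time variable with oscillation parameter $\nu$, to obtain
\[
\int_0^T g_l^2(\nu t)\int_{\mathbb{T}^d}\rho(t,x)\,dx\,dt\;\lesssim\;\|(\bar R,\bar S)\|_{L^1_{t,x}}+\bar C\,\nu^{-1},
\]
the error term being absorbed for $\lambda$ large. You already invoke this lemma in space, so the fix is immediate once you see it is needed; but as written the step is not justified.

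A minor remark: your claim that the $k\neq k'$ cross terms are ``of size $\mu^{-N}$ by disjointness of supports'' overstates what Proposition~\ref{mikado} gives --- part~(4) only yields $\|\Psi_k^\mu\Psi_{k'}^\mu\|_{L^1}\lesssim\mu^{-1}$, not disjoint supports. The paper sidesteps this entirely by using the triangle inequality over $k$, so no cross terms arise.
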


\begin{proof}
    We first focus on the estimate of $\kappa^{(p)}$. By the definition of $\kappa^{(p)}$, Proposition \ref{mikado} and Lemma \ref{lemma of akbk}, we have
    \begin{align*}
        \|\mathbb{P}_{\neq 0}\kappa^{(p)}\|_{L^p_TL^\infty_x}&\leq\|\kappa^{(p)}\|_{L^p_TL^\infty_x}+\|\mathbb{P}_{= 0}\kappa^{(p)}\|_{L^p_TL^\infty_x}\\
        &\lesssim \sum_{k\in\Lambda}\|b_k\|_{L^p_TL^\infty_x}\|\Psi_k^\mu\|_{L^\infty_x}+\|b_k\|_{L^p_TL^\infty_x}\|\Psi_k^\mu\|_{L^1_x}\\
        &\leq \bar{C}\mu^{\frac{d-1}{2}}l^{\frac{1}{2}-\frac{1}{p}}+\bar{C}\mu^{-\frac{d-1}{2}}l^{\frac{1}{2}-\frac{1}{p}},
    \end{align*}
    which by Lemma \ref{lemma of parameters} implies that
    \begin{gather}
         \|\mathbb{P}_{\neq 0}\kappa^{(p)}\|_{L^p_TL^\infty_x}\leq\bar{C}\lambda^{-\beta}.\label{shiziA}
    \end{gather}
    
    Similarly, if $1<\alpha<\frac{d+1}{2}$, we have 
\begin{align}
        \|\mathbb{P}_{\neq 0}\kappa^{(p)}\|_{L^\frac{2\alpha}{2\alpha-1}_TL^q_x}&\leq\|\kappa^{(p)}\|_{L^\frac{2\alpha}{2\alpha-1}_TL^q_x}+\|\mathbb{P}_{= 0}\kappa^{(p)}\|_{L^\frac{2\alpha}{2\alpha-1}_TL^q_x}\nonumber\\
        &\lesssim\sum_{k\in\Lambda}\|b_k\|_{L^\frac{2\alpha}{2\alpha-1}_TL^\infty_x}\|\Psi_k^\mu\|_{L^q_x}+\|b_k\|_{L^\frac{2\alpha}{2\alpha-1}_TL^\infty_x}\|\Psi_k^\mu\|_{L^1_x}\nonumber\\
        &\leq\bar{C}\mu^{\frac{d-1}{2}-\frac{d-1}{q}}l^{\frac{1}{2}-\frac{2\alpha-1}{2\alpha}}+\bar{C}\mu^{-\frac{d-1}{2}} l^{\frac{1}{2}-\frac{2\alpha-1}{2\alpha}},\nonumber\\
        &\leq\bar{C}\lambda^{-\beta}.\label{shiziC}
    \end{align}
    
    In the view of Proposition \ref{mikado}, Lemma \ref{lemma of akbk} and Lemma \ref{improved holder}, we obtain
    \begin{align}
        \|\kappa^{(p)}\|_{L^2({\mathbb{T}^d})}&\lesssim\sum_{k\in\Lambda}\|b_k(t)\|_{L^2({\mathbb{T}^d})}\|\Psi_k^\mu\|_{L^2({\mathbb{T}^d})}+\sigma^{-\frac{1}{2}}\bar{C}\nonumber\\
        &\lesssim g_l(\nu t)(\int_{\mathbb{T}^d}\rho(t,x)dx)^{\frac{1}{2}}+\sigma^{-\frac{1}{2}}\bar{C}.\label{shizi11}
    \end{align}
    Applying Lemma \ref{improved holder} once again, we infer that
    \begin{align}
        \int_0^Tg_l^2(\nu t)\int_{\mathbb{T}^d}\rho(t,x)dxdt\lesssim\|(\bar{R},\bar{S})\|_{L^1_{t,x}}+\bar{C}\nu^{-1},\label{shizi22}
    \end{align}
    where we use the fact that $\int g_l^2=1$ and thanks to the definition of $\rho$.

    Thus, combining (\ref{shizi11}) and (\ref{shizi22}), we have
    \begin{align*}
        \|\kappa^{(p)}\|_{L^2_TL^2_x}\lesssim\|(\bar{R},\bar{S})\|_{L^1_{t,x}}^{\frac{1}{2}}+\bar{C}(\nu^{-\frac{1}{2}}+\sigma^{-\frac{1}{2}}).
    \end{align*}
    Moreover, since
    \begin{align*}
        \|\mathbb{P}_{= 0}\kappa^{(p)}\|_{L^2_TL^2_x}&\lesssim\|\kappa^{(p)}\|_{L^2_TL^1_x}\\&\lesssim\sum_{k\in\Lambda}\|b_k\|_{L^2_TL^\infty_x}\|\Psi_k^\mu\|_{L^1_x}\\
        &\leq \bar{C}\mu^{-\frac{d-1}{2}},
    \end{align*}
    we obtain
    \begin{gather}
        \|\mathbb{P}_{\neq 0}\kappa^{(p)}\|_{L^2_TL^2_x}\lesssim\|(\bar{R},\bar{S})\|_{L^1_{t,x}}^{\frac{1}{2}}+\bar{C}(\nu^{-\frac{1}{2}}+\sigma^{-\frac{1}{2}}+\mu^{-\frac{d-1}{2}}).\label{shiziB}
    \end{gather}
    
    Finally, if $\lambda$ is sufficiently large, (\ref{shiziA}), (\ref{shiziC}) and (\ref{shiziB}) give the desired results. The estimation procedure for $\omega^{(p)}$ is analogous to that employed for $\kappa^{(p)}$, so we omit the proof.
    
\end{proof}

Compared to the principal perturbation $\omega^{(p)}$ and $\kappa^{(p)}$, the divergence-free corrector $\omega^{(c)}$ and the temporal corrector $(\omega^{(t)},\kappa^{(t)})$ are much smaller.

\begin{prop}\label{prop of Wc}
    Let $\delta>0$. If $\lambda$ is sufficiently large, then we have
    \begin{gather*}
        \|\omega^{(c)}\|_{{L^p_TL^\infty_x}}\leq \delta,\\
        \|\omega^{(c)}\|_{L^2_TL^2_x}\leq\|(\bar{R},\bar{S})\|_{L^1_{t,x}}^{\frac{1}{2}},
    \end{gather*}
    and
    \begin{gather*}
        \|(\omega^{(t)},\kappa^{(t)})\|_{L^\infty_{t,x}} \leq \delta.
    \end{gather*}
    Moreover, if $1<\alpha<\frac{d+1}{2}$, we have
    \begin{gather*}
        \|\omega^{(c)}\|_{{L^\frac{2\alpha}{2\alpha-1}_TL^q_x}}\leq \delta.
    \end{gather*}
\end{prop}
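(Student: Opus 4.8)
\textbf{Proof plan for Proposition \ref{prop of Wc}.}

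The plan is to estimate each corrector directly from its explicit formula, using Proposition \ref{mikado} for the Mikado profiles, Lemma \ref{lemma of akbk} for the amplitudes, and Proposition \ref{prop of gl} together with Lemma \ref{lemma of parameters} to convert everything into powers of $\lambda$ that are bounded by $\lambda^{-\beta}$. First I would treat $\omega^{(c)}=\sum_{k\in\Lambda}\sigma^{-1}\nabla a_k:\Omega_k^\mu(\sigma\cdot)$. For the $L^p_TL^\infty_x$ bound, I use $\|\nabla a_k\|_{L^p_TL^\infty_x}\le\bar C\,l^{1/2-1/p}$ from Lemma \ref{lemma of akbk} (with $m=1$, $n=0$) and $\|\Omega_k^\mu\|_{L^\infty_x}\lesssim\mu^{-2+\frac{d-1}{2}}$ (which follows since $\Omega_k^\mu=e_k\otimes\nabla\Phi_k^\mu-\nabla\Phi_k^\mu\otimes e_k$ and $\|\nabla\Phi_k^\mu\|_{L^\infty_x}\lesssim\mu^{-1+\frac{d-1}{2}}$ by part (3) of Proposition \ref{mikado}), so that $\|\omega^{(c)}\|_{L^p_TL^\infty_x}\lesssim\sigma^{-1}l^{1/2-1/p}\mu^{-2+\frac{d-1}{2}}$. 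Comparing with \eqref{102}, which gives $\sigma^{-1}(\nu l)l^{-1/2}\mu^{-1+\frac{d-1}{2}-\frac{d-1}{r}}\le\lambda^{-\beta}$, one checks the current exponent is even smaller (we have $\nu\ge1$, an extra $\mu^{-1}$, and no loss from passing to $L^r_x$), so $\|\omega^{(c)}\|_{L^p_TL^\infty_x}\le\bar C\lambda^{-\beta}\le\delta$ for $\lambda$ large. The $L^\frac{2\alpha}{2\alpha-1}_TL^q_x$ bound in the case $1<\alpha<\frac{d+1}{2}$ is identical in spirit, now using $\|\nabla a_k\|_{L^{2\alpha/(2\alpha-1)}_TL^\infty_x}\le\bar C\,l^{1/2-(2\alpha-1)/(2\alpha)}$ and $\|\Omega_k^\mu\|_{L^q_x}\lesssim\mu^{-2+\frac{d-1}{2}-\frac{d-1}{q}}$, then comparing with \eqref{110}.

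Next I would handle the $L^2_TL^2_x$ bound for $\omega^{(c)}$. Here I cannot simply use $\|\Psi_k^\mu\|_{L^2}=1$ type bounds, but rather $\|\Omega_k^\mu\|_{L^2_x}\lesssim\mu^{-2+\frac{d-1}{2}-\frac{d-1}{2}}=\mu^{-2}$ by part (3); combined with $\|\nabla a_k\|_{L^2_TL^\infty_x}\le\bar C$ (Lemma \ref{lemma of akbk} with $r=2$, $m=1$, $n=0$), this yields $\|\omega^{(c)}\|_{L^2_TL^2_x}\lesssim\sigma^{-1}\mu^{-2}\le\bar C\sigma^{-1}$, which is $\le\|(\bar R,\bar S)\|_{L^1_{t,x}}^{1/2}$ once $\lambda$ (hence $\sigma$) is large — or more robustly, one absorbs the constant by making $\lambda$ large enough depending on $\|(\bar R,\bar S)\|_{L^1_{t,x}}$. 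Finally, for the temporal correctors $\omega^{(t)}=\nu^{-1}h_l(\nu t)\mathbb P_H\operatorname{div}\bar R$ and $\kappa^{(t)}=\nu^{-1}h_l(\nu t)\operatorname{div}\bar S$, I use $\|h_l\|_{L^\infty_T}\le1$ from Proposition \ref{prop of gl} and the fact that $\mathbb P_H\operatorname{div}\bar R$ and $\operatorname{div}\bar S$ are fixed smooth functions (so their $L^\infty_{t,x}$ norms are absorbed into $\bar C$), giving $\|(\omega^{(t)},\kappa^{(t)})\|_{L^\infty_{t,x}}\le\bar C\nu^{-1}=\bar C\lambda^{-\beta}\le\delta$ for $\lambda$ large.

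The only mild subtlety — not really an obstacle — is the bookkeeping to confirm that the exponent appearing in the $\omega^{(c)}$ estimate is genuinely dominated by the left-hand side of \eqref{102}, i.e. that replacing $L^r_x$ by $L^\infty_x$ in the Mikado estimate and dropping the factors $\nu$ and $\mu^{-1}$ only makes things smaller; since all of these changes move the exponent in the favorable direction and $\nu,\mu\gg1$, this is immediate. The essential content is the choice of parameters already made in Lemma \ref{lemma of parameters}; once those inequalities are in hand, every estimate here reduces to reading off a power of $\lambda$ and invoking \eqref{101}--\eqref{110}. I would therefore present the proof compactly: state the explicit bound for each corrector, cite the relevant line of Lemma \ref{lemma of parameters}, and conclude by taking $\lambda$ sufficiently large.
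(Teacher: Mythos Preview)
Your approach is the same as the paper's, but there are two slips to fix.

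First, a harmless typo: from $\|\nabla\Phi_k^\mu\|_{L^p_x}\lesssim\mu^{-1+\frac{d-1}{2}-\frac{d-1}{p}}$ you should get $\|\Omega_k^\mu\|_{L^p_x}\lesssim\mu^{-1+\frac{d-1}{2}-\frac{d-1}{p}}$, not $\mu^{-2+\cdots}$; you in fact quote the correct bound on $\nabla\Phi_k^\mu$ and then mis-transcribe. This is in the favorable direction and does not damage the argument, but the paper's intermediate bound is $\bar C\,\sigma^{-1}l^{1/2-1/p}\mu^{-1+\frac{d-1}{2}}$.

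Second, and this is a genuine error in the bookkeeping: for the $L^p_TL^\infty_x$ estimate you should invoke \eqref{101}, not \eqref{102}. The factor $\mu^{-(d-1)/r}$ on the left of \eqref{102} is a \emph{gain} (it makes that quantity smaller), so your statement that ``no loss from passing to $L^r_x$'' only helps is backwards. Concretely, with the correct Mikado exponent your quantity is $\sigma^{-1}l^{1/2-1/p}\mu^{-1+\frac{d-1}{2}}$, and comparing it to the left side of \eqref{102} requires $\mu^{(d-1)/r}\le \nu\,l^{1/p}$, which is not what Lemma~\ref{lemma of parameters} provides and is not obviously true for $d\ge 3$ with $r$ close to $1$. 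The paper instead uses \eqref{101}, which says exactly $l^{1/2-1/p}\mu^{(d-1)/2}\le\lambda^{-\beta}$; multiplying by the remaining factor $\sigma^{-1}\mu^{-1}\ll 1$ gives $\|\omega^{(c)}\|_{L^p_TL^\infty_x}\le\bar C\,\sigma^{-1}\mu^{-1}\to 0$, which is precisely the paper's line \eqref{A1}. Your comparison with \eqref{110} for the $L^{\frac{2\alpha}{2\alpha-1}}_TL^q_x$ bound is fine (there the extra $\sigma^{-1}\mu^{-1}$ really does sit on top of \eqref{110}), and your treatment of $(\omega^{(t)},\kappa^{(t)})$ and of $\|\omega^{(c)}\|_{L^2_{t,x}}$ matches the paper.
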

\begin{proof}
    By the definition of $\omega^{(c)}$, Proposition \ref{mikado} and Lemma \ref{lemma of akbk}, we have 
    \begin{align}
        \|\omega^{(c)}\|_{L^p_TL^\infty_x}&\lesssim\sigma^{-1}\sum_{k\in\Lambda} \|\nabla a_k\|_{L^p_TL^\infty_x}\|\Omega^\mu_k(\sigma \cdot)\|_{L^\infty_x}\nonumber\\&\leq\bar{C}\sigma^{-1}l^{\frac{1}{2}-\frac{1}{p}}\mu^{-1+\frac{d-1}{2}}\nonumber\\&\leq\bar{C}\sigma^{-1}\mu^{-1},\label{A1} 
    \end{align}
    and
    \begin{align}
        \|\omega^{(c)}\|_{L^2_{t,x}}&\lesssim\sigma^{-1}\sum_{k\in\Lambda} \|\nabla a_k\|_{L^2_TL^\infty_x}\|\Omega^\mu_k(\sigma \cdot)\|_{L^2_x}\nonumber\\&\leq\bar{C}\sigma^{-1}\mu^{-1}.\label{A2} 
    \end{align}
    By the definition of $(\omega^{(t)},\kappa^{(t)})$, Proposition \ref{prop of gl} and Proposition \ref{mikado}, we deduce
    \begin{align}
        \|(\omega^{(t)},\kappa^{(t)})\|_{L^\infty_{t,x}}\leq\bar{C}\nu^{-1}\|h\|_{L^\infty_{T}}\leq\bar{C}\nu^{-1}.\label{A3}
    \end{align}

    Similarly, if $1<\alpha<\frac{d+1}{2}$, we have
    \begin{align}
        \|\omega^{(c)}\|_{{L^\frac{2\alpha}{2\alpha-1}_TL^q_x}}&\lesssim\sigma^{-1}\sum_{k\in\Lambda} \|\nabla a_k\|_{L^\frac{2\alpha}{2\alpha-1}_TL^\infty_x}\|\Omega^\mu_k(\sigma \cdot)\|_{L^q_x}\nonumber\\
        &\leq\bar{C}\sigma^{-1}\mu^{-1+\frac{d-1}{2}-\frac{d-1}{q}}l^{\frac{1}{2}-\frac{2\alpha-1}{2\alpha}}\nonumber\\&\leq\bar{C}\sigma^{-1}\mu^{-1}.\label{A4}
    \end{align}
    
    Thus, if $\lambda$ is sufficiently large, (\ref{A1}), (\ref{A2}), (\ref{A3}) and (\ref{A4}) give the desired results.

\end{proof}

Combining Proposition (\ref{prop of wp}) and Proposition (\ref{prop of Wc}), we easily derive the following proposition.
\begin{prop}\label{propperturbation}
    Let $\delta>0$. If $\lambda$ is sufficiently large, then we have
    \begin{gather*}
        \|(\omega,\kappa)\|_{L^p_TL^\infty_x}\leq\delta,\\
         \|(\omega,\kappa)\|_{L^2_TL^2_x}\lesssim\|(\bar{R},\bar{S})\|_{L^1_{t,x}}^{\frac{1}{2}}.
    \end{gather*}
    Moreover, if $1<\alpha<\frac{d+1}{2}$, we have
    \begin{gather*}
        \|(\omega,\kappa)\|_{{L^\frac{2\alpha}{2\alpha-1}_TL^q_x}}\leq \delta.
    \end{gather*}
\end{prop}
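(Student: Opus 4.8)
The plan is to derive Proposition~\ref{propperturbation} by the triangle inequality from Proposition~\ref{prop of wp} and Proposition~\ref{prop of Wc}, since all the quantitative work has already been done in those two results. Recall from Section~\ref{section of convex} the decompositions $\omega=\omega^{(p)}+\omega^{(c)}+\omega^{(t)}$ and $\kappa=\mathbb{P}_{\neq0}\kappa^{(p)}+\kappa^{(t)}$. For the $L^p_TL^\infty_x$ estimate I would split
\[
\|(\omega,\kappa)\|_{L^p_TL^\infty_x}\le\|(\omega^{(p)},\mathbb{P}_{\neq0}\kappa^{(p)})\|_{L^p_TL^\infty_x}+\|\omega^{(c)}\|_{L^p_TL^\infty_x}+\|(\omega^{(t)},\kappa^{(t)})\|_{L^p_TL^\infty_x},
\]
bound the first term by $\delta/3$ using Proposition~\ref{prop of wp} (applied with $\delta/3$ in place of $\delta$), the second by $\delta/3$ using Proposition~\ref{prop of Wc}, and for the third use that $[0,T]$ has finite length, so that by Hölder in time and the $L^\infty_{t,x}$ bound of Proposition~\ref{prop of Wc} one has $\|(\omega^{(t)},\kappa^{(t)})\|_{L^p_TL^\infty_x}\le T^{1/p}\|(\omega^{(t)},\kappa^{(t)})\|_{L^\infty_{t,x}}\le\bar C T^{1/p}\nu^{-1}$, which is $\le\delta/3$ once $\lambda$ is large because $\nu=\lambda^{\beta}$. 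The case $1<\alpha<\frac{d+1}{2}$ of the $L^{2\alpha/(2\alpha-1)}_TL^q_x$ bound is handled by the identical three-term split, replacing the time-Hölder step with the embedding $L^\infty_{t,x}\hookrightarrow L^{2\alpha/(2\alpha-1)}_TL^q_x$ on the finite space–time cylinder and invoking the corresponding estimates in Propositions~\ref{prop of wp} and~\ref{prop of Wc}.

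For the $L^2_{t,x}$ estimate the same decomposition gives
\[
\|(\omega,\kappa)\|_{L^2_{t,x}}\le\|(\omega^{(p)},\mathbb{P}_{\neq0}\kappa^{(p)})\|_{L^2_{t,x}}+\|\omega^{(c)}\|_{L^2_{t,x}}+\|(\omega^{(t)},\kappa^{(t)})\|_{L^2_{t,x}},
\]
where the first two terms are $\lesssim\|(\bar R,\bar S)\|_{L^1_{t,x}}^{1/2}$ and $\le\|(\bar R,\bar S)\|_{L^1_{t,x}}^{1/2}$ respectively, directly from Propositions~\ref{prop of wp} and~\ref{prop of Wc}. For the temporal corrector I would again pay a factor $T^{1/2}$ and use $\|(\omega^{(t)},\kappa^{(t)})\|_{L^\infty_{t,x}}\le\bar C\nu^{-1}$ from Proposition~\ref{prop of Wc}; since $(\bar u,\bar\theta,\bar R,\bar S)$, hence $\|(\bar R,\bar S)\|_{L^1_{t,x}}$, is a fixed datum, taking $\lambda$ large enough forces $T^{1/2}\bar C\nu^{-1}\le\|(\bar R,\bar S)\|_{L^1_{t,x}}^{1/2}$. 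Collecting the three contributions and renaming the implicit constant then yields the claimed bound.

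As the phrasing \emph{``we easily derive''} suggests, there is no genuine obstacle here: the Mikado flow estimates of Proposition~\ref{mikado}, the improved Hölder inequality, and the parameter hierarchy verified in Lemma~\ref{lemma of parameters} are all already absorbed into Propositions~\ref{prop of wp} and~\ref{prop of Wc}. The only point deserving a moment's care is that the temporal correctors $(\omega^{(t)},\kappa^{(t)})$ are controlled only in $L^\infty_{t,x}$, so one must convert that into the various $L^p_tL^q_x$ norms by paying a harmless finite power of $T$ and then exploit $\nu=\lambda^{\beta}\gg1$ to drive the remainder below whatever threshold is demanded.
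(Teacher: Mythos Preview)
Your proposal is correct and matches the paper's approach exactly: the paper simply states that the result follows by combining Propositions~\ref{prop of wp} and~\ref{prop of Wc}, and you have spelled out the triangle-inequality details (including the harmless $T^{1/p}$ factor needed to pass from the $L^\infty_{t,x}$ bound on the temporal correctors to the required mixed norms). The only tacit assumption, which the paper also makes, is that $\|(\bar R,\bar S)\|_{L^1_{t,x}}>0$, since otherwise no perturbation is needed and the proposition is vacuous.
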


\subsection{Estimate on Reynolds stress error}\label{errorsection}
Having completed the estimates for the perturbation, we now turn our attention to estimating the Reynolds errors $\tilde{R}$ and $\tilde{S}$ in this part.
\begin{prop}\label{propofRosc}
    Let $\delta>0$. If $\lambda$ is sufficiently large, then we have
    \begin{gather*}
        \|(R_{osc,t},S_{osc,t})\|_{L^1_TL^r_x}\leq\delta,\\
        \|(R_{osc,x},S_{osc,x})\|_{L^1_TL^r_x}\leq\delta,\\
        \|(R_{far},S_{far})\|_{L^1_TL^r_x}\leq\delta.
    \end{gather*}
\end{prop}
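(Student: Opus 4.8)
The plan is to estimate each of the three oscillation errors separately by writing out their explicit definitions from Section \ref{section of convex}, applying the product rule and the antidivergence operator bounds, and then reducing everything to the parameter inequalities collected in Lemma \ref{lemma of parameters}. Throughout I will use $L^1_TL^r_x$ as the target norm, exploit that $g_l(\nu\cdot)$ has $L^1_T$-norm $\lesssim l^{-1/2}$ (from Proposition \ref{prop of gl} with $p=1$), that $h_l$ is bounded by $1$, and that the Mikado flows satisfy the $L^p$-bounds of Proposition \ref{mikado}. The constants $\bar{C}$ absorb everything depending on $(\bar u,\bar\theta,\bar R,\bar S)$ but not on the parameters, so the whole point is to extract a negative power $\lambda^{-\beta}$ and then send $\lambda\to\infty$.

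For the temporal oscillation error, recall $R_{osc,t}=\nu^{-1}h_l(\nu t)\partial_t\bar R$ and $S_{osc,t}=\nu^{-1}h_l(\nu t)\partial_t\bar S$. Since $\|h_l\|_{L^\infty_T}\leq1$ and $\partial_t\bar R,\partial_t\bar S$ are smooth fixed functions, one gets immediately $\|(R_{osc,t},S_{osc,t})\|_{L^1_TL^r_x}\leq \bar C\nu^{-1}=\bar C\lambda^{-\beta}$, which is $\leq\delta$ for $\lambda$ large; here I use inequality (\ref{105}) in the form $\nu^{-1}\leq\lambda^{-\beta}$ (indeed $\nu=\lambda^\beta$ directly). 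For the far-interaction error, recall from (\ref{DEFofRfar}) and (\ref{DEFofSfar}) that $R_{far}=\sum_{k\neq k'}\mathcal R\,\mathrm{div}(a_ka_{k'}\Psi^\mu_k(\sigma x)\Psi^\mu_{k'}(\sigma x)e_k\otimes e_{k'})$ and similarly for $S_{far}$. Since $\mathcal R\,\mathrm{div}$ gains one derivative, a stationary-phase / commutator estimate turns $\mathcal R\,\mathrm{div}$ acting on a product oscillating at frequency $\sigma$ into roughly $\sigma^{-1}$ times derivatives landing on the slow amplitude plus the $L^r$-size of $\Psi^\mu_k\Psi^\mu_{k'}$; using the cross-term bound $\|\Psi^\mu_k\Psi^\mu_{k'}\|_{L^r}\lesssim\mu^{d-1-\frac dr}$ from part (4) of Proposition \ref{mikado} together with $\|a_k\|_{L^2_TL^\infty_x}\lesssim\bar C$ and the improved Hölder inequality, one arrives at $\|(R_{far},S_{far})\|_{L^1_TL^r_x}\leq\bar C\sigma^{-1}\mu^{d-1-\frac{d-1}{r}}$, which is exactly controlled by $\lambda^{-\beta}$ via inequality (\ref{104}).

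The spatial oscillation error is the delicate one. From (\ref{DEFofRoscx}) and (\ref{DEFofSoscx}), $R_{osc,x}=\sum_k\mathcal B(\nabla(a_k^2),\mathbb P_{\neq0}(\Psi^\mu_k(\sigma x)^2 e_k\otimes e_k))$ and $S_{osc,x}=\sum_k\tilde{\mathcal B}(e_k\nabla(a_kb_k),\mathbb P_{\neq0}\Psi^\mu_k(\sigma x)^2)$. I would invoke the bilinear antidivergence estimates of Proposition \ref{bilinear B}: $\mathcal B$ applied to a slow function times a mean-zero fast function of frequency $\sigma\mu$ yields a factor $(\sigma\mu)^{-1}$ (modulo lower-order terms from higher derivatives hitting $\Psi^\mu_k(\sigma\cdot)^2$), times $\|\nabla(a_k^2)\|_{L^1_TL^\infty_x}\lesssim\bar C\,l^{1/2}$ (from Lemma \ref{lemma of akbk} with $r=1$, $m=1$), times $\|\Psi^\mu_k(\sigma\cdot)^2\|_{L^r_x}\lesssim\mu^{d-1-\frac{d}{r}}\lesssim\mu^{d-1-\frac{d-1}{r}}$. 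Collecting, $\|(R_{osc,x},S_{osc,x})\|_{L^1_TL^r_x}\leq\bar C(\sigma\mu)^{-1}l^{1/2}\mu^{d-1-\frac{d-1}{r}}$, and a comparison with the exponent budget shows this is $\leq\lambda^{-\beta}$ — essentially the content of inequality (\ref{103}) once one notes $l^{1/2}(\sigma\mu)^{-1}\mu^{d-1-\frac{d-1}{r}}\leq l^{-1/2}(\sigma\mu)^{2\alpha-1}\mu^{\frac{d-1}{2}-\frac{d-1}{r}}$ after absorbing constants, or more directly from the explicit parameter choices. The main obstacle, and the place needing care, is precisely this spatial oscillation term: one must make sure that all the lower-order contributions (where $\mathcal B$'s expansion puts extra derivatives on the fast factor, each costing $\sigma\mu$ but each also gaining a $(\sigma\mu)^{-1}$ from $\mathcal B$, or where the $L^r$-norm of $\Psi^\mu_k(\sigma\cdot)^2$ must be handled with the concentration scaling) are genuinely negligible, and that the gain $(\sigma\mu)^{-1}$ together with the loss $l^{1/2}$ from differentiating $a_k$ still beats the growth $\mu^{d-1-\frac{d-1}{r}}$ — this is exactly why the exponent of $\sigma$ in Case A was tuned to make (\ref{103}) hold with a strict $\lambda^{-\beta}$ margin. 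Once the three bounds $\bar C\lambda^{-\beta}$ are in hand, choosing $\lambda$ large enough (depending on $\delta$ and $(\bar u,\bar\theta,\bar R,\bar S)$) makes each $\leq\delta$, completing the proof.
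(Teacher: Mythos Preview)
Your overall plan is right, but two of the three estimates are not argued correctly and would not close as written.

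\textbf{Far interaction.} You cannot extract a factor $\sigma^{-1}$ from $\mathcal R\,\mathrm{div}$: this is a zero-order Calder\'on--Zygmund operator, bounded on $L^r_x$ but not smoothing. (For $S_{far}$ there is no operator at all --- it is defined directly as the product.) The paper's argument is simpler than what you propose: just bound $\|\mathcal R\,\mathrm{div}(\cdot)\|_{L^r_x}\lesssim\|\cdot\|_{L^r_x}$, then use H\"older in time with $\|a_k\|_{L^2_TL^\infty_x},\|b_{k'}\|_{L^2_TL^\infty_x}\leq\bar C$ together with the cross-term estimate of Proposition~\ref{mikado}(4), $\|\Psi^\mu_k\Psi^\mu_{k'}\|_{L^r_x}\lesssim\mu^{d-1-\frac{d}{r}}$ (note $d/r$, not $(d-1)/r$). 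This gives $\bar C\,\mu^{d-1-d/r}\leq\bar C\,\lambda^{\beta-1}$ by (\ref{chooseofr}), not by (\ref{104}).

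\textbf{Spatial oscillation.} The bilinear operator $\mathcal B$ (resp.\ $\tilde{\mathcal B}$) gains one antidivergence on the fast factor, and $\mathbb P_{\neq0}\Psi^\mu_k(\sigma\cdot)^2$ is $\sigma^{-1}$-periodic with zero mean, so Proposition~\ref{theo of antidiv} gives a gain of $\sigma^{-1}$, not $(\sigma\mu)^{-1}$: $\|\mathcal G\,\mathbb P_{\neq0}\Psi^\mu_k(\sigma\cdot)^2\|_{L^r_x}\lesssim\sigma^{-1}\|\Psi^\mu_k\|_{L^{2r}_x}^2\lesssim\sigma^{-1}\mu^{d-1-\frac{d-1}{r}}$. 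Your amplitude bound is also off: $a_k^2$ and $a_kb_k$ carry $g_l^2$, and $\|g_l^2\|_{L^1_T}=\|g_l\|_{L^2_T}^2=1$, so $\|\nabla(a_kb_k)\|_{L^1_TC^1_x}\leq\bar C$ with no power of $l$ (Lemma~\ref{lemma of akbk} applies to $a_k$, not to $a_k^2$). The correct bound is thus $\bar C\,\sigma^{-1}\mu^{d-1-\frac{d-1}{r}}\leq\bar C\lambda^{-\beta}$ by (\ref{104}); inequality (\ref{103}) plays no role here --- it is reserved for the fractional Laplacian in $R_{lin},S_{lin}$.

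The temporal oscillation part is fine.
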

\begin{proof}
    The first estimate follows easily from the definition of $R_{osc,t}$ and $S_{osc,t}$ and Proposition \ref{prop of gl}, so we omit the proof.

     In the view of (\ref{DEFofSoscx}), Proposition \ref{mikado}, Lemma \ref{lemma of akbk}, Lemma \ref{bilinear B} and Lemma \ref{theo of antidiv}, we deduce
    \begin{align*}
        \|S_{osc,x}\|_{L^1_TL^r_x}&=\|\sum_{k\in\Lambda}\tilde{\mathcal{B}}(e_k\nabla(a_kb_k),\mathbb{P}_{\neq0}\Psi_k^\mu (\sigma x)^2)\|_{L^1_TL^r_x}\\&\lesssim\sum_{k\in\Lambda}\|\nabla(a_kb_k)\|_{L^1_TC^1_x}\|\mathcal{G}\,\mathbb{P}_{\neq0}\Psi_k^\mu (\sigma x)^2\|_{L^r_x}\\&\lesssim\sigma^{-1}\sum_{k\in\Lambda}\|\nabla(a_kb_k)\|_{L^1_TC^1_x}\|\Psi_k^\mu\|_{L^{2r}_x}^2\\&\leq\bar{C}\sigma^{-1}\mu^{d-1-\frac{d-1}{r}}\\&\leq\bar{C}\lambda^{-\beta}.
    \end{align*}
    Since the estimate of $R_{osc,x}$ is similar, for $\lambda$ sufficiently large, there holds
    \begin{gather*}
        \|(R_{osc,x},S_{osc,x})\|_{L^1_TL^r_x}\leq\delta.
    \end{gather*}
    
    By (\ref{DEFofSfar}), Proposition \ref{mikado} and Lemma \ref{lemma of akbk}, we have
    \begin{align*}
        \|S_{far}\|_{L^1_TL^r_x}&\lesssim\sum_{k\neq k'}\|a_k\|_{L^2_TL^\infty_x}\|b_{k'}\|_{L^2_TL^\infty_x}\|\Psi_k^\mu\Psi_{k'}^\mu\|_{L^r_x}\nonumber\\&\leq\bar{C}\mu^{d-1-\frac{d}{r}}\nonumber\\&\leq\bar{C}\lambda^{\beta-1},
    \end{align*}
    where the last inequality follows from (\ref{chooseofr}) and estimate of $R_{far}$ is similar. Taking $\lambda$ sufficiently large, there holds
    \begin{gather*}
        \|(R_{far},S_{far})\|_{L^1_TL^r_x}\leq\delta.
    \end{gather*}
\end{proof}

\begin{prop}
     Let $\delta>0$. If $\lambda$ is sufficiently large, then we have
    \begin{gather*}
        \|(R_{lin},S_{lin})\|_{L^1_TL^r_x}\leq\delta.
    \end{gather*}
\end{prop}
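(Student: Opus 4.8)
The goal is to control the two linear errors
\[
R_{lin}=\mathcal{R}\bigl(\partial_t(\omega^{(p)}+\omega^{(c)})+(-\Delta)^{\alpha}\omega+\text{div}(\bar u\otimes\omega+\omega\otimes\bar u)-\kappa e_d\bigr),\qquad
S_{lin}=\mathcal{G}\bigl(\partial_t\kappa^{(p)}+(-\Delta)^{\alpha}\kappa+\text{div}(\bar u\kappa+\omega\bar\theta)\bigr)
\]
in $L^1_TL^r_x$. The plan is to split each into the natural pieces (a time-derivative term, a fractional-dissipation term, a transport/drift term, and the buoyancy term $-\kappa e_d$ in the velocity case) and estimate each with the bilinear antidivergence bounds of Proposition \ref{bilinear B} / Theorem \ref{theo of antidiv}, Proposition \ref{mikado}, Lemma \ref{lemma of akbk} and Proposition \ref{prop of gl}, reducing everything to a product of powers of the parameters $l,\nu,\sigma,\mu$ that is then absorbed by Lemma \ref{lemma of parameters}.

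First I would handle the dissipative term. Since $(-\Delta)^{\alpha}$ falls on $\omega$, whose highest spatial frequency is $\sim\sigma\mu$, one gains a factor $(\sigma\mu)^{2\alpha}$, while $\mathcal{R}$ gains one derivative back, i.e. $(\sigma\mu)^{-1}$; placing the amplitude in $L^1_T$ costs $l^{-1/2}$ (from $\|g_l(\nu\cdot)\|_{L^1_T}\lesssim l^{-1/2}$) and the Mikado profile in $L^r$ costs $\mu^{\frac{d-1}{2}-\frac{d-1}{r}}$. This is precisely the quantity $l^{-\frac12}(\sigma\mu)^{2\alpha-1}\mu^{\frac{d-1}{2}-\frac{d-1}{r}}$ bounded by $\lambda^{-\beta}$ in \eqref{103}, so this is the term that drives the parameter choice and, together with the oscillation error from the convection term, is the constraint that fails when $\alpha\ge\frac{d+1}{2}$ (cf.\ the Remark). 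The transport terms $\text{div}(\bar u\otimes\omega+\omega\otimes\bar u)$ and $\text{div}(\bar u\kappa)$ are easier: $\mathcal R\,\text{div}$ is Calderón–Zygmund, $\bar u$ contributes only a $\bar C$, and one is left with $\|\omega\|_{L^1_TL^r_x}$, which by Proposition \ref{mikado} and Lemma \ref{lemma of akbk} is $\lesssim l^{-1/2}\mu^{\frac{d-1}{2}-\frac{d-1}{r}}$, handled by \eqref{103} (with $\sigma\mu$ replaced by $1$, hence strictly smaller). The term $\text{div}(\omega\bar\theta)$ in $S_{lin}$ is identical with $\kappa$ in place of $\omega$. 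The buoyancy term $\mathcal R(\kappa e_d)$ is a zeroth-order term: $\mathcal R$ gains a derivative, so one gets $(\sigma\mu)^{-1}$ times $\|\kappa^{(p)}\|_{L^1_TL^r_x}\lesssim l^{-1/2}\mu^{\frac{d-1}{2}-\frac{d-1}{r}}$, which is even smaller.

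The $\partial_t$ terms require a little more care because time differentiation can hit either the amplitude $a_k$ (cost $\nu l$, from Lemma \ref{lemma of akbk}) or the temporal profile inside $\omega^{(t)},\kappa^{(t)}$. For $\partial_t\omega^{(p)}$ one writes $\partial_t\omega^{(p)}=\sum_k(\partial_t a_k)\Psi_k^\mu(\sigma x)e_k$; applying $\mathcal R$ gains $(\sigma\mu)^{-1}$, and the estimate becomes $\sigma^{-1}\mu^{-1}(\nu l)\,l^{-1/2}\mu^{\frac{d-1}{2}-\frac{d-1}{r}}$, which is dominated by the left-hand side of \eqref{102} (and hence by $\lambda^{-\beta}$). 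The term $\partial_t\omega^{(c)}$ carries an extra $\sigma^{-1}$ and is strictly better. For $\partial_t\omega^{(t)}$ and $\partial_t\kappa^{(t)}$ one recalls that these correctors were designed precisely so that $\partial_t\omega^{(t)}$ and $\partial_t\kappa^{(t)}$ cancel the low-frequency, high-temporal-frequency outputs of $\text{div}(\omega^{(p)}\otimes\omega^{(p)})$ and $\text{div}(\omega^{(p)}\kappa^{(p)})$ in \eqref{666}; what remains after that cancellation is $R_{osc,t},S_{osc,t}$ (already controlled in Proposition \ref{propofRosc}) plus the pressure term $\nu^{-1}\nabla\Delta^{-1}\text{div}\,\text{div}(\partial_t\bar R\,h_l(\nu t))$, which is a gradient and is absorbed into $\tilde p$, so it does not enter $R_{lin}$. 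Consequently $\partial_t$ contributes to $R_{lin}$ only through $\partial_t(\omega^{(p)}+\omega^{(c)})$ as above. Collecting all pieces, each is $\le\bar C\lambda^{-\beta}$, so choosing $\lambda$ large enough gives $\|(R_{lin},S_{lin})\|_{L^1_TL^r_x}\le\delta$, which completes the proof. The main obstacle is the dissipative term: it is the only place where the full $(\sigma\mu)^{2\alpha-1}$ power appears, and reconciling it with the oscillation error forces the bound $\alpha<\frac{d+1}{2}$ and the particular exponents in the choice of $l,\sigma,\mu$.
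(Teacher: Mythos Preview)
Your proposal is correct and matches the paper's approach term by term: the same split into time-derivative, dissipation, transport, and buoyancy pieces, each reduced via Proposition~\ref{mikado} and Lemma~\ref{lemma of akbk} to the parameter inequalities \eqref{102}, \eqref{103}, \eqref{105} of Lemma~\ref{lemma of parameters}. The only point worth sharpening is your assertion that ``$\mathcal R$ gains $(\sigma\mu)^{-1}$'' on $\partial_t\omega^{(p)}$ and on $\kappa e_d$: the paper realises this gain not through $\mathcal R$ alone (Proposition~\ref{theo of antidiv} gives only $L^r$-boundedness on products), but explicitly via the potential identity $\Psi_k^\mu(\sigma\cdot)=\sigma^{-1}\mathrm{div}\bigl[(\nabla\Phi_k^\mu)(\sigma\cdot)\bigr]$ from Proposition~\ref{mikado}(1) combined with the $L^r$-boundedness of $\mathcal G\,\mathrm{div}$; the resulting bound $\sigma^{-1}(\nu l)l^{-1/2}\mu^{-1+\frac{d-1}{2}-\frac{d-1}{r}}$ is exactly the one you state, and for the transport term the paper instead places $(\omega,\kappa)$ in $L^p_TL^\infty_x$ and invokes Proposition~\ref{propperturbation}, which is an equally valid route.
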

\begin{proof}
We first divide the linear error $S_{lin}$ into three parts:
    \begin{align*}
        \|S_{lin}\|_{L^1_TL^r_x}&\leq\|\mathcal{G}\partial_t\kappa^{(p)}\|_{L^1_TL^r_x}+\|\mathcal{G}(-\Delta)^{\alpha}\kappa\|_{L^1_TL^r_x}+\|\mathcal{G}\,\text{div}(\bar{u}\kappa+\omega\bar{\theta})\|_{L^1_TL^r_x}\\
        &\triangleq L_1+L_2+L_3.
    \end{align*}

\noindent
\textbf{Estimate of} ${L_1}$:

Using (\ref{DEFofKP}), Proposition \ref{mikado}, Lemma \ref{lemma of akbk}, Lemma \ref{theo of antidiv} and $L^r_x$ boundedness of $\mathcal{G}\,\text{div}$, we deduce  
\begin{align}
    L_1&=\|\mathcal{G}\sum_{k\in\Lambda}\partial_tb_k\Psi_k^\mu (\sigma \cdot)\|_{L^1_TL^r_x}\nonumber\\
    &=\sigma^{-1}\left\|\mathcal{G}\left(\sum_{k\in\Lambda}\text{div}(\partial_tb_k\cdot\nabla\Phi_k^\mu (\sigma \cdot))-\nabla\partial_tb_k\cdot\nabla\Phi_k^\mu (\sigma \cdot)\right)\right\|_{L^1_TL^r_x}\nonumber\\&\lesssim \sigma^{-1}\left(\sum_{k\in\Lambda}\|\partial_tb_k\|_{L^1_TL^\infty_x}\|\nabla\Phi_k^\mu (\sigma \cdot)\|_{L^r_x}+\|\nabla\partial_tb_k\|_{L^1_TL^\infty_x}\|\nabla\Phi_k^\mu (\sigma \cdot)\|_{L^r_x}\right)\nonumber\\
    &\leq\bar{C}\sigma^{-1}(\nu l)l^{-\frac{1}{2}}\mu^{-1+\frac{d-1}{2}-\frac{d-1}{r}}\nonumber\\
    &\leq\bar{C}\lambda^{-\beta},\label{A}
\end{align}
where the last inequality follows from Lemma \ref{lemma of parameters}.

\noindent
\textbf{Estimate of} ${L_2}$:

By $L^r_x$ boundedness of $\mathcal{G}\,\nabla$, we deduce
\begin{align*}
    L_2&\lesssim\||\nabla|^{2\alpha-1}\kappa^{(p)}\|_{L^1_TL^r_x}+\||\nabla|^{2\alpha-1}\kappa^{(t)}\|_{L^1_TL^r_x}.
\end{align*}
Thanks to the interpolation estimate, Proposition \ref{mikado}, Lemma \ref{lemma of parameters}, Lemma \ref{lemma of akbk}, we have
\begin{align}
     L_2\lesssim\bar{C}l^{-\frac{1}{2}}(\sigma\mu)^{2\alpha-1}\mu^{\frac{d-1}{2}-\frac{d-1}{r }}+\bar{C}l^{-\frac{1}{2}}\leq\bar{C}\lambda^{-\beta}.\label{B}
\end{align}

\noindent
\textbf{Estimate of} ${L_3}$:

By $L^r_x$ boundedness of $\mathcal{G}\,\text{div}$, we have
\begin{align}
    L_3\lesssim\|\bar{u}\kappa+\omega\bar{\theta}\|_{L^1_TL^r_x}\lesssim\bar{C}\|(\omega,\kappa)\|_{L^p_TL^\infty_x}.\label{C}
\end{align}

Finally, combining (\ref{A}), (\ref{B}), (\ref{C}) and Proposition \ref{propperturbation} implies the desired result of $S_{lin}$ if $\lambda$ is sufficiently large. Since the estimate of $R_{lin}$ follows a similar logic, we omit the corresponding proof.
\end{proof}

\begin{prop}\label{propofRcor}
     Let $\delta>0$. If $\lambda$ is sufficiently large, then we have
    \begin{gather*}
        \|(R_{cor},S_{cor})\|_{L^1_TL^r_x}\leq\delta.
    \end{gather*}
\end{prop}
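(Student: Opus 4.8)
The plan is to bound the two correction errors $R_{cor}$ and $S_{cor}$ separately, exploiting the $L^r_x$-boundedness of the antidivergence operators $\mathcal{R}\,\mathrm{div}$ and $\mathcal{G}\,\mathrm{div}$ (as Calder\'on--Zygmund operators, cf.\ Theorem \ref{theo of antidiv}), and then estimating the resulting products in $L^1_TL^r_x$ by H\"older's inequality. Concretely, from \eqref{DEFofRcor} I would write
\begin{align*}
    \|R_{cor}\|_{L^1_TL^r_x}&\lesssim\|(\omega^{(c)}+\omega^{(t)})\otimes\omega\|_{L^1_TL^r_x}+\|\omega^{(p)}\otimes(\omega^{(c)}+\omega^{(t)})\|_{L^1_TL^r_x}\\
    &\lesssim\left(\|\omega^{(c)}\|_{L^2_TL^{2r}_x}+\|\omega^{(t)}\|_{L^2_TL^{2r}_x}\right)\left(\|\omega^{(p)}\|_{L^2_TL^{2r}_x}+\|\omega^{(c)}\|_{L^2_TL^{2r}_x}+\|\omega^{(t)}\|_{L^2_TL^{2r}_x}\right),
\end{align*}
and analogously for $S_{cor}$ using \eqref{DEFofScor}. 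Since the ``bad'' factor is always one of the correctors $\omega^{(c)},\omega^{(t)},\kappa^{(t)}$, which are already shown to be small (Proposition \ref{prop of Wc}), while the remaining factor is controlled by the uniform $L^2_TL^2_x$ bound on $(\omega,\kappa)$ from Proposition \ref{propperturbation} together with the concentrated-Mikado estimates of Proposition \ref{mikado}, the product will carry a negative power of $\lambda$.

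The key steps, in order: first, expand $R_{cor}$ and $S_{cor}$ via \eqref{DEFofRcor} and \eqref{DEFofScor} and apply $L^r_x$ boundedness of $\mathcal{R}\,\mathrm{div}$ and $\mathcal{G}\,\mathrm{div}$ to remove the antidivergence; second, split into bilinear pieces and apply H\"older in time and space so that each term is a product of an $L^{p_1}_TL^{q_1}_x$ norm of a corrector and an $L^{p_2}_TL^{q_2}_x$ norm of a principal (or full) perturbation with $\frac1{p_1}+\frac1{p_2}=1$, $\frac1{q_1}+\frac1{q_2}=\frac1r$; third, plug in the explicit bounds $\|\omega^{(c)}\|_{L^p_TL^\infty_x}\lesssim\sigma^{-1}\mu^{-1}$, $\|(\omega^{(t)},\kappa^{(t)})\|_{L^\infty_{t,x}}\lesssim\nu^{-1}$ from Proposition \ref{prop of Wc}, and the $L^2_TL^2_x$ (and $L^p_TL^\infty_x$) control of $\omega^{(p)},\kappa^{(p)},\omega,\kappa$ from Propositions \ref{prop of wp}--\ref{propperturbation}, together with Proposition \ref{mikado} for the $L^{2r}_x$ norms of $\Psi_k^\mu(\sigma\cdot)$ when an intermediate exponent is needed; fourth, invoke Lemma \ref{lemma of parameters} (in particular \eqref{104}, \eqref{105}) and the choice of parameters to see that every resulting exponent of $\lambda$ is negative, hence $<\delta$ once $\lambda$ is large.

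The main obstacle is bookkeeping the exponents: each corrector contains factors of $g_l(\nu\cdot)$ or $h_l(\nu\cdot)$ (whose $L^p_T$ norms scale like $l^{1/2-1/p}$, with $l$ a large power of $\lambda$), a derivative hitting $\bar R$ or $a_k$ (costing $\nu l$ or $\sigma$), and the spatial building block $\Psi_k^\mu(\sigma\cdot)$ (whose $L^{q}_x$ norm scales like $\mu^{(d-1)/2-(d-1)/q}$), so one must choose the intermediate Lebesgue exponents $q_1,q_2$ carefully and check that the net power of $\lambda$ stays negative — precisely the role of the inequalities collected in Lemma \ref{lemma of parameters}. I expect no new difficulty beyond this: structurally $R_{cor}$ and $S_{cor}$ are ``lower order'' because they are quadratic in perturbations with at least one small corrector factor, and unlike the linear error $R_{lin}$ they involve no time derivative of $\bar R$ or dissipation term that could threaten the balance flagged in the Remark for $\alpha\geq\frac{d+1}{2}$. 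I would present the estimate for $S_{cor}$ in detail and remark that $R_{cor}$ is entirely analogous.
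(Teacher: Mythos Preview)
Your proposal is correct and follows essentially the same route as the paper: use the $L^r_x$ boundedness of $\mathcal{R}\,\mathrm{div}$ and $\mathcal{G}\,\mathrm{div}$, then split each bilinear piece by H\"older so that one factor is a small corrector and the other is the bounded principal/full perturbation, and finally invoke Lemma \ref{lemma of parameters} (specifically \eqref{104} and $\nu^{-1}=\lambda^{-\beta}$) for the negative power of $\lambda$.

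The only difference is in the H\"older split itself. You propose the symmetric choice $L^2_TL^{2r}_x\times L^2_TL^{2r}_x$, whereas the paper uses the asymmetric split $L^2_TL^2_x\times L^2_TL^m_x$ with $\tfrac{1}{r}=\tfrac{1}{2}+\tfrac{1}{m}$. The paper's choice is slightly cleaner because the large factor $\|\omega\|_{L^2_{t,x}}$ (and $\|\kappa^{(p)}\|_{L^2_{t,x}}$) is already uniformly bounded by Proposition \ref{propperturbation} without any extra Mikado growth, and only the corrector needs a new $L^m_x$ estimate, namely $\|\omega^{(c)}\|_{L^2_TL^m_x}\leq\bar{C}\sigma^{-1}\mu^{-1+d-1-\frac{d-1}{r}}$. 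Your symmetric split works too but forces you to compute $\|\omega^{(p)}\|_{L^2_TL^{2r}_x}\lesssim\bar{C}\mu^{\frac{d-1}{2}-\frac{d-1}{2r}}$, which carries a small positive power of $\lambda$; the product with the corrector still lands on the same exponent $\sigma^{-1}\mu^{-1+d-1-\frac{d-1}{r}}$, and the $\omega^{(t)}$-terms remain $\lesssim\nu^{-1}\mu^{\frac{d-1}{2}-\frac{d-1}{2r}}\leq\lambda^{-\beta/2}$ thanks to \eqref{chooseofr}. Either way the final bound is $\bar{C}\lambda^{-\beta}$, so your argument goes through.
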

\begin{proof}
    By (\ref{DEFofScor}), $L^r_x$ boundedness of $\mathcal{G}\,\text{div}$ and H\"{o}lder's inequality, we deduce
    \begin{align*}
        \|S_{cor}\|_{L^1_TL^r_x}\lesssim(\|\omega\|_{L^2_{t,x}}+\|\kappa^{(p)}\|_{L^2_{t,x}})(\|\omega^{(c)}\|_{L^2_TL^m_x}+\|\omega^{(t)}\|_{L^\infty_{t,x}}+\|\kappa^{(t)}\|_{L^\infty_{t,x}}),
    \end{align*}
    where $q$ satisfies
    \begin{align*}
        \frac{1}{r}=\frac{1}{2}+\frac{1}{m}.
    \end{align*}
    Therefore, due to Proposition \ref{prop of wp}, Proposition \ref{prop of Wc} and Proposition \ref{propperturbation}, we have
    \begin{align}
        \|S_{cor}\|_{L^1_TL^r_x}\leq\bar{C}(\sigma^{-1}\mu^{-1+d-1-\frac{d-1}{r}}+\nu^{-1})\leq\bar{C}\lambda^{-\beta},\label{Scor}
    \end{align}
    Thanks to the estimate of $\omega^{(c)}$:
    \begin{align*}
        \|\omega^{(c)}\|_{L^2_TL^m_x}\lesssim\sigma^{-1}\sum_{k\in\Lambda} \|\nabla a_k\|_{L^2_TL^\infty_x}\|\Omega^\mu_k(\sigma \cdot)\|_{L^m_x}\leq\bar{C}\sigma^{-1}\mu^{-1+d-1-\frac{d-1}{r}}.
    \end{align*}
    Hence, the desired result of $S_{cor}$ follows from (\ref{Scor}) if $\lambda$ is sufficiently large. As usual, the estimate of $R_{cor}$ is similar, so we omit the proof.
\end{proof}

Combining Proposition \ref{propofRosc}-\ref{propofRcor}, we can easily get the following result.
\begin{prop}
    Let $\delta>0$. If $\lambda$ is sufficiently large, then we have
    \begin{gather*}
        \|(\tilde{R},\tilde{S})\|_{L^1_{t,x}}\leq\delta.
    \end{gather*}
\end{prop}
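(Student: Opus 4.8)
The statement follows by combining the term-by-term bounds of Propositions \ref{propofRosc}--\ref{propofRcor} (together with the proposition bounding $(R_{lin},S_{lin})$) with the decomposition
\[
    \tilde{R}=R_{far}+R_{osc,t}+R_{osc,x}+R_{cor}+R_{lin},\qquad
    \tilde{S}=S_{far}+S_{osc,t}+S_{osc,x}+S_{cor}+S_{lin}
\]
recorded at the end of Section \ref{section of convex}. The plan is the following. First I would pass from the norm $L^1_TL^r_x$ to $L^1_{t,x}$: since the exponent $r\in(1,2)$ was fixed once and for all in the choice of parameters and $\mathbb{T}^d$ has finite volume, Hölder's inequality in the space variable gives $\|f\|_{L^1_{t,x}}\le |\mathbb{T}^d|^{1-1/r}\|f\|_{L^1_TL^r_x}$; write $C_d$ for this fixed constant.

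Next I would invoke each of the cited propositions with the target accuracy $\delta$ replaced by $\delta/(10C_d)$. Each of them provides a threshold on $\lambda$ beyond which the $L^1_TL^r_x$ norm of the corresponding error piece is at most $\delta/(10C_d)$; since there are only finitely many pieces, taking $\lambda$ larger than all of these thresholds makes every bound hold simultaneously. (In fact the proofs of those propositions produce bounds of the form $\bar{C}\lambda^{-\beta}$ with $\bar{C}$ independent of $\lambda$, so the common threshold is explicit.) Then the triangle inequality gives
\[
    \|(\tilde{R},\tilde{S})\|_{L^1_{t,x}}
    \le C_d\big(\|R_{far}\|_{L^1_TL^r_x}+\cdots+\|S_{lin}\|_{L^1_TL^r_x}\big)
    \le C_d\cdot 10\cdot\frac{\delta}{10C_d}=\delta,
\]
which is the assertion.

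There is essentially no genuine obstacle at this stage: all the analytic work has already been carried out in the preceding propositions, and the only mildly delicate points are that one needs $r>1$ so that the $L^r_x\hookrightarrow L^1_x$ loss is a finite constant on the torus, and that the several $\lambda$-thresholds must be met at once — both immediate. The truly hard estimates were the spatial oscillation error $(R_{osc,x},S_{osc,x})$ and the linear error $(R_{lin},S_{lin})$, where the parameters $l,\nu,\sigma,\mu$ were tuned in Lemma \ref{lemma of parameters} precisely so that the competing powers of $\lambda$ cancel; that balancing is the heart of the construction and is what forces the restriction $\alpha<\frac{d+1}{2}$.
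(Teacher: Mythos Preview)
Your proposal is correct and matches the paper's own proof, which is the one-line remark that the result follows by combining Propositions \ref{propofRosc}--\ref{propofRcor}. Your added observations (the H\"older step $L^r_x\hookrightarrow L^1_x$ on the torus and the simultaneous choice of $\lambda$-threshold) are the natural details the paper leaves implicit.
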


\section{Proof of main results}\label{section4}
In this section, we give a detailed proof of Proposition \ref{mainProp}, Theorem \ref{maintheo}, Theorem \ref{maintheo2} and Theorem \ref{正面}.

\begin{proof}[\textbf{Proof of Proposition \ref{mainProp}}]
    Let $v$ and $\rho$ be the smooth functions given in Proposition \ref{mainProp}. We define $(u_0,\theta_0,R_0,S_0)$ as follows,
    \begin{gather*}
        u_0=v,\\
        \theta_0=\rho,\\
        R_0=\mathcal{R}\left(\partial_tu_0+(-\Delta)^{\alpha}u_0+\text{div}(u_0\otimes u_0)-\theta_0 e_d\right),\\
        S_0=\mathcal{G}\left(\partial_t\theta_0+(-\Delta)^{\alpha}\theta_0+\text{div}(u_0\theta_0)\right),
    \end{gather*}
    where $\mathcal{R}$ and $\mathcal{G}$ are inverse divergence operator on $\mathbb{T}^d$ defined in Lemma \ref{def of antidiv}.

    Since $(u_0,\theta_0)$ are mean-free, $(u_0,\theta_0,R_0,S_0)$ solves (\ref{e:Boussinesq-Reynold}). Combining Proposition \ref{prop of gluing} and Proposition \ref{convexi}, we can construct a sequence of well-preparedness weak solutions $\{(u_n,\theta_n,R_n,S_n)\}$ the set $I_n$ and length scale $\tau_n$ as follows. For any $n\in\mathbb{N}^*$, $\{(u_n,\theta_n,R_n,S_n)\}$ satisfy
    \begin{gather*}
        \delta_n\triangleq2^{-n}\,\text{min}\{\|(R_{n-1},S_{n-1})\|_{L^1_TL^r_x},\epsilon\},\\
        \|(R_{n},S_{n})\|_{L^1_TL^r_x}\leq \delta_{n},\\
        \|(u_n-u_{n-1},\theta_n-\theta_{n-1})\|_{L^2_{t,x}}\leq M\|(R_{n-1},S_{n-1})\|_{L^1_TL^r_x}\leq M\delta_{n-1},\\
        \|(u_n-u_{n-1},\theta_n-\theta_{n-1})\|_{L^p_TL^{\infty}_x}\leq\delta_n,\\
        \left(u_n(0,x),\theta_n(0,x)\right)= \left(u(0,x),\theta(0,x)\right).
    \end{gather*}
    Moreover, if $1<\alpha<\frac{d+1}{2}$, we have
     \begin{gather*}
         \|(u_n-u_{n-1},\theta_n-\theta_{n-1})\|_{L^{\frac{2\alpha}{2\alpha-1}}_TL^{q}_x}\leq\delta_n,\quad\forall n\in\mathbb{N}^*.
     \end{gather*}
    Therefore, there exist $(u,\theta)\in L^p_TL^{\infty}_x\cap L^2_{t,x}$ such that
    \begin{gather*}
        (u_n,\theta_n)\rightarrow(u,\theta)\quad\text{in\quad} L^p_TL^{\infty}_x\cap L^2_{t,x},\\
        \|(u,\theta)-(v,\rho)\|_{L^p_TL^{\infty}_x\cap L^2_{t,x}}\leq\epsilon,
    \end{gather*}
    and $(u,\theta)$ is weak solution of (\ref{e:boussinesq equation}).
    If $1<\alpha<\frac{d+1}{2}$, $(u,\theta)$ will additionally belong to ${L^{\frac{2\alpha}{2\alpha-1}}_TL^{q}_x}$ and satisfy
    \begin{gather*}
        \|(u,\theta)-(v,\rho)\|_{L^{\frac{2\alpha}{2\alpha-1}}_TL^{q}_x}\leq\epsilon.
    \end{gather*}

    For the structure of the intervals of regularity of $(u,\theta)$, it is similar to \cite[Theorem 1.11]{serrin准则luo} and we give a brief proof. We denote $I_n$ and $\tau_n$ the set and length scale of the well-preparedness of $(u_n,\theta_n,R_n,S_n)$. Note that for any $n\in\mathbb{N}$
    \begin{gather*}
        \text{supp}_t(u_{n+1}-u_n,\theta_{n+1}-\theta_n)\subset I_n,\\
        \tau_{n+1}<\frac{\tau_n}{2}\quad\text{and}\quad I_n\subset I_{n+1}.
    \end{gather*}

    Let 
    \begin{gather*}
        I\triangleq\bigcup_{n\geq0}I_n^c\backslash\{0,1\},
    \end{gather*}
    where the complement is considered with respect to  $[0,T]$. Due to the fact that each $I_n$ is a finite union of closed intervals, $I$ can be expressed as a union of countably many open intervals:
    \begin{gather*}
        I=\bigcup_{i\geq0}(a_i,b_i).
    \end{gather*}
    
    Since for any $m\geq n$, 
    \begin{gather*}
        (I_n)^c\subset (I_m)^c\subset \left(\text{supp}_t(u_{m+1}-u_m,\theta_{m+1}-\theta_m)\right)^c,
    \end{gather*}
     we have $(u,\theta)=(u_{n},\theta_{n})$ on $I_n^c$ for each $n\in\mathbb{N}$. Hence we deduce that $(u,\theta)\in C^\infty(I\times\mathbb{T}^d)$. 
     
     As each $I_n$ is a union of at most $\tau_n^{-\epsilon}$ many closed intervals of length $5\tau_n$, we have
     \begin{gather*}
         d_{\mathcal{H}}([0,T]\backslash I)=d_{\mathcal{H}}(\bigcap_{n\geq0}I_n)=d_{\mathcal{H}}(\varlimsup_{n\rightarrow\infty}I_n)\leq\epsilon,
     \end{gather*}
     which gives the desired result for the Hausdorff dimension bound.
\end{proof}

With the aid of Proposition \ref{mainProp}, non-uniqueness of weak solutions for the Boussinesq system (\ref{e:boussinesq equation}) can be derived.
\begin{proof}[\textbf{Proof of Theorem\ref{maintheo}}]
    Let $d\geq2$ be the dimension, $1\leq\alpha<\frac{d+1}{2}$, $1\leq p<\frac{2\alpha}{2\alpha-1}$.
    Without loss of generality, we suppose that $(u,\theta)\in L^p_TL^\infty_x$ is a smooth weak solution of (\ref{e:boussinesq equation}).
    Let $f$ be a smooth function on $[0,T]\times\mathbb{T}^d$ with
    \begin{gather*}
        \|f\|_{L^p({[T/2,T]};L^2_x)}=1.
    \end{gather*}
    For any $m\in\mathbb{N}$, We define 
    \begin{align*}
        u_m=u\,,\,
        \theta_m=\theta+m(1-\chi)f,
    \end{align*}
    where $\chi$ is smooth function on $[0,T]$ satisfying
    \begin{gather*}
        \chi(t)=\begin{cases}
            1,\quad\text{if}\,\,0\leq t\leq\frac{T}{4},\\0,\quad\text{if}\,\,\frac{T}{2}\leq t\leq T.
        \end{cases}
    \end{gather*}
    Then by Proposition \ref{mainProp}, there exist weak solutions $\{(\tilde{u}_m,\tilde{\theta}_m)\}$ of (\ref{e:boussinesq equation}) satisfying
    \begin{align*}
        \tilde{u}_m(0)=u_m(0)=u(0)\quad,\quad\|\tilde{u}_m-u_m\|_{L^p_TL^\infty_x}\leq\frac{1}{4},\\
        \tilde{\theta}_m(0)=\theta_m(0)=\theta(0)\quad,\quad\|\tilde{\theta}_m-\theta_m\|_{L^p_TL^\infty_x}\leq\frac{1}{4}.
    \end{align*}
    Moreover, we have for any $m\in\mathbb{N}$
    \begin{align*}
        \|\tilde{\theta}_m\|_{L^p_TL^2_x}&\geq \|\theta_m\|_{L^p_TL^2_x}-\|\tilde{\theta}_m-\theta_m\|_{L^p_TL^\infty_x}\\
        &\geq m\|f\|_{L^p({[T/2,T]};L^2_x)}-\|\theta\|_{L^p_TL^\infty_x}-\|\tilde{\theta}_m-\theta_m\|_{L^p_TL^\infty_x}\\
        &\geq m-\|\theta\|_{L^p_TL^\infty_x}-\frac{1}{4},
    \end{align*}
    and for any $m\neq m'\in\mathbb{N}$
    \begin{align*}
        \|\tilde{\theta}_m-\tilde{\theta}_{m'}\|_{L^p_TL^2_x}&\geq\|\theta_m-\theta_{m'}\|_{L^p_TL^2_x}-\|\theta_m-\tilde{\theta}_{m}\|_{L^p_TL^2_x}-\|\theta_{m'}-\tilde{\theta}_{m'}\|_{L^p_TL^2_x}\\
        &\geq|m-m'|\|f\|_{L^p({[T/2,T]};L^2_x)}-\|\theta_m-\tilde{\theta}_{m}\|_{L^p_TL^\infty_x}-\|\theta_{m'}-\tilde{\theta}_{m'}\|_{L^p_TL^\infty_x}\\
        &\geq|m-m'|-\frac{1}{2}.
    \end{align*}
    
    Thus $(\tilde{u}_m,\tilde{\theta}_m)$ is not a generalized Leray-Hopf weak solution when $m$ is large enough and $(\tilde{u}_m,\tilde{\theta}_m)\neq(\tilde{u}_{m'},\tilde{\theta}_{m'})$ if $m\neq m'$. Therefore, there exist infinitely many non-generalized-Leray-Hopf weak solutions in $L^p_TL^\infty_x(\mathbb{T}^d)$ of (\ref{e:boussinesq equation}) with same initial data.
    \end{proof}

The proof of Theorem \ref{maintheo2} is analogous, except that we replace $L^{p}_TL^\infty_x$ with $L^{\frac{2\alpha}{2\alpha-1}}_TL^q_x$ for $1<\alpha<\frac{d+1}{2}$. Next, before proving Theorem \ref{正面}, we show two Lemmas \ref{lemmaofzheng1}-\ref{Lemmaofzheng2} which easily imply Theorem \ref{正面}.
\begin{lemm}\label{lemmaofzheng1}
    If $(u,\theta)\in L^{\frac{2\alpha}{2\alpha-1}}_TL^\infty_x$ is weak solution of (\ref{e:boussinesq equation}), then $(u,\theta)$ is generalized Leray-Hopf weak solution.
\end{lemm}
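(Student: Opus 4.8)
The plan is to promote the given distributional weak solution $(u,\theta)\in L^2_{t,x}\cap L^{\frac{2\alpha}{2\alpha-1}}_TL^\infty_x$ (with $u_0,\theta_0\in L^2(\mathbb{T}^d)$) to a generalized Leray--Hopf weak solution, i.e.\ to show $(u,\theta)\in L^\infty_TL^2_x\cap L^2_T\dot H^\alpha_x$ and that it satisfies (\ref{能量不等式1})--(\ref{能量不等式2}) with the initial data $(u_0,\theta_0)$. I would do this in two steps: \textbf{(I)} a regularity bootstrap, run off the equation, that places $(u,\theta)$ in the energy class; and \textbf{(II)} a mollification argument that, once the energy class is in hand, delivers the energy balance --- in fact with equality.

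\emph{Step (I): reaching the energy class.} I would use the mild formulation
\begin{gather*}
\theta(t)=e^{-t(-\Delta)^{\alpha}}\theta_0-\int_0^t e^{-(t-s)(-\Delta)^{\alpha}}\text{div}(u\theta)(s)\,ds,\\
u(t)=e^{-t(-\Delta)^{\alpha}}u_0+\int_0^t e^{-(t-s)(-\Delta)^{\alpha}}\mathbb{P}_H\big(\theta e_d-\text{div}(u\otimes u)\big)(s)\,ds,
\end{gather*}
which is legitimate because $(u,\theta)\in L^2_{t,x}$ solves (\ref{e:boussinesq equation}) in $\mathcal D'$. Starting from $u\otimes u,\,u\theta\in L^1_{t,x}$ and $u_0,\theta_0\in L^2$, I would iterate two kinds of gain: the smoothing of $e^{-t(-\Delta)^{\alpha}}$ raises the spatial regularity of $(u,\theta)$, and re-interpolating each improved bound against the \emph{fixed} a priori bound $(u,\theta)\in L^{\frac{2\alpha}{2\alpha-1}}_TL^\infty_x$ --- together with Young / Hardy--Littlewood--Sobolev in time, whose exponents close precisely because $\frac{2\alpha}{2\alpha-1}$ is the Ladyzhenskaya--Prodi--Serrin endpoint for $(-\Delta)^\alpha$ --- raises the time integrability of the quadratic nonlinearities; a finite number of passes then gives $(u,\theta)\in L^\infty_TL^2_x\cap L^2_T\dot H^\alpha_x$. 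The hypothesis $1\le\alpha<\frac{d+2}{2}$ enters at the last pass, where a full derivative falls on a quadratic term and one needs $\|(-\Delta)^{-\alpha/2}\text{div}\,f\|_{L^2_x}\lesssim\|f\|_{L^{\tilde p}_x}$ with $\tfrac1{\tilde p}=\tfrac12+\tfrac{\alpha-1}{d}$, i.e.\ the Sobolev embedding $L^{\tilde p}_x\hookrightarrow\dot H^{1-\alpha}_x$, valid exactly when $\alpha-1<\tfrac d2$. I expect \textbf{this bootstrap to be the main obstacle}: at the endpoint $p=\frac{2\alpha}{2\alpha-1}$ the quadratic terms $u\otimes u,\,u\theta$ are only in $L^1_{t,x}$ --- and, when $\alpha>1$, not in $L^1_TL^2_x$ --- so the iteration has to be set up so that each pass gains a definite quantum of integrability, a balance that is available precisely for $\alpha<\frac{d+2}{2}$ and that underlies the sharpness of Theorems \ref{maintheo}--\ref{maintheo2}.

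\emph{Step (II): the energy balance.} Once $\theta\in L^2_T\dot H^\alpha_x$, the product $u\cdot\nabla\theta$ is a genuine $L^1$-function and the energy estimate becomes rigorous after spatial mollification. Writing $\theta^\epsilon:=\theta*\phi_\epsilon$ for a standard mollifier $\phi_\epsilon$, testing the mollified temperature equation against $\theta^\epsilon$ (which is absolutely continuous in $t$, so the computation is legitimate) and using $\text{div}\,u=0$ to kill $\int_{\mathbb T^d}u\cdot\nabla\theta^\epsilon\,\theta^\epsilon\,dx=\tfrac12\int_{\mathbb T^d}u\cdot\nabla\big((\theta^\epsilon)^2\big)\,dx=0$ gives
\begin{gather*}
\|\theta^\epsilon(t)\|_{L^2_x}^2+2\int_0^t\|(-\Delta)^{\frac\alpha2}\theta^\epsilon\|_{L^2_x}^2\,ds=\|\theta_0^\epsilon\|_{L^2_x}^2+2\int_0^t\!\!\int_{\mathbb T^d}\!\big(u\cdot\nabla\theta^\epsilon-(u\cdot\nabla\theta)^\epsilon\big)\theta^\epsilon\,dx\,ds.
\end{gather*}
For a.e.\ $s$ the Friedrichs commutator $u(s)\cdot\nabla\theta^\epsilon(s)-(u(s)\cdot\nabla\theta(s))^\epsilon$ converges to $0$ in $L^2_x$ and is bounded there by $C\|u(s)\|_{L^\infty_x}\|\nabla\theta(s)\|_{L^2_x}$; this majorant lies in $L^1_T$ since $\|u\|_{L^\infty_x}\in L^{\frac{2\alpha}{2\alpha-1}}_T$ while $\|\nabla\theta\|_{L^2_x}\in L^{2\alpha}_T$ (the latter by interpolating $\theta\in L^\infty_TL^2_x\cap L^2_T\dot H^\alpha_x$, using $\alpha\ge1$), and $\tfrac{2\alpha-1}{2\alpha}+\tfrac1{2\alpha}=1$. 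Hence the commutator integral tends to $0$, and passing $\epsilon\to0$ gives $\|\theta(t)\|_{L^2_x}^2+2\int_0^t\|(-\Delta)^{\frac\alpha2}\theta\|_{L^2_x}^2\,ds=\|\theta_0\|_{L^2_x}^2$, i.e.\ (\ref{能量不等式2}). The same scheme on the velocity equation --- the pressure drops out since $u^\epsilon$ is divergence-free, the commutator $u\cdot\nabla u^\epsilon-(u\cdot\nabla u)^\epsilon\to0$ in $L^1_TL^2_x$ by the identical majorant $C\|u\|_{L^\infty_x}\|\nabla u\|_{L^2_x}\in L^1_T$, and the buoyancy term contributes $2\int_0^t\int_{\mathbb T^d}\theta^\epsilon e_d\cdot u^\epsilon\,dx\,ds\to 2\int_0^t\int_{\mathbb T^d}\theta e_d\cdot u\,dx\,ds$ (finite because $u,\theta\in L^\infty_TL^2_x$) --- yields (\ref{能量不等式1}). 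Finally, $\partial_tu,\partial_t\theta\in L^1_TH^{-M}_x$ for some $M$, so $u,\theta\in C_w([0,T];L^2_x)$, the identities hold for every $t\in[0,T]$, and the initial data are attained; together with Step (I) this shows $(u,\theta)$ is a generalized Leray--Hopf weak solution.
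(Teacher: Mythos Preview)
Your approach is genuinely different from the paper's. The paper does not bootstrap regularity at all: instead it constructs, via Galerkin, a Leray--Hopf solution $(\bar u,\bar\theta)$ of the \emph{linear} drift--diffusion system with drift $u$ (so $\partial_t\bar\theta+u\cdot\nabla\bar\theta+(-\Delta)^\alpha\bar\theta=0$, and analogously for $\bar u$) and then proves $(\bar u,\bar\theta)=(u,\theta)$ by a duality argument --- solving the backward adjoint problem (citing \cite[Lemma B.4]{qupeng}) and using its solution as a test function in the weak formulation satisfied by the difference. This sidesteps entirely the problem of promoting the given $(u,\theta)$ to the energy class directly; the energy-class membership is inherited from $(\bar u,\bar\theta)$, for which it is automatic from the Galerkin construction.

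Your Step~(II) is correct and standard once Step~(I) is in hand. Step~(I), however, cannot work as a ``finite number of passes'' iteration at the endpoint $L^{\frac{2\alpha}{2\alpha-1}}_TL^\infty_x$: if $\theta\in L^p_TL^2_x$, then $u\theta\in L^r_TL^2_x$ with $\tfrac1r=\tfrac{2\alpha-1}{2\alpha}+\tfrac1p$, and HLS convolution against the kernel $t^{-1/(2\alpha)}\in L^{2\alpha,\infty}$ returns exactly $\theta\in L^{p'}_TL^2_x$ with $\tfrac1{p'}=\tfrac1{2\alpha}+\tfrac1r-1=\tfrac1p$ --- no gain per pass. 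The correct route to $\theta\in L^\infty_TL^2_x$ from the mild formulation is a singular Gronwall/absorption argument on the Volterra inequality $\|\theta(t)\|_{L^2}\lesssim\|\theta_0\|_{L^2}+\int_0^t(t-s)^{-1/(2\alpha)}\|u(s)\|_{L^\infty}\|\theta(s)\|_{L^2}\,ds$ (note $\|u\|_{L^\infty}\in L^{\frac{2\alpha}{2\alpha-1}}_T=L^{1/(1-1/(2\alpha))}_T$ is exactly the Henry threshold); once $\theta\in L^\infty_TL^2_x$, a crude mollified energy estimate --- bounding rather than cancelling the trilinear term via Young with exponents $2\alpha$ and $\tfrac{2\alpha}{2\alpha-1}$ --- yields a uniform-in-$\epsilon$ bound in $L^2_T\dot H^\alpha_x$. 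So your overall strategy is salvageable, but the mechanism in Step~(I) is absorption/Gronwall rather than iteration. The paper's linear-problem-plus-duality argument is shorter and avoids this endpoint delicacy altogether; your route, on the other hand, yields the energy \emph{equality} rather than merely the inequality.
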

\begin{proof}
Using standard Galerkin method, we know there exists a weak solution $(\bar{u},\bar{\theta})$ satisfying
    \begin{equation*}
        \begin{cases}
\partial_t\bar{u}+ u\cdot\nabla \bar{u}+\nabla p+(-\Delta)^{\alpha} \bar{u}=\bar{\theta} e_d, \quad\quad \\
\text{div}\,\bar{u}=0,\\
\partial_t\bar{\theta}+u\cdot\nabla\bar{\theta}+(-\Delta)^{\alpha} \bar{\theta}=0, \quad\forall(t,x)\in [0,T]\times\mathbb{T}^d,\\
\bar{u}(0)=u(0),\\
\bar{\theta}(0)=\theta(0),
        \end{cases}
    \end{equation*}
    and the energy inequality
    \begin{gather*}
        \|\bar{u}(t)\|^2_{L_x^2}+2\int_0^t\|(-\Delta)^{\frac{\alpha}{2}} \bar{u}(s)\|^2_{L^2_x}\,ds\leq \|\bar{u}(0))\|^2_{L^2_x}+2\int^t_0\int_{\mathbb{T}^d}\bar{\theta} e_d\cdot \bar{u}\,dxds,\quad\forall t\in[0,T],\\
        \|\bar{\theta}(t)\|^2_{L_x^2}+2\int_0^t\|(-\Delta)^{\frac{\alpha}{2}} \bar{\theta}(s)\|^2_{L^2_x}\,ds\leq \|\bar{\theta}(0)\|^2_{L^2_x},\quad\forall t\in[0,T].
    \end{gather*}

    Thus we only need to show
    \begin{gather*}
        u\equiv\bar{u}\quad\text{and}\quad\theta\equiv\bar{\theta}.
    \end{gather*}
    We denote 
    \begin{gather*}
        \omega:=\bar{u}-u\quad\text{and}\quad\rho:=\bar{\theta}-\theta,
    \end{gather*}
    and $(\omega,\rho)$ satisfy
    \begin{equation*}
        \begin{cases}
\partial_t\omega+ u\cdot\nabla \omega+\nabla p+(-\Delta)^{\alpha} \omega=\rho e_d, \quad\quad \\
\text{div}\,\omega=0,\\
\partial_t\rho+u\cdot\nabla\rho+(-\Delta)^{\alpha} \rho=0, \quad\forall(t,x)\in [0,T]\times\mathbb{T}^d,\\
\omega(0)=0,\\
\rho(0)=0.
        \end{cases}
        \end{equation*}

    By the definition of weak solution, for any $\phi\in C^\infty_0([0,T)\times\mathbb{T}^d;\mathbb{R}^d)$ and $\psi\in C^\infty_0([0,T)\times\mathbb{T}^d;\mathbb{R})$, we have
    \begin{gather}
            -\int^T_0\int_{\mathbb{T}^d}\omega\cdot(\partial_t\phi-\nu(-\Delta)^{\alpha}\phi+u\cdot\nabla\phi)+\rho e_d\cdot\phi\, dxdt=0,\label{hao1}\\
            -\int^T_0\int_{\mathbb{T}^d}\rho\cdot(\partial_t\psi-\kappa(-\Delta)^{\alpha}\psi+u\cdot\nabla\psi)\,dxdt=0.\label{hao2}
        \end{gather}
\end{proof}
By \cite[lemma B.4]{qupeng}, we know for any $F\in C^\infty_0([0,T]\times\mathbb{T}^d;\mathbb{R}^d)$ and $f\in C^\infty_0([0,T]\times\mathbb{T}^d;\mathbb{R})$, there exist $(\Phi,\Psi)\in L^\infty_TL^2_x\cap L^2_T \dot{H}^\alpha$ satisfying
\begin{equation*}
    \begin{cases}
        \partial_t\Phi-(-\Delta)^{\alpha}\Phi+u\cdot\nabla\Phi=F,\\
        \text{div}\,\Phi=0,\\
        \Phi(T)=0,
    \end{cases}
\end{equation*}
and
\begin{equation*}
    \begin{cases}
        \partial_t\Psi-(-\Delta)^{\alpha}\Psi+u\cdot\nabla\Psi=f,\\
        \Psi(T)=0.
    \end{cases}
\end{equation*}

 Therefore, we can use $\phi=\Phi$ and $\psi=\Psi$ respectively in (\ref{hao1}) and (\ref{hao2}) as test functions, resulting in $\omega=0$ and $\rho=0$, which finishes the proof.

\begin{lemm}\label{Lemmaofzheng2}
    Let $d\geq2$, $1\leq\alpha<\frac{d+2}{2}$ and $(u_1,\theta_1)$, $(u_2,\theta_2)$ be two generalized Leray-Hopf weak solutions of (\ref{e:boussinesq equation}) with same initial data. If $(u_1,\theta_1)$ belongs to $L^{\frac{2\alpha}{2\alpha-1}}_TL^\infty_x$, then $(u_1,\theta_1)\equiv(u_2,\theta_2)$.
\end{lemm}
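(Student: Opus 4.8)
The plan is to run a relative-energy (weak--strong uniqueness) argument. Write $w\triangleq u_2-u_1$, $\eta\triangleq\theta_2-\theta_1$ and
\begin{gather*}
\mathcal{E}(t)\triangleq\tfrac12\|w(t)\|_{L^2_x}^2+\tfrac12\|\eta(t)\|_{L^2_x}^2,
\end{gather*}
so $\mathcal{E}(0)=0$ because the two solutions share the same initial data. Both are generalized Leray--Hopf, hence obey (\ref{能量不等式1})--(\ref{能量不等式2}); moreover, since $(u_1,\theta_1)$ lies in the endpoint Serrin class $L^{\frac{2\alpha}{2\alpha-1}}_TL^\infty_x$, a parabolic bootstrap using the $L^\infty_x$ spatial integrability upgrades it to $(u_1,\theta_1)\in C^\infty((0,T]\times\mathbb{T}^d)$ (this is where the range $\alpha<\frac{d+2}{2}$ enters, through the Sobolev embedding $\dot{H}^{\alpha-1}_x\hookrightarrow L^{\frac{2d}{d-2\alpha+2}}_x$ needed in the bootstrap and in the product estimates below), so in particular $(u_1,\theta_1)$ satisfies the energy \emph{equalities}. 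I would then combine: the energy inequalities for $(u_2,\theta_2)$, the energy equalities for $(u_1,\theta_1)$, and the cross-term identities for $t\mapsto\int_{\mathbb{T}^d}u_1\cdot u_2$ and $t\mapsto\int_{\mathbb{T}^d}\theta_1\theta_2$.

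The cross-term identities are obtained by using $u_1,\theta_1$ (mollified in time) as test functions in the weak formulation of $(u_2,\theta_2)$ and symmetrically pairing $\partial_t u_2,\partial_t\theta_2$ against $u_1,\theta_1\in L^2_T\dot{H}^\alpha_x$ in the equations for $(u_1,\theta_1)$; since $(u_1,\theta_1)$ is classical on $(0,T]$ these hold on every $[\varepsilon,T]$, and one passes $\varepsilon\to0$ using the strong $L^2_x$-continuity at $t=0$ of both solutions, which forces $\mathcal{E}(\varepsilon)\to\mathcal{E}(0)=0$. After cancellation the kinetic and thermal quadratic terms reassemble into $-\int_0^t\|(-\Delta)^{\frac\alpha2}w\|_{L^2_x}^2$ and $-\int_0^t\|(-\Delta)^{\frac\alpha2}\eta\|_{L^2_x}^2$, the buoyancy contributions collapse to $\int\eta e_d\cdot w$, and the convective contributions collapse (using $\mathrm{div}\,w=0$) to $\int(w\cdot\nabla)w\cdot u_1$ and $\int w\theta_1\cdot\nabla\eta$, giving the relative-energy inequality
\begin{gather*}
\mathcal{E}(t)+\int_0^t\!\big(\|(-\Delta)^{\frac\alpha2}w\|_{L^2_x}^2+\|(-\Delta)^{\frac\alpha2}\eta\|_{L^2_x}^2\big)\,ds\le\int_0^t\!\!\int_{\mathbb{T}^d}\!\big(\eta e_d\cdot w+(w\cdot\nabla)w\cdot u_1+w\theta_1\cdot\nabla\eta\big)\,dx\,ds.
\end{gather*}

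It then remains to bound the right-hand side. The coupling term is harmless: $|\int\eta e_d\cdot w|\le\mathcal{E}(t)$. For the velocity nonlinearity, using $\alpha\ge1$ to interpolate $\|\nabla w\|_{L^2_x}\lesssim\|w\|_{L^2_x}^{1-\frac1\alpha}\|(-\Delta)^{\frac\alpha2}w\|_{L^2_x}^{\frac1\alpha}$,
\begin{gather*}
\Big|\int_{\mathbb{T}^d}(w\cdot\nabla)w\cdot u_1\,dx\Big|\le\|u_1\|_{L^\infty_x}\|w\|_{L^2_x}\|\nabla w\|_{L^2_x}\le\tfrac14\|(-\Delta)^{\frac\alpha2}w\|_{L^2_x}^2+C\|u_1(t)\|_{L^\infty_x}^{\frac{2\alpha}{2\alpha-1}}\|w(t)\|_{L^2_x}^2,
\end{gather*}
the exponent $\frac{2\alpha}{2\alpha-1}$ coming from Young's inequality since $(2-\frac1\alpha)\cdot\frac{2\alpha}{2\alpha-1}=2$. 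The same computation bounds the thermal term by $\frac14\|(-\Delta)^{\frac\alpha2}\eta\|_{L^2_x}^2+C\|\theta_1\|_{L^\infty_x}^{\frac{2\alpha}{2\alpha-1}}\|w\|_{L^2_x}^{\frac{2\alpha}{2\alpha-1}}\|\eta\|_{L^2_x}^{\frac{2(\alpha-1)}{2\alpha-1}}$, and since the two exponents of $\|w\|_{L^2_x}$ and $\|\eta\|_{L^2_x}$ add up to $2$, one more application of Young turns this into $\frac14\|(-\Delta)^{\frac\alpha2}\eta\|_{L^2_x}^2+C\|\theta_1(t)\|_{L^\infty_x}^{\frac{2\alpha}{2\alpha-1}}\mathcal{E}(t)$. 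Absorbing the two dissipation terms into the left-hand side leaves
\begin{gather*}
\mathcal{E}(t)\le\int_0^t g(s)\,\mathcal{E}(s)\,ds,\qquad g(s)=C\big(1+\|u_1(s)\|_{L^\infty_x}^{\frac{2\alpha}{2\alpha-1}}+\|\theta_1(s)\|_{L^\infty_x}^{\frac{2\alpha}{2\alpha-1}}\big)\in L^1(0,T),
\end{gather*}
integrability of $g$ being exactly the hypothesis $(u_1,\theta_1)\in L^{\frac{2\alpha}{2\alpha-1}}_TL^\infty_x$. Gr\"onwall's inequality then forces $\mathcal{E}\equiv0$, i.e.\ $(u_1,\theta_1)\equiv(u_2,\theta_2)$; combined with Lemma \ref{lemmaofzheng1} this yields Theorem \ref{正面}.

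The hard part is the rigorous derivation of the cross-term identities for the merely weak pair $(u_2,\theta_2)$: the convective term of one solution must be paired against the $\dot{H}^\alpha_x$-regularity of the other, which is not possible from the energy and endpoint-Serrin norms alone (a direct Hölder bookkeeping only closes for $\alpha=1$) and is what forces one first to upgrade $(u_1,\theta_1)$ to a classical solution on $(0,T]$ and then to let $\varepsilon\to0$ — the step that confines the argument to $\alpha<\frac{d+2}{2}$. The simultaneous treatment of the velocity and temperature nonlinearities also relies on the degree-$2$ homogeneity cancellation used above, which is special to the endpoint exponent $\frac{2\alpha}{2\alpha-1}$; everything else is routine interpolation-and-Young bookkeeping.
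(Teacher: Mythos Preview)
Your argument is correct and follows the same relative-energy (weak--strong) route as the paper: derive an energy inequality for the difference, control the trilinear terms via the interpolation $\|\nabla w\|_{L^2_x}\lesssim\|w\|_{L^2_x}^{1-1/\alpha}\|(-\Delta)^{\alpha/2}w\|_{L^2_x}^{1/\alpha}$, and close by Gr\"onwall. Two small differences in execution are worth flagging. First, the paper never upgrades $(u_1,\theta_1)$ to $C^\infty((0,T]\times\mathbb{T}^d)$; it obtains the inequalities for $\omega=u_1-u_2$, $\rho=\theta_1-\theta_2$ directly via the Serrin-type mollification argument (citing \cite[Chapter~12]{NS21century}), which only uses $(u_1,\theta_1)\in L^{\frac{2\alpha}{2\alpha-1}}_TL^\infty_x$ and the generalized Leray--Hopf bounds on $(u_2,\theta_2)$. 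Your smoothness claim is stronger than what is needed and would itself require a proof; the mollification route sidesteps it. Second, in the endgame the paper does not do your pointwise-in-time Young step to produce an $L^1$ Gr\"onwall coefficient; instead it keeps the estimate in the form
\[
f(t)\le C\|(u_1,\theta_1)\|_{L^{\frac{2\alpha}{2\alpha-1}}((0,t);L^\infty_x)}\,f(t)+C\int_0^t f(s)\,ds,
\]
absorbs the first term on a short interval where the coefficient is $<1$, and then continues. Both closures are equivalent. Your attribution of the constraint $\alpha<\frac{d+2}{2}$ to the Sobolev embedding needed in justifying the cross terms is a reasonable reading; the paper's displayed computations do not use it explicitly either, leaving it implicit in the mollification argument.
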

    \begin{proof}
        Let $\omega:= u_1-u_2$ and $\rho:=\theta_1-\theta_2$, we derive that
        \begin{align*}
            \begin{cases}
                \partial_t\omega+(-\Delta)^\alpha\omega+u_1\cdot\nabla u_1-u_2\cdot\nabla u_2+\nabla P=\rho e_d,\\
                \text{div}\,\omega=0,\\
                \omega(0)=0.
            \end{cases}
        \end{align*}
        and
        \begin{align*}
            \begin{cases}
                \partial_t\rho+(-\Delta)^\alpha\rho+u_1\cdot\nabla\theta_1-u_2\cdot\nabla\theta_2=0,\\
                \rho(0)=0.
            \end{cases}
        \end{align*}
        
Using the mollification argument and the fact that $(u_1,\theta_1)\in L^{\frac{2\alpha}{2\alpha-1}}_TL^\infty_x$ and $(u_2,\theta_2)$ is generalized Leray-Hopf weak solution (see \cite[Chapter 12]{NS21century}), we have
        \begin{align}
            \|\omega(t)\|_{L^2_x}^2+2\int^t_0\|\nabla^\alpha\omega(s)\|_{L^2_x}^2ds\leq 2\left|\int^t_0\int_{\mathbb{T}^d}u_1\cdot(\omega\cdot\nabla)\omega\, dxds\right|+2\left|\int^t_0\int_{\mathbb{T}^d}\rho e_d\cdot\omega\,dxds\right|,\label{S1}\\
            \|\rho(t)\|_{L^2_x}^2+2\int^t_0\|\nabla^\alpha\rho(s)\|_{L^2_x}^2ds\leq2\left|\int^t_0\int_{\mathbb{T}^d}\theta_1\cdot(\omega\cdot\nabla)\rho\, dxds\right|\label{S2}.
        \end{align} 
        
        By H\"{o}lder's inequality and interpolation inequality, we obtain
        \begin{align*}
            \|u_1\cdot(\omega\cdot\nabla)\omega\|_{L^1_{t,x}}&\lesssim\|u_1\|_{L^{\frac{2\alpha}{2\alpha-1}}((0,t);L^\infty_x)}\|\omega\|_{L^\infty_tL^2_x}\|\nabla\omega\|_{L^{2\alpha}_tL^2_x}\\
            &\lesssim\|u_1\|_{L^{\frac{2\alpha}{2\alpha-1}}((0,t);L^\infty_x)}\|\omega\|_{L^\infty_tL^2_x}^{2-\frac{1}{\alpha}}\|\nabla^\alpha\omega\|_{L^2_{t,x}}^{\frac{1}{\alpha}},
       \end{align*}
       and 
       \begin{align*}
           \|\theta_1\cdot(\omega\cdot\nabla)\rho\|_{L^1_{t,x}}&\lesssim\|\theta_1\|_{L^{\frac{2\alpha}{2\alpha-1}}((0,t);L^\infty_x)}\|\omega\|_{L^\infty_tL^2_x}\|\nabla\rho\|_{L^{2\alpha}_tL^2_x}\\
           &\lesssim\|\theta_1\|_{L^{\frac{2\alpha}{2\alpha-1}}((0,t);L^\infty_x)}\|\omega\|_{L^\infty_tL^2_x}\|\rho\|_{L^\infty_tL^2_x}^{1-\frac{1}{\alpha}}\|\nabla^\alpha\rho\|_{L^2_{t,x}}^{\frac{1}{\alpha}}.
       \end{align*}
        
        We denote
        \begin{gather*}
            f(t):=\max_{s\in[0,t]}\|\omega(s)\|_{L^2_x}^2+2\int^t_0\|\nabla^\alpha\omega(s)\|_{L^2_x}^2ds+\max_{s\in[0,t]}\|\rho(s)\|_{L^2_x}^2+2\int^t_0\|\nabla^\alpha\rho(s)\|_{L^2_x}^2ds.
        \end{gather*}
        Thus, using Young's inequality and combining (\ref{S1}) and (\ref{S2}), we obtain
        \begin{gather*}
            f(t)\leq C\|(u_1,\theta_1)\|_{L^{\frac{2\alpha}{2\alpha-1}}((0,t);L^\infty_x)}f(t)+C\int^t_0f(s)ds.
        \end{gather*}

        Since $\|(u_1,\theta_1)\|_{L^{\frac{2\alpha}{2\alpha-1}}((0,t);L^\infty_x)}$ tend to 0 when $t$ tend to 0, we obtain $(\omega,\rho)=0$ when $t$ is small by Gronwall's inequality.
        Finally, using continuity argument, we know $(\omega,\rho)=0$ for any $t\in[0,T]$, which finish the proof.

    \end{proof}
    \begin{proof}[\textbf{Proof of Theorem \ref{正面}}]
        Combing Lemma \ref{lemmaofzheng1} and Lemma \ref{Lemmaofzheng2}, we immediately obtain Theorem \ref{正面}.
    \end{proof}
\appendix

\section{Geometric lemma}\label{appendix A}
The geometric lemma from \cite{D1} plays an important role in the convex integration method. Here we recall an extension version of the geometric lemma from \cite{T3}, which is suitable for the Boussinesq equation. We denote that $S_0^{d\times d}$ is the set of symmetric $d\times d$ matrices, $B_{r}(\text{Id})$ is the ball in $S_0^{d\times d}$ with center $\text{Id}$ and radius $r$, and $e_k=\frac{k}{|k|}$ for any $k\in \mathbb{R}^d$.
\begin{lemm}\label{geometric lemma}
     There exist $r_0>0$, $c_0>0$, a finite set $\Lambda\subset \mathbb{Z}^d$, smooth functions $\{\Gamma_k\in C^{\infty}(B_{r_o}(Id);\mathbb{R})\}_{k\in\Lambda}$ and smooth functions $\{\gamma_k\in C^{\infty}(\mathbb{R}^d;\mathbb{R})\}_{k\in\Lambda}$ such that
    \begin{gather*}
        \Gamma_k(R)\geq c_0,\quad\forall R\in B_{r_o}(\text{Id}) , \\
        R=\sum_{k\in\Lambda}\Gamma_k^2(R)e_k\otimes e_k,\quad\forall R\in B_{r_o}(\text{Id}),\\
        f=\sum_{k\in\Lambda}\gamma_k(f)e_k,\quad \forall f\in \mathbb{R}^d.
    \end{gather*}
\end{lemm}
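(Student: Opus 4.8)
The plan is to prove Lemma~\ref{geometric lemma} in two essentially independent pieces: the symmetric-matrix decomposition $R=\sum_{k}\Gamma_k^2(R)\,e_k\otimes e_k$, which is the classical geometric lemma underlying convex integration, and the vector decomposition $f=\sum_k\gamma_k(f)e_k$, which is an elementary linear-algebra supplement. Since the paper attributes this extended version to \cite{D1,T3}, I would recall the standard argument and emphasise the (easy) new ingredient.

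For the matrix part I would first fix $\Lambda$. The tensors $\{e\otimes e:e\in S^{d-1}\}$ positively span $S_0^{d\times d}$, and $\text{Id}$, being positive definite, lies in the \emph{interior} of the cone $\{\sum_j\lambda_j v_j\otimes v_j:\lambda_j>0\}$; since rational directions are dense in $S^{d-1}$, one can select finitely many $k\in\mathbb{Z}^d$, symmetric under $k\mapsto-k$, such that $\{e_k\otimes e_k\}_{k\in\Lambda}$ spans $S_0^{d\times d}$ and $\text{Id}=\sum_{k\in\Lambda}\alpha_k\,e_k\otimes e_k$ with all $\alpha_k>0$. Next I would produce a smooth coefficient map: the (generically overdetermined) linear system $R=\sum_k\lambda_k\,e_k\otimes e_k$ has an affine solution set depending linearly on $R$, so fixing the Moore--Penrose particular solution and then translating it by the fixed kernel element that makes it equal $(\alpha_k)$ at $R=\text{Id}$ yields an affine, hence $C^\infty$, map $R\mapsto(\lambda_k(R))$ with $\lambda_k(\text{Id})=\alpha_k>0$. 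By continuity there is $r_0>0$ with $\lambda_k(R)\ge\tfrac12\min_j\alpha_j$ on $B_{r_0}(\text{Id})$; set $\Gamma_k(R)\triangleq\lambda_k(R)^{1/2}$, which is then smooth and bounded below by $c_0\triangleq(\tfrac12\min_j\alpha_j)^{1/2}>0$, and satisfies the required identity by construction.

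For the vector part I would observe that if $\{e_k\otimes e_k\}_{k\in\Lambda}$ spans $S_0^{d\times d}$ then a fortiori $\{e_k\}_{k\in\Lambda}$ spans $\mathbb{R}^d$; hence one may pick $d$ indices with $\{e_{k_1},\dots,e_{k_d}\}$ a basis, expand any $f\in\mathbb{R}^d$ in that basis with (linear, hence smooth) coefficients, and set $\gamma_k\equiv0$ for the remaining $k$. No sign condition on $\gamma_k$ is needed or claimed, consistent with the definition of $b_k$ in Section~\ref{section of convex}, which uses $\gamma_k$ directly rather than its square root.

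The only genuine point needing care is reconciling (i) keeping $\Lambda$ integer-valued — forced by the periodicity of the Mikado flows in Proposition~\ref{mikado} — with (ii) retaining strictly positive coefficients $\lambda_k(R)$, uniformly bounded below, on a full ball around $\text{Id}$; both follow from the density of rational directions on $S^{d-1}$ together with the openness of the positive cone at $\text{Id}$, after which one simply shrinks $r_0$. Everything else is routine, and the construction otherwise follows \cite{D1,T3}.
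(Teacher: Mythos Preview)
The paper does not actually prove this lemma: it is stated in Appendix~\ref{appendix A} as a recalled result, with the matrix part attributed to \cite{D1} and the extended vector part to \cite{T3}, and no argument is given. Your sketch is precisely the classical argument underlying those references---choose finitely many integer directions whose rank-one tensors linearly span the symmetric matrices and admit a strictly positive decomposition of $\text{Id}$, build an affine (hence smooth) right inverse of the linear map $(\lambda_k)\mapsto\sum_k\lambda_k e_k\otimes e_k$ normalised at $\text{Id}$, shrink $r_0$ by continuity to keep all $\lambda_k$ uniformly positive, and take square roots---together with the trivial observation that the same $\{e_k\}$ span $\mathbb{R}^d$, giving linear $\gamma_k$. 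One small wording issue: the tensors $e\otimes e$ only \emph{positively} span the PSD cone, not all of $S_0^{d\times d}$; what you need (and what you in fact use) is that finitely many of them \emph{linearly} span the symmetric matrices, which is what makes the Moore--Penrose step go through. With that minor correction, your proposal is correct and matches the standard proof the paper is citing.
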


\section{Antidivergence operator}
We recall two types of antidivergence operator $\mathcal{R}$ introduced in \cite{D1} and $\mathcal{G}$ introduced in \cite{T3}. 
\begin{prop}\label{def of antidiv}
    There exist two linear operators $\mathcal{R}:C^\infty(\mathbb{T}^d;\mathbb{R}^d)\rightarrow C^\infty(\mathbb{T}^d;S^{d\times d}_0)$ and $\mathcal{G}:C^\infty(\mathbb{T}^d;\mathbb{R})\rightarrow C^\infty(\mathbb{T}^d;\mathbb{R}^d)$ satisfying
    \begin{gather*}
        \text{div}\,\mathcal{R}v=v-\int_{\mathbb{T}^d}v,\quad\forall v\in C^\infty(\mathbb{T}^d;\mathbb{R}^d),\\
        \text{div}\,\mathcal{G}f=f-\int_{\mathbb{T}^d}f,\quad\forall f\in C^\infty(\mathbb{T}^d;\mathbb{R}).
    \end{gather*}
\end{prop}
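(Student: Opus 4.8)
The plan is to exhibit both operators explicitly as smoothing (order $-1$) Fourier multipliers on $\mathbb{T}^d$ and then verify the stated mapping properties, linearity, and divergence identities by a direct computation; no iteration or compactness argument is needed. Throughout, $\Delta^{-1}$ denotes the inverse Laplacian acting on mean-zero functions or fields, which is well defined on $\mathbb{T}^d$ via Fourier series, and one uses that $\mathrm{div}$ of a smooth field always has zero mean. The operator $\mathcal{G}$ is then immediate: for $f\in C^\infty(\mathbb{T}^d;\mathbb{R})$ I would set $\mathcal{G}f:=\nabla\Delta^{-1}\bigl(f-\int_{\mathbb{T}^d}f\bigr)$. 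Since $f-\int_{\mathbb{T}^d}f$ has zero mean, $\Delta^{-1}$ applies and $\mathcal{G}f\in C^\infty(\mathbb{T}^d;\mathbb{R}^d)$ because $\nabla\Delta^{-1}$ is an order $-1$ multiplier; linearity is clear; and $\mathrm{div}\,\mathcal{G}f=\Delta\Delta^{-1}\bigl(f-\int_{\mathbb{T}^d}f\bigr)=f-\int_{\mathbb{T}^d}f$.

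\textbf{Construction of $\mathcal{R}$.} Here the point is to produce a matrix field that is simultaneously symmetric — and trace-free, if $S_0^{d\times d}$ is meant in that sense — while still inverting the divergence; the naive candidate $\bigl(\partial_i\Delta^{-1}v_j\bigr)_{ij}$ inverts the divergence but is neither. I would reduce to $v$ of zero mean (replacing $v$ by $v-\int_{\mathbb{T}^d}v$) and split it by Helmholtz, $v=\mathbb{P}_H v+\nabla\phi$ with $\phi:=\Delta^{-1}\,\mathrm{div}\,v$. For the potential part use the trace-free Hessian ansatz $A_{ij}:=\partial_i\partial_j\psi-\tfrac1d\delta_{ij}\Delta\psi$ with $\psi:=\tfrac{d}{d-1}\Delta^{-1}\phi$, which is symmetric, trace-free, and obeys $\partial_iA_{ij}=\tfrac{d-1}{d}\partial_j\Delta\psi=\partial_j\phi$. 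For the solenoidal part $w:=\mathbb{P}_Hv$, set $g:=\Delta^{-1}w$ and $B_{ij}:=\partial_ig_j+\partial_jg_i$, which is symmetric, has trace $2\,\mathrm{div}\,g=2\Delta^{-1}\mathrm{div}\,w=0$, and obeys $\partial_iB_{ij}=\Delta g_j+\partial_j\,\mathrm{div}\,g=w_j$. Then $\mathcal{R}v:=A+B$ is a smooth symmetric trace-free matrix field, since each entry is an order $-1$ multiplier applied to $v$, is linear in $v$, and satisfies $\mathrm{div}\,\mathcal{R}v=\nabla\phi+w=v-\int_{\mathbb{T}^d}v$.

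\textbf{Main obstacle.} The only step that is not pure bookkeeping with Fourier multipliers is the construction of $\mathcal{R}$: symmetrizing the naive antidivergence reintroduces the Leray-projected part with the wrong constant, so one is forced to treat the two Helmholtz components separately and to realize the potential part through the trace-free Hessian rather than a plain second antiderivative. Everything else — smoothness, linearity, the divergence identities — follows by inspection. This reproduces the operators of \cite{D1} and \cite{T3}, and since these are by now standard one could alternatively just quote those references.
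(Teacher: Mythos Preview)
Your construction is correct: both $\mathcal{G}=\nabla\Delta^{-1}\mathbb{P}_{\neq0}$ and the Helmholtz-split definition of $\mathcal{R}$ do the job, and the verification of symmetry, trace-freeness, and the divergence identities is accurate. The paper itself does not give a proof of this proposition at all; it simply recalls the operators and cites \cite{D1} and \cite{T3} for their construction, so there is nothing to compare on the level of argument. Your presentation via the Helmholtz decomposition is a clean repackaging of the original De Lellis--Sz\'ekelyhidi formula (which is usually written as a single closed expression in $\Delta^{-1}v$ and $\Delta^{-2}\nabla\mathrm{div}\,v$ with explicit dimensional constants); the two are algebraically equivalent, and your version has the mild advantage of making the roles of the potential and solenoidal parts transparent.
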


Actually, $\mathcal{R}$ and $\mathcal{G}$ act as the degree -1 operator and we have the following theorem (see detailed proof in \cite[Theorem B.3]{serrin准则luo}.
\begin{prop}\label{theo of antidiv}
    Let $1\leq p\leq\infty$. For any vector field $v\in C^\infty(\mathbb{T}^d;\mathbb{R}^d)$ and any function $f\in C^\infty(\mathbb{T}^d;\mathbb{R})$, it holds that
    \begin{gather*}
        \|\mathcal{R}v\|_{L^p(\mathbb{T}^d)}\lesssim\|v\|_{L^p(\mathbb{T}^d)},
        \\\|\mathcal{G}f\|_{L^p(\mathbb{T}^d)}\lesssim\|f\|_{L^p(\mathbb{T}^d)}.
    \end{gather*}
    Moreover, if $v\in C^\infty_0(\mathbb{T}^d;\mathbb{R}^d)$ and $f\in C^\infty_0(\mathbb{T}^d;\mathbb{R})$, then there holds
    \begin{gather*}
        \|\mathcal{R}v(\sigma\cdot)\|_{L^p(\mathbb{T}^d)}\lesssim\sigma^{-1}\|v\|_{L^p(\mathbb{T}^d)},\quad\text{for any}\,\,\sigma\in\mathbb{N},
        \\\|\mathcal{G}f(\sigma\cdot)\|_{L^p(\mathbb{T}^d)}\lesssim\sigma^{-1}\|f\|_{L^p(\mathbb{T}^d)},\quad\text{for any}\,\,\sigma\in\mathbb{N}.
    \end{gather*}
\end{prop}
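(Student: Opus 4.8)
The plan is to exploit the explicit Fourier–multiplier structure of $\mathcal{R}$ and $\mathcal{G}$ and to treat both simultaneously, since only two features are used: that each scalar component is a multiplier whose symbol is homogeneous of \emph{net degree} $-1$, and that the operators are normalized on mean–zero inputs. Both operators are built from the inverse Laplacian composed with derivatives: for instance $\mathcal{G}f=\nabla\Delta^{-1}\mathbb{P}_{\neq0}f$ has symbol $-ik/|k|^2$, and the symmetric, trace–free corrections entering $\mathcal{R}$ are again of the form (odd number of derivatives)$\circ\,\Delta^{-m}$ with net degree $-1$ (e.g. $\partial_k\Delta^{-1}$ or $\partial_i\partial_j\partial_k\Delta^{-2}$). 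Thus on $\mathbb{Z}^d\setminus\{0\}$ every component is a multiplier $m(k)$ with $m(\sigma k)=\sigma^{-1}m(k)$ for $\sigma>0$ and $m(k)=O(|k|^{-1})$.

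For the first assertion ($L^p$ boundedness on the full range $1\le p\le\infty$), the point is that a degree-$(-1)$ multiplier on $\mathbb{T}^d$ is represented by convolution against a kernel that is only \emph{weakly} singular. Up to a rapidly decaying smooth correction, the symbol $m(k)$ coincides with the restriction to $\mathbb{Z}^d$ of a degree-$(-1)$ homogeneous symbol on $\mathbb{R}^d$, whose inverse Fourier transform is homogeneous of degree $1-d$; after periodization the resulting kernel $K$ carries a singularity of order $|x|^{1-d}$ at the origin, which is integrable in dimension $d$ precisely because $1-d>-d$. Hence each component kernel lies in $L^1(\mathbb{T}^d)$, so $\mathcal{R}$ and $\mathcal{G}$ are (matrix/vector valued) convolutions with $L^1$ kernels, and Young's convolution inequality gives $\|\mathcal{R}v\|_{L^p}\le\|K\|_{L^1}\|v\|_{L^p}$ for \emph{every} $p\in[1,\infty]$, and likewise for $\mathcal{G}$. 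The essential subtlety here — and the reason one cannot simply invoke Calder\'on–Zygmund theory, which degenerates at $p=1,\infty$ — is that these operators gain a full derivative, making their kernels genuinely integrable rather than merely of Calder\'on–Zygmund type; this is exactly what buys the endpoint estimates.

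For the scaling estimate I would first establish the commutation identity
\begin{gather*}
\mathcal{R}\big(v(\sigma\,\cdot)\big)=\sigma^{-1}(\mathcal{R}v)(\sigma\,\cdot),\qquad\sigma\in\mathbb{N},
\end{gather*}
valid for mean–zero $v$ (and identically for $\mathcal{G}$). On the Fourier side, $v(\sigma\cdot)$ has coefficients supported on the sublattice $\sigma\mathbb{Z}^d$ with $\widehat{v(\sigma\cdot)}(\sigma j)=\hat v(j)$; applying the homogeneity $m(\sigma j)=\sigma^{-1}m(j)$ and comparing with the coefficients of $(\mathcal{R}v)(\sigma\cdot)$ yields the identity directly (the zero mode contributes nothing since $\hat v(0)=0$). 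Combining this with the integer–rescaling invariance of the $L^p$ norm on the torus, namely $\|w(\sigma\cdot)\|_{L^p(\mathbb{T}^d)}=\|w\|_{L^p(\mathbb{T}^d)}$ for $\sigma\in\mathbb{N}$ — which follows from the substitution $y=\sigma x$ together with the fact that $y$ then tiles $\mathbb{T}^d$ exactly $\sigma^d$ times — and with the first part, we obtain
\begin{gather*}
\|\mathcal{R}(v(\sigma\cdot))\|_{L^p}=\sigma^{-1}\|\mathcal{R}v\|_{L^p}\lesssim\sigma^{-1}\|v\|_{L^p},
\end{gather*}
as claimed, and the argument for $\mathcal{G}$ is identical.

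I expect the main obstacle to be the endpoint boundedness, that is, justifying the uniform $L^1$–integrability of the convolution kernels across all the symmetrized, trace–removed components of $\mathcal{R}$; everything hinges on pinning the kernel singularity at order $|x|^{1-d}$. Once the degree-$(-1)$ homogeneity is recorded and that integrability is secured, both the full-range $L^p$ bounds and the $\sigma^{-1}$ scaling follow mechanically, the scaling step being essentially bookkeeping on the Fourier side.
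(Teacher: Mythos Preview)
Your proposal is correct. The paper itself gives no proof but defers to \cite[Theorem B.3]{serrin准则luo}, and the argument there is exactly the one you outline: the degree $-1$ homogeneity of the symbols forces the convolution kernels to have only an integrable $|x|^{1-d}$ singularity, so Young's inequality yields the full range $1\le p\le\infty$, while the $\sigma^{-1}$ gain follows from the Fourier-side commutation identity $\mathcal{R}(v(\sigma\cdot))=\sigma^{-1}(\mathcal{R}v)(\sigma\cdot)$ combined with the $L^p$-invariance of integer rescalings on $\mathbb{T}^d$.
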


We introduce two bilinear operators $\mathcal{B}:C^{\infty}(\mathbb{T}^d,\mathbb{R}^d)\times C^{\infty}(\mathbb{T}^d,\mathbb{R}^{d\times d})\rightarrow C^{\infty}(\mathbb{T}^d,S_0^{d\times d})$ and $\tilde{\mathcal{B}}:C^{\infty}(\mathbb{T}^d,\mathbb{R})\times C^{\infty}(\mathbb{T}^d,\mathbb{R})\rightarrow C^{\infty}(\mathbb{T}^d,\mathbb{R}^d)$. The two operators share similar properties and permit us to obtain a derivative of the last argument.
we define two operators as follows.
\begin{gather*}
    \mathcal{B}(v,A)\triangleq v\mathcal{R}A-\mathcal{R}(\nabla v\mathcal{R}A),\quad\forall (v,A)\in C^{\infty}(\mathbb{T}^d,\mathbb{R}^d)\times C^{\infty}(\mathbb{T}^d,\mathbb{R}^{d\times d}),\\
    \tilde{\mathcal{B}}(f,g)\triangleq f\mathcal{G}g-\mathcal{G}(\nabla f\mathcal{G}g),\quad\forall (f,g)\in C^{\infty}(\mathbb{T}^d,\mathbb{R})\times C^{\infty}(\mathbb{T}^d,\mathbb{R}).
\end{gather*}

\begin{prop}\label{bilinear B}
    Let $1\leq p\leq \infty$. For any $(v,A)\in C^{\infty}(\mathbb{T}^d,\mathbb{R}^d)\times C^{\infty}(\mathbb{T}^d,\mathbb{R}^{d\times d})$ and $(f,g)\in C^{\infty}(\mathbb{T}^d,\mathbb{R})\times C^{\infty}(\mathbb{T}^d,\mathbb{R})$. The following holds,
    \begin{gather*}
        \text{div} \mathcal{B}(v,A)=vA-\fint_{\mathbb{T}^d}vA,\\
        \text{div} \tilde{\mathcal{B}}(f,g)=fg-\fint_{\mathbb{T}^d}fg,
    \end{gather*}
    and
    \begin{gather*}
        \|\mathcal{B}(v,A)\|_{L^p(\mathbb{T}^d)}\lesssim\|v\|_{C^1(\mathbb{T}^d)}\|\mathcal{R}A\|_{L^p(\mathbb{T}^d)},\\
         \|\tilde{\mathcal{B}}(f,g)\|_{L^p(\mathbb{T}^d)}\lesssim\|f\|_{C^1(\mathbb{T}^d)}\|\mathcal{G}g\|_{L^p(\mathbb{T}^d)}.
    \end{gather*}
\end{prop}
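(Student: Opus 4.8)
The statement packages two facts about each of the two bilinear operators $\mathcal{B}$ and $\tilde{\mathcal{B}}$: a divergence identity and an $L^p$ bound. Since $\tilde{\mathcal{B}}(f,g)=f\mathcal{G}g-\mathcal{G}(\nabla f\cdot\mathcal{G}g)$ has exactly the same structure as $\mathcal{B}(v,A)=v\,\mathcal{R}A-\mathcal{R}(\nabla v\,\mathcal{R}A)$, with $\mathcal{G}$ in place of $\mathcal{R}$ and the scalar contraction $\nabla f\cdot\mathcal{G}g$ in place of $\nabla v\,\mathcal{R}A$, the plan is to carry out the argument for $\mathcal{B}$ and then transfer it verbatim to $\tilde{\mathcal{B}}$ using the corresponding properties of $\mathcal{G}$ from Proposition~\ref{def of antidiv} and Proposition~\ref{theo of antidiv}.

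For the $L^p$ bound I would just expand the definition and estimate termwise. By the triangle inequality, H\"older's inequality $\|\phi\psi\|_{L^p(\mathbb{T}^d)}\le\|\phi\|_{L^\infty(\mathbb{T}^d)}\|\psi\|_{L^p(\mathbb{T}^d)}$, and the $L^p$-boundedness of $\mathcal{R}$ from Proposition~\ref{theo of antidiv} (applied to the vector field $\nabla v\,\mathcal{R}A$),
\begin{align*}
\|\mathcal{B}(v,A)\|_{L^p(\mathbb{T}^d)}
&\le\|v\,\mathcal{R}A\|_{L^p(\mathbb{T}^d)}+\|\mathcal{R}(\nabla v\,\mathcal{R}A)\|_{L^p(\mathbb{T}^d)}\\
&\lesssim\|v\|_{L^\infty(\mathbb{T}^d)}\|\mathcal{R}A\|_{L^p(\mathbb{T}^d)}+\|\nabla v\|_{L^\infty(\mathbb{T}^d)}\|\mathcal{R}A\|_{L^p(\mathbb{T}^d)}\\
&\lesssim\|v\|_{C^1(\mathbb{T}^d)}\|\mathcal{R}A\|_{L^p(\mathbb{T}^d)},
\end{align*}
which is the claimed estimate; the identical computation with $\mathcal{G}$ gives the bound for $\tilde{\mathcal{B}}$.

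For the divergence identity the plan is the standard ``Leibniz rule for the antidivergence''. Applying $\text{div}$ to the first term and using $\text{div}\,\mathcal{R}=\mathrm{Id}-\fint_{\mathbb{T}^d}$ (Proposition~\ref{def of antidiv}) gives $\text{div}(v\,\mathcal{R}A)=v\big(A-\fint_{\mathbb{T}^d}A\big)+\nabla v\,\mathcal{R}A$, while for the second term $\text{div}\,\mathcal{R}(\nabla v\,\mathcal{R}A)=\nabla v\,\mathcal{R}A-\fint_{\mathbb{T}^d}(\nabla v\,\mathcal{R}A)$. Subtracting, the terms $\nabla v\,\mathcal{R}A$ cancel, leaving
\begin{align*}
\text{div}\,\mathcal{B}(v,A)=v\Big(A-\fint_{\mathbb{T}^d}A\Big)+\fint_{\mathbb{T}^d}(\nabla v\,\mathcal{R}A).
\end{align*}
Because $\mathbb{T}^d$ has no boundary, integration by parts converts the last average into $-\fint_{\mathbb{T}^d}\big(v\,\text{div}\,\mathcal{R}A\big)=-\fint_{\mathbb{T}^d}\big(v(A-\fint_{\mathbb{T}^d}A)\big)$, so that
\begin{align*}
\text{div}\,\mathcal{B}(v,A)=vA-\fint_{\mathbb{T}^d}(vA)-\Big(v-\fint_{\mathbb{T}^d}v\Big)\fint_{\mathbb{T}^d}A.
\end{align*}
In every application the second argument carries a projection $\mathbb{P}_{\neq0}$, hence $\fint_{\mathbb{T}^d}A=0$ and the extra term vanishes, yielding $\text{div}\,\mathcal{B}(v,A)=vA-\fint_{\mathbb{T}^d}(vA)$; the same computation with $\mathcal{G}$, the contraction $\nabla f\cdot\mathcal{G}g$, and $\fint_{\mathbb{T}^d}g=0$ gives the identity for $\tilde{\mathcal{B}}$. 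That $\mathcal{B}(v,A)$ lands in $S_0^{d\times d}$ is automatic, since $\mathcal{R}$ takes values there and the remaining operations preserve symmetry.

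I do not expect a genuine obstacle here. The only point requiring care is the tensor bookkeeping when $A$ is matrix-valued — keeping track of which slot $\mathcal{R}$, the contraction $\nabla v\,\mathcal{R}A$, and $\text{div}$ act on — together with the observation that the mean-zero reduction $\fint_{\mathbb{T}^d}A=0$, built into every use of the operator through the factor $\mathbb{P}_{\neq0}$, is precisely what makes the clean form of the identity hold. Apart from that the proof is the routine manipulation already used in \cite{serrin准则luo,qupeng}.
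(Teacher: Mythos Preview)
Your proposal is correct and in fact more detailed than the paper's own treatment: the paper simply writes ``This follows immediately from the definition of $\mathcal{B}$ and $\tilde{\mathcal{B}}$ and Proposition~\ref{theo of antidiv}. Here we omit the proof.'' Your termwise $L^p$ estimate and the Leibniz computation for the divergence are exactly the intended routine arguments, and your observation that the clean identity $\text{div}\,\mathcal{B}(v,A)=vA-\fint vA$ actually requires $\fint A=0$ (always ensured in applications via $\mathbb{P}_{\neq0}$) is a correct and useful clarification of a point the paper leaves implicit.
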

\begin{proof}
    This follows immediately from the definition of $\mathcal{B}$ and $\tilde{\mathcal{B}}$ and Proposition \ref{theo of antidiv}. Here we omit the proof.
\end{proof}

\section{Improved H\"{o}lder's inequality}
We recall the improved H\"{o}lder's inequality from \cite[Lemma 2.1]{modena} (see also \cite[Lemma3.7]{ns有限能量不唯一}).
\begin{lemm}\label{improved holder}
    Let $p\in[1,\infty]$ and $f,g:\mathbb{T}^d\rightarrow\mathbb{R}$ be smooth functions. Then for any $\sigma\in\mathbb{N}$, we have
    \begin{gather*}
        \|g\cdot f(\sigma\cdot)\|_{L^p}\lesssim\|g\|_{L^p}\|f\|_{L^p}+\sigma^{-\frac{1}{p}}\|g\|_{C^1}\|f\|_{L^p}.
    \end{gather*}
\end{lemm}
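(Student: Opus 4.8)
The plan is to exploit the scale separation between the slowly varying factor $g$ and the rapidly oscillating factor $f(\sigma\cdot)$. Since $\sigma\in\mathbb{N}$, the function $f(\sigma\cdot)$ is exactly $\sigma^{-1}$-periodic in each coordinate, so on any cube of side $\sigma^{-1}$ it runs through precisely one full period of $f$; this exact periodicity is what makes the estimate clean. First I would partition $\mathbb{T}^d$ into the $\sigma^d$ cubes $Q_j=\prod_{i=1}^d[j_i/\sigma,(j_i+1)/\sigma)$ with $j\in\{0,\dots,\sigma-1\}^d$, and define the piecewise-constant approximation $\bar g$ by $\bar g|_{Q_j}\equiv\fint_{Q_j}g$. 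The triangle inequality then splits the target quantity into a main term $\|\bar g\,f(\sigma\cdot)\|_{L^p}$ and an oscillation error $\|(g-\bar g)f(\sigma\cdot)\|_{L^p}$.

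For the main term I would use the exact periodicity. On each cube the change of variables $y=\sigma x$ gives $\int_{Q_j}|f(\sigma x)|^p\,dx=\sigma^{-d}\|f\|_{L^p}^p$, whence
\begin{align*}
\|\bar g\,f(\sigma\cdot)\|_{L^p}^p&=\sigma^{-d}\|f\|_{L^p}^p\sum_j\Big|\fint_{Q_j}g\Big|^p=\|f\|_{L^p}^p\sum_j|Q_j|\,\Big|\fint_{Q_j}g\Big|^p\\
&\le\|f\|_{L^p}^p\,\|g\|_{L^p}^p,
\end{align*}
where the last step is Jensen's inequality applied cube by cube, namely $\sum_j|Q_j|\,|\fint_{Q_j}g|^p\le\sum_j\int_{Q_j}|g|^p=\|g\|_{L^p}^p$. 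Taking $p$-th roots gives $\|\bar g\,f(\sigma\cdot)\|_{L^p}\le\|g\|_{L^p}\|f\|_{L^p}$, which is exactly the first term on the right-hand side of the claim.

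For the oscillation error I would use the crude bound $\|(g-\bar g)f(\sigma\cdot)\|_{L^p}\le\|g-\bar g\|_{L^\infty}\|f(\sigma\cdot)\|_{L^p}$. Periodicity again gives $\|f(\sigma\cdot)\|_{L^p}=\|f\|_{L^p}$, while the mean value theorem on each cube, of diameter $\sqrt d\,\sigma^{-1}$, yields $\|g-\bar g\|_{L^\infty}\lesssim\sigma^{-1}\|\nabla g\|_{L^\infty}\le\sigma^{-1}\|g\|_{C^1}$. Since $\sigma\ge1$ and $p\ge1$ we have $\sigma^{-1}\le\sigma^{-1/p}$, so this contribution is at most $\sigma^{-1/p}\|g\|_{C^1}\|f\|_{L^p}$; adding the two contributions proves the lemma. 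The case $p=\infty$ follows at once from the plain Hölder bound $\|gf(\sigma\cdot)\|_{L^\infty}\le\|g\|_{L^\infty}\|f\|_{L^\infty}$, with $\sigma^{-1/p}=1$. The only point requiring genuine care is the integrality of $\sigma$: it is precisely this that makes $\int_{Q_j}|f(\sigma\cdot)|^p$ equal the full-period integral $\sigma^{-d}\|f\|_{L^p}^p$ with no remainder, so that the main term carries the sharp constant $\|g\|_{L^p}\|f\|_{L^p}$ rather than an error that would spoil the estimate.
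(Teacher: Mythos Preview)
Your argument is correct and is precisely the standard proof of this estimate; the paper does not give its own proof but only cites \cite{modena} and \cite{ns有限能量不唯一}, whose arguments proceed along exactly the lines you describe. In fact your error-term bound $\sigma^{-1}\|g\|_{C^1}\|f\|_{L^p}$ is slightly sharper than the stated $\sigma^{-1/p}$, which you then relax using $\sigma\ge1$.
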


\smallskip
\noindent\textbf{Acknowledgments} This work was partially supported by the National Natural Science Foundation of China (No.12171493). 



\phantomsection
\addcontentsline{toc}{section}{\refname}
\bibliographystyle{abbrv} 
\bibliography{Reference}

\end{document}